\documentclass[twoside, 10pt]{article}
\usepackage{xcolor}
\usepackage{cite}
\usepackage{amsmath}
\usepackage{paralist}
\usepackage{amssymb,mathrsfs,psfrag,graphicx}
\usepackage{hyperref}
\usepackage{amsfonts}
\usepackage{indentfirst}
\usepackage{latexsym}
\newtheorem{theorem}{Theorem}[section]
\newtheorem{remark}{Remark}[section]
\newtheorem{definition}{Definition}[section]
\newtheorem{lemma}[theorem]{Lemma}

\newcommand{\mr}{\mathbb{R}}

\renewcommand{\div}{ {\rm div }  }

\newcommand{\na}{\nabla }

\newcommand{\bl}{\begin{lemma}}
	\newcommand{\el}{\end{lemma}}
\newcommand{\et}{\end{theorem}}

\newcommand{\la}{\label}

\newcommand{\Om}{\Omega}

\newcommand{\bn}{\begin{eqnarray}}
\newcommand{\en}{\end{eqnarray}}
\newcommand{\bnn}{\begin{eqnarray*}}
\newcommand{\enn}{\end{eqnarray*}}

\newcommand{\bnnn}{\begin{eqnarray*}}
\newcommand{\ennn}{\end{eqnarray*}}

\newcommand{\ba}{\begin{aligned}}
\newcommand{\ea}{\end{aligned}}
\newcommand{\be}{\begin{equation}}
\newcommand{\ee}{\end{equation}}

\def\norm[#1]#2{\|#2\|_{#1}}

\newcommand{\diver}{\operatorname{div}}

\newcommand{\average}[1]{\frac{1}{|#1|}\int_{#1}}

\def\la{\label}

\def\na{\nabla}

\makeatletter
\@addtoreset{equation}{section}
\makeatother
\title{ Global existence and uniqueness of Lions’ weak solutions  for the 2D inhomogeneous Navier-Stokes equations  in  bounded domains }
\author{Hongyu Wang$^a$\thanks{
	Email addresses: {hyuwang\_a@163.com} (H. Y. Wang).} \\[3mm]  a. School of Mathematics and computer Sciences,\\
Nanchang University, Nanchang,  330031, P. R. China.} 

\date{ }

\begin{document}
\maketitle
\begin{abstract}
		The global well-posedness   for Lions' weak solutions  to  the  inhomogeneous incompressible  Navier-Stokes  equations  in two-dimensional bounded domains  is concerned.  When the initial   
density $0\leq \rho_0\in L^\infty(\Om)$ (allowing for vacuum) and initial velocity   $u_0\in L^2_\sigma(\Omega)$,  the global existence, regularity, and exponential decay-in-time rates  of weak solutions are established via  a priori estimates, which rely crucially  on the duality of BMO and  Hardy space for the pressure term, and the classical Sobolev extension theorem. In particular, for the non-vacuous case  where $0< \rho_0\in L^\infty(\Om)$ is bounded away from zero,  we further prove the uniqueness of weak solutions in the sense of approximation by strong solutions,  based on the key estimate \(t^{1/2}\nabla u\in L^2(0,T;L^\infty(\Omega))\), which essentially follows from  a delicate localized point-wise bound for \(\nabla u\) near the boundary.  The latter bound is derived by the  Green functions estimate  for   the nonhomogeneous Dirichlet Stokes problem,  and   the uniform \(C^1\) boundary  estimate on the remaining  homogeneous Stokes problem,  through  a   Stokes decomposition on suitably   constructed  local \(C^2\) domains.
\\

\end{abstract}
  {\bf Keywords:} Inhomogeneous Navier--Stokes equations; Lions' weak solutions; existence; uniqueness; bounded domain

\section{Introduction}
The two-dimensional   nonhomogeneous incompressible Navier-Stokes equations  read as follows:
\begin{equation}
	\left\{\begin{array}{l}
		\rho_{t}+\operatorname{div}(\rho u)=0, \\
		(\rho u)_{t}+\operatorname{div}(\rho u \otimes u)- \mu \Delta u+\nabla P=0,\\
		\operatorname{div} u=0.
		\label{11}
	\end{array}\right.
\end{equation}
where $t\geq0, x\in \Omega\subset\mathbb{R}^2,$ $\Omega$ is a bounded connected  smooth domain. Moreover, $\rho, u$, and $P$ represent the density, velocity, and  pressure of the fluid, respectively.  The constant viscosity $\mu=1$  for simplicity.  In addition, system \eqref{11} is imposed with the following initial conditions
\begin{equation}
	\rho(x, 0)=\rho_{0}(x),\quad \rho u(x, 0)=\rho_{0} u_{0}(x), \quad x\in \Omega
	\label{13}
\end{equation}
and no-slip boundary condition
\begin{equation}
	u = 0, \quad x\in \partial \Omega.
	\label{14}
\end{equation}

The well-posedness of incompressible viscous fluid flows \eqref{11} has been extensively investigated in the literature. When the density is  constant (e.g., $\rho \equiv 1$), system \eqref{11} reduces to the classical homogeneous  Navier--Stokes  equations. In 1934, Leray \cite{Leray1934} established  the existence of global weak solutions (termed as turbulent solutions) for arbitrary initial velocity  $u_0 \in L^2(\mathbb{R}^3)$ satisfying $\operatorname{div} u_0 = 0$.   Classical  two-dimensional existence  and uniqueness   results   were    obtained by Leray  \cite{Leray1933, Leray1934-1}, Ladyzhenskaya \cite{Ladyzhenskaya1958}, and Lions and Prodi (see \cite{Prodi1960}). Later, for the three-dimensional Dirichlet problem, the critical regularity theory for strong solutions was initiated by Fujita and Kato \cite{KatoFujita1962}, who demonstrated local well-posedness for large initial data and global well-posedness for sufficiently small initial data in  the critical   fractional domain of the Dirichlet Stokes operator.  Moreover, for small data in $\dot{H}^{1/2}(\mathbb{R}^3)$, Chemin \cite{Chemin2005} gave an energy estimate showing that the critical norm is nonincreasing; while, by virtue of   semigroup   theory, 
Bedrossian and Vicol \cite{Bedrossian}   stated a modern proof of
     local well-posedness
 with large data and   global well-posedness
with   small data  in  $\dot{H}^{1/2}(\mathbb{R}^3)$.

For nonhomogeneous flows (i.e., the density is not constant), the  mathematical  analysis becomes  more complicated.    For bounded initial density bounded away from zero and {  square-integrable solenoidal initial velocity,} Kazhikhov \cite{Kazhikov1974} proved global existence of weak solutions in bounded { three-dimensional} domains. Later,  many researchers have focused on  the uniqueness of solutions which is fundamentally characterized by a balance between the regularity of  $u_0$ and the assumptions on $\rho_0$. For {  strong} {  solutions,} Ladyzhenskaya and Solonnikov \cite{Ladyzhenskaya1975} proved global unique solvability in 2D (and local in 3D) under Dirichlet boundary conditions {  for} {  divergence-free} {  initial velocity $u_0\in W^{2-2/p,p}(\Omega)$ with zero boundary trace, $p>n$,} and strictly { positive density $\rho_0\in C^1(\overline\Omega)$.} Then, to relax the velocity regularity to scaling-invariant levels, substantial progress was made within the critical Besov framework (extending the Fujita--Kato theory). In spaces such as $\dot{B}_{p,1}^{-1+n/p}(\mathbb{R}^n)$, the summability index $1$ naturally ensures $\nabla u \in L^1(0,T; L^\infty(\mr^n))$, which is crucial for  uniqueness. Building on this approach, global well-posedness {  under suitable smallness conditions} was established in \cite{Danchin2003}, \cite{A_2007} and \cite{DanchinMucha2012}, {  for} positive initial density (no vacuum) {  with} {  suitable} {  critical} {  Besov} {  regularity} or small {  multiplier} {  perturbations of} a positive {  constant.} {  Earlier} {  results} {  for} {  general} {  bounded} {  density,} { including} discontinuous or piecewise constant {  density,} {  often} required {  higher} {  velocity} {  regularity.} Danchin and Mucha \cite{Danchin_2013} proved local-in-time  uniqueness  in bounded domain with $u_0 \in H^2(\Omega)$ in dimension $n=2,3$. For the whole space, the  global well-posedness for bounded density (strictly away from zero) was established in \cite{pzz_2013}  with $u_0 \in H^s(\mathbb{R}^2)$ ($s>0$), or  $u_0 \in H^1(\mathbb{R}^3)$  with $\|u_0\|_{L^2(\mr^3)}\|\na u_0\|_{L^2(\mr^3)}$ sufficiently small. The same global existence and uniqueness hold for $u_0 \in H^s(\mathbb{R}^3)$ ($s>1/2$) with $\|u_0\|_{\dot H^{1/2}(\mathbb{R}^3)}$ sufficiently small (see \cite{wei_2025}). Moreover, Danchin and Wang \cite{Dan_2023} established the uniqueness when $\|\rho_0-1\|_{L^\infty(\mathbb{R}^n)}$ is sufficiently small with $u_0 \in \dot{B}_{p,1}^{-1+n/p}(\mathbb{R}^n)$ ($p \in (1,n), n=2,3$), { with a sufficiently small critical velocity norm in dimension three} and { the} additional condition that $u_0 \in L^2(\mathbb{R}^3)$ for $p \in (2,3)$.

When the initial density is allowed to vanish, Simon \cite{Simon_1990}  established the global existence of weak solutions for  $0 \leq \rho_0 \in L^\infty(\Om)$ and {$u_0 \in L^2_\sigma(\Om)$} in bounded domains. In  \cite{Lions_1996},  Lions further extended the existence  to more general settings, including density-dependent viscosity, where  the density equation holds true in a renormalized sense.
 However, {for general initial densities allowing vacuum,} uniqueness of { Lions'} weak solutions remains a longstanding open problem, since  $u_0\in L^2(\Om)$ fails to ensure the  critical regularity  $\nabla u \in L^1(0,T; L^\infty(\Om))$, the key obstruction to  standard Eulerian uniqueness proofs. Hence, uniqueness (or weak-strong uniqueness) has traditionally relied on   strong solution theory, requiring  additional compatibility conditions and higher regularity on the initial data, see \cite{Huang_2013, Huang_2014}. Even when compatibility conditions were  removed or relaxed,  uniqueness still needs to elevate $u_0$ beyond $L^2(\Om)$ (typically requiring $\nabla u_0 \in L^2(\Om)$ or higher),  with extra technical constraints on density.  L\"{u}, Shi, and Zhong \cite{Lu_2018} obtained the global existence and uniqueness of strong solutions in $\mathbb{R}^2$  under the condition
 $\sqrt{\rho_0}u_0\in L^2(\mr^2)$ and   $\nabla u_0 \in L^2(\mr^2)$ coupled with specific density constraints.
  For 3D, it should be noted  that
    He, Li, and L\"{u} \cite{Li_2021} established exponential stability of strong solutions, with higher  velocity
  regularity ($\nabla u_0 \in L^2(\mathbb{R}^3)$ and smallness in fractional Sobolev spaces) and additional density restrictions. {  Remark 1.4 in \cite{Li_2021} discusses strong-weak uniqueness.}

In recent years, the density patch problem ($\rho_0=a\mathbf{1}_D+b$ for some $a>b\geq 0$ and some bounded open set  $D\subset\Om$)   raised by 
Lions in \cite{Lions_1996},  has attracted much attention.  The  interface regularity for initial velocity in non-critical space was studied in  {  \cite{GancedoEtAl2025, gancedo2018, zp_2016, zp_2019}}, and { related strong-solution results also allow general bounded nonnegative} initial { density.} For {  simply connected} bounded {  domains} and the torus, Danchin and Mucha \cite{DanchinMucha2019} proved global existence and uniqueness of strong solutions with $0\leq\rho_0\in L^\infty(\Om)$  and {  $u_0\in H^1_{0,\sigma}(\Omega)$ in the Dirichlet case, with the corresponding periodic space on the torus,} using Lagrangian {  coordinates.}  Later, these results were  extended to unbounded domains by Prange and Tan \cite{jin_2024} if $\sqrt{\rho_0}u_0 \in L^2(\mr^2)$ and $\nabla u_0 \in L^2(\mr^2)$ with additional far-field and compatibility assumptions. Recently, Hao, Shao, Wei, and Zhang \cite{wei_patch_2024} constructed global strong solutions by removing these extra assumptions,  and  established the   weak-strong uniqueness for compactly supported density or density with uniform local mass.  Moreover,  Danchin \cite{Danchin2025} demonstrated the  global existence of weak solutions for initial data  $0<\rho_0\in L^{\infty}(\mr^2)$ bounded away from zero and {  $u_0 \in L^2_\sigma(\mr^2)$}, however, uniqueness still required $u_0$ to belong to a suitable {  density-dependent} subspace of {  $L^2_\sigma(\mr^2)$} resembling $\dot{B}^0_{2,1}(\mr^2)$. More recently, a major breakthrough comes from  Hao, Shao, Wei, and Zhang  \cite{wei_2025},  establishing the uniqueness of Lions' weak solutions,  when $\rho_0\in L^\infty(\mathbb{R}^2)$ is bounded away from zero and $u_0 \in L^2(\mathbb{R}^2)$.
 
{  In} the present paper, {  for} $0\leq \rho_0\in L^\infty(\Om)$ (allowing vacuum) and $u_0 \in L^2_\sigma(\Omega)$, we  prove the global existence, regularity, and exponential decay-in-time rates of weak solutions. Moreover,  when  $0<\rho_0\in L^\infty(\Om)$ is bounded   away from zero, the  uniqueness  of weak solutions  is established by combining a stability estimate and an approximation via strong solutions.

Before  stating our main results,  we explain some notation and conventions  used throughout this paper. 

For $k\in\mathbb{Z}$, $1 \leq p\leq\infty$, the  Lebesgue  and Sobolev spaces are defined as follows:
\begin{equation*}
	\left\{
	\begin{array}{l}
		\|f\|_{L^p}=\|f\|_{L^p(\Omega)},\quad \|f\|_{W^{k,p}}=\|f\|_{W^{k,p}(\Omega)}, \\[2mm] \|f\|_{H^k}=\|f\|_{W^{k,2}(\Omega)},\quad	C_{0, \sigma}^{\infty}(\Omega)=\left\{f \in [C_{0}^{\infty}(\Om)]^2:  \operatorname{div} f=0 \right\},\\[2mm] L^2_\sigma=\overline{C_{0, \sigma}^{\infty}(\Om)}\,\,\text{closure in the norm of } L^{2},\\[2mm] H_{0, \sigma}^{1}=\overline{C_{0, \sigma}^{\infty}(\Om)} \,\,\text{closure in the norm of } H^{1}.
	\end{array}
	\right.
\end{equation*}
And for $1<q<\infty$, $1/q+1/{q'}=1$, $W^{-1,q}(\Om)$ denotes the dual space to $W^{1,q'}_0(\Om)$, and the duality pairing between $W^{-1,q}(\Omega)$ and $W^{1,q'}_0(\Omega)$ is denoted by $\langle\cdot,\cdot\rangle$.

Throughout the paper,  we use $C$ to denote a positive generic constant which may vary in different cases, and  $C(\alpha)$  to emphasize that $C$ depends on $\alpha$.   Moreover,   we set $B(x, r)$ as  the {  open ball} with center $x$ and radius $r$,
and also for an integrable function $f$,  
\be\nonumber
f_\Om \triangleq \frac{1}{|\Om|}\int_{\Om}f dx,
\ee
as the mean value of $f$ over $\Omega$.

Now, we introduce the definition of weak solutions.
\begin{definition}\la{d1}{\rm We say that $(\rho, u)$ is a global weak solution to problem \eqref{11}-\eqref{14}, if for any $T>0$, the following hold.
		\begin{description}
		\item(i) the density and the velocity satisfy  \[
		\rho\ge0,\quad
		\rho\in L^\infty(\Omega\times(0,T))\cap C([0,T);L^p(\Omega))
		\quad\text{for any }1\le p<\infty,
		\]
		\[
		u\in L^2(0,T;H^1_{0,\sigma}(\Omega)),\quad
		\rho|u|^2\in L^\infty(0,T;L^1(\Omega)),
		\]
		\[
		\mathbb P(\rho u)\in C([0,T];L^2_w(\Omega)),\quad
		\mathbb P (\rho u)(0)=\mathbb P (\rho_0u_0),
		\]where  	 $\mathbb P$   denotes the orthogonal  projection in $[L^2(\Om)]^2 $ onto {  $L^2_\sigma(\Omega)$,}  and $L^2_w$ means  the space \(L^2(\Om)\) endowed with its weak topology.
		\end{description}
		\begin{description}
			\item(ii) $(\rho, u)$ solves  $\eqref{11}$  in the sense of distribution, that is, for any  test functions $\varphi\in  C_0^{\infty}(\Omega \times[0, T)),$
			\begin{equation}\la{0}
				\int_{\Om}\rho_0\varphi(\cdot,0)dx+\int_0^T\int_{\Om}(\rho \varphi_t+\rho u\cdot\na \varphi) dxdt=0,
			\end{equation}
			and
			$$\int_0^T\int_{\Om}u\cdot\na \varphi dxdt=0,$$
			Moreover, 	for any  test functions  $g \in [C_0^{\infty}(\Omega \times[0, T))]^2 $ with $\operatorname{div} g=0$,
			\begin{equation}\la{00}
			\int_{\Om} \rho_0u_0 \cdot g(\cdot, 0) d x+\int_0^T\int_{\Om}\left(\rho u\cdot g_t+\rho u\otimes u:\na g\right) dxdt= \int_0^T\int_{\Om}\na u:\na g dxdt.
			\end{equation}
		\end{description}	
		
		\begin{description}
			\item(iii) $\left(\rho, u\right)$  satisfies the energy inequality
			\begin{equation}\la{y}
				\left(\frac{1}{2}\int_{\Omega} \rho\left|u\right|^2 d x\right)(t)+\int_0^t \int_{\Omega} |\na u|^2 d x d s \leq \frac{1}{2}\int_{\Omega}{\rho_0}|u_0|^2 dx,
			\end{equation}
			for a.e. $ t \in(0, T).$
	\end{description}	}
\end{definition}

Next, we are in a position to state our main results as follows.
\begin{theorem}\label{tm1} For given positive constant $\bar{\rho},$
	assume that the initial data $\left(\rho_{0}, u_{0}\right)$ satisfy 
	\begin{equation}
		0 \leq \rho_{0} \leq \bar{\rho}, \quad u_{0} \in L^2_{\sigma}.
		\label{15}
	\end{equation}	
	Then problem \eqref{11}-\eqref{14} admits a   global weak solution $(\rho, u, P)$ (with $\int_\Omega Pdx=0$)  satisfying for any $0<\tau<T<\infty$,
	\begin{equation}\la{150}
		\left\{\begin{array}{l}
		0 \leq \rho\leq\bar\rho,\,\,	0 \leq \rho \in C\left([0, T]; L^p\right), \,\forall  p\in[1,\infty),    \\[2mm]
			 u \in L^{2}(0, T;H_0^1), \,\, \na u\in L^\infty(\tau,T;L^2),\\[2mm]
			\sqrt{\rho} \dot{u} \in L^2\left(\tau, T; L^2\right)\cap  L^{\infty}\left(\tau, T; L^2\right),\\[2mm]
			\nabla \dot{u} \in  L^2\left(\tau, T; L^2\right), \,\, t^{\frac12}\na u \in L^2\left(0, T; L^\infty\right),\\[2mm]
			(\nabla u, P )\in  L^{\infty}\left(\tau, T; L^p\right)\cap L^2\left(\tau, T; W^{1, p}\right), \,\forall p\in(2,\infty),
		\end{array}\right.
	\end{equation}
where $\dot{u}\triangleq u_t+u\cdot\na u$ denotes the material derivative of
the velocity.
	Moreover, there exists a positive constant $\bar\alpha$ depending only on $\bar{\rho}$, and $\Omega$, such that, for  any $t \geq 1$ and  $ p\in[2,\infty)$,
	\begin{equation}\la{1500}
	\|\nabla u(\cdot,t)\|_{L^{p}}+\|P(\cdot,t)\|_{L^{p}}+\|\sqrt{\rho} \dot u(\cdot,t)\|_{L^2} +\|\nabla^2u(\cdot,t)\|_{L^2}+\|\na P(\cdot,t)\|_{L^2}\leq C e^{-\bar\alpha t}, 
	\end{equation}
	where   $C$ depends only on $p, \bar{\rho}, \|u_0\|_{L^2_\sigma}$, and $\Omega$.	
\end{theorem}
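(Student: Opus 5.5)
We obtain the solution by compactness from smooth approximations, so the real content is a set of a priori estimates that are uniform in the regularization. We mollify the data to $\rho_0^\delta=\rho_0*j_\delta+\delta$ (so $\delta\le\rho_0^\delta\le\bar\rho+\delta$) and choose $u_0^\delta\in C^\infty_{0,\sigma}$ with $u_0^\delta\to u_0$ in $L^2$. For these data the classical theory (Ladyzhenskaya--Solonnikov) gives a global strong solution $(\rho^\delta,u^\delta,P^\delta)$. The task is then to derive bounds for it that depend only on $\bar\rho$, $\Omega$ and $\|u_0\|_{L^2}$, and not on $\delta$ or on any lower bound of the density.

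\textbf{Step 1: energy and exponential decay.} Transport gives $0\le\rho\le\bar\rho+\delta$, and the energy equality holds. Poincar\'e's inequality gives $\int\rho|u|^2\le\bar\rho C_\Omega\|\nabla u\|_{L^2}^2$, so $\frac{d}{dt}\int\rho|u|^2+c\int\rho|u|^2\le0$, and hence
\[
\int\rho|u|^2\le Ce^{-2\bar\alpha t},\qquad \int_t^\infty\|\nabla u\|_{L^2}^2\,ds\le Ce^{-2\bar\alpha t}.
\]

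\textbf{Step 2: time-weighted $H^1$ estimate.} We test the momentum equation with $\dot u$ and multiply by $\sigma(t)=\min(1,t)$. The main difficulty is the pressure/convection term $\int P\,\partial_i u^j\partial_j u^i$. Since $\operatorname{div}u=0$, the quantity $\partial_iu^j\partial_ju^i=\partial_i(u^j\partial_ju^i)$ is a div-curl type quantity. We extend $u$ by zero (the Sobolev extension preserves $H^1_0$) and treat it as lying in the Hardy space $\mathcal H^1(\mathbb R^2)$ with norm $\le C\|\nabla u\|_{L^2}^2$. By $\mathcal H^1$--BMO duality,
\[
\Big|\int P\,\partial_iu^j\partial_ju^i\Big|\le C\|P\|_{BMO}\|\nabla u\|_{L^2}^2\le C\|\nabla P\|_{L^2}\|\nabla u\|_{L^2}^2 .
\]
The Stokes estimate $\|\nabla^2u\|_{L^2}+\|\nabla P\|_{L^2}\le C\|\rho\dot u\|_{L^2}$ then gives
\[
\frac{d}{dt}\big(\sigma\|\nabla u\|_{L^2}^2\big)+\sigma\|\sqrt\rho\dot u\|_{L^2}^2\le C\|\nabla u\|_{L^2}^2+C\sigma\|\nabla u\|_{L^2}^4 .
\]
Gronwall's inequality together with $\nabla u\in L^2L^2$ yields $\sigma\|\nabla u\|_{L^2}^2+\int\sigma\|\sqrt\rho\dot u\|^2\le C$.

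\textbf{Step 3: estimate for $\dot u$.} We apply $\partial_t+u\cdot\nabla$ to the momentum equation, test with $\sigma^2\dot u$, and use the Ladyzhenskaya inequality together with the bound $\|\dot u\|_{L^4}\le C\|\nabla\dot u\|_{L^2}$ (valid since $\dot u=0$ on $\partial\Omega$). This gives $\sigma^2\|\sqrt\rho\dot u\|^2+\int\sigma^2\|\nabla\dot u\|^2\le C$. From Stokes $L^p$ theory we get $\|\nabla u\|_{W^{1,p}}+\|P\|_{W^{1,p}}\le C\|\rho\dot u\|_{L^p}\le C\|\nabla\dot u\|_{L^2}$, which gives the $W^{1,p}$ statements on $(\tau,T)$.

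\textbf{Step 4: the bound $t^{1/2}\nabla u\in L^2(0,T;L^\infty)$.} For this we use the Brezis--Wainger/Gagliardo--Nirenberg bound
\[
\|\nabla u\|_{L^\infty}\le C\|\nabla u\|_{L^2}^{1-\theta}\|\rho\dot u\|_{L^p}^{\theta}
\]
with $\|\rho\dot u\|_{L^p}\le C\|\sqrt\rho\dot u\|_{L^2}^{2/p}\|\nabla\dot u\|_{L^2}^{1-2/p}$. The time weights from Steps 2--3 balance, and integrability near $t=0$ follows from $\int\sigma\|\sqrt\rho\dot u\|^2$, $\int\sigma^2\|\nabla\dot u\|^2$ and $\int\|\nabla u\|^2$. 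This needs careful exponent bookkeeping, and I expect it to be the main obstacle. If the direct interpolation fails, I would fall back on interpolating between $L^2$ and $\dot W^{1,p}$ estimates with $p$ close to $2$.

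\textbf{Step 5: decay for $t\ge1$.} We repeat Steps 2--3 on $[t-1,t+1]$ using the exponentially small quantity $\int_{t-1}^{\infty}\|\nabla u\|^2$. Multiplying the differential inequalities by $e^{\bar\alpha t}$ gives exponential decay of $\|\nabla u\|_{L^2}$ and $\|\sqrt\rho\dot u\|_{L^2}$. The Stokes estimates then transfer this decay to $\|\nabla^2u\|_{L^2}$, $\|\nabla P\|_{L^2}$, and $\|\nabla u\|_{L^p}$, $\|P\|_{L^p}$ (using Sobolev embedding and the normalization $\int P=0$).

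\textbf{Step 6: passage to the limit.} We pass to the limit $\delta\to0$ using the Aubin--Lions lemma for $u$ on $[\tau,T]$, the $L^2(0,T;H^1)$ bound near $t=0$, and DiPerna--Lions renormalization to get strong convergence of $\rho$ in $C([0,T];L^p)$. The bounds pass to the limit by lower semicontinuity, which yields \eqref{150} and \eqref{1500}.
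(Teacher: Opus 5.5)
Apart from Step 4, your plan follows the paper closely: zero extension of $u$, an $H^1(\mathbb R^2)$ extension of $P$ (needed before you can speak of $\|P\|_{\mathrm{BMO}}$), $\mathcal H^1$--BMO duality, weights $\min\{1,t\}$ and $e^{\alpha t}$, Stokes $L^p$ theory, and compactness. In Step 3 you should also say how $\int\dot u\cdot\nabla P_t$ is handled. One moves $\frac{d}{dt}$ onto $\int P\,\partial_ju^k\partial_ku^j$, kills the cubic terms with $\operatorname{tr}(\nabla u)^3=0$, and bounds $\int P\,\partial_iu^k\partial_k\dot u^i$; the paper uses $\mathcal H^1$--BMO again here. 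This part is routine.

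\textbf{The gap is in Step 4.} Your exponent bookkeeping does balance. With $\theta=\frac{p}{2(p-1)}$, $a=\frac{p-2}{p-1}$ and $b=\frac{2}{p-1}$ you would get $t\|\nabla u\|_{L^\infty}^2\le C(\|\nabla u\|_{L^2}^2)^{a/2}(t\|\sqrt\rho\dot u\|_{L^2}^2)^{b/2}(t^2\|\nabla\dot u\|_{L^2}^2)^{a/2}$. Since $a+\frac b2=1$, H\"older would close.

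What fails is the inequality $\|\rho\dot u\|_{L^p}\le C\|\sqrt\rho\dot u\|_{L^2}^{2/p}\|\nabla\dot u\|_{L^2}^{1-2/p}$, which is false when $\rho$ can vanish. Take the Moser function $f\in H^1_0$ with $\|\nabla f\|_{L^2}\le C$ and $f=M=(\log\frac1\delta)^{1/2}$ on $B_\delta$, together with $\rho=\bar\rho\mathbf 1_{B_\delta}$. The ratio of the two sides is then of order $M^{1-2/p}\to\infty$.

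The correct inequality comes from interpolating $L^2$ and $L^q$ with $q<\infty$. It carries an exponent $\lambda<2/p$ on $\|\sqrt\rho\dot u\|_{L^2}$. The same H\"older argument then leaves a factor $t^{-\theta(2/p-\lambda)}$ near $t=0$, and this cannot be absorbed. The three quantities you use are invariant under $u\mapsto\lambda u(\lambda x,\lambda^2t)$, so no bound with an extra negative power of $t$ can follow from them. Already for the heat flow, $\int_0^1t^{1-\epsilon}\|\nabla e^{t\Delta}u_0\|_{L^\infty}^2dt$ is not controlled by $\|u_0\|_{L^2}^2$.

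Your fallbacks do not help. Taking $q$ larger or $p$ closer to $2$ shrinks the loss but never removes it. A Brezis--Wainger variant trades it for a logarithm, which is equally fatal at this critical level.

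\textbf{The missing idea} is an $L^\infty$ bound on $\nabla u$ that is exactly scale-invariant and in which $\rho$ enters only through $L^1$.
\begin{itemize}
\item The paper first proves the pointwise estimate $|\nabla u(x_0)|\le Cr^{-1}\|\nabla u\|_{L^2}+C\int_{E_r(x_0)}\frac{|\rho\dot u(y)|}{|x_0-y|}dy$. This uses a Stokes decomposition on uniformly $C^2$ local domains, the Green function bound $|\nabla G|\le C|x-y|^{-1}$ for the part with source $\rho\dot u$, and scale-uniform $C^1$ estimates for the homogeneous remainder.
\item It then splits $\dot u$ into its average $a_r$ over $B(x_0,10r)$ (of the zero extension) plus a remainder. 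Poincar\'e bounds the remainder contribution by $C\bar\rho r\|\nabla\dot u\|_{L^2}$.
\item For the average, the Lorentz bound $\int_E\rho|x_0-y|^{-1}dy\le C\bar\rho^{1/2}\|\rho\|_{L^1(E)}^{1/2}$ gives $\|\rho\|_{L^1(E)}^{1/2}|a_r|\le Cr\|\nabla\dot u\|_{L^2}+C\|\sqrt\rho\dot u\|_{L^2}$.
\item Optimizing in $r$ gives $\|\nabla u\|_{L^\infty}\le C(\|\nabla u\|_{L^2}^{1/2}\|\nabla\dot u\|_{L^2}^{1/2}+\|\sqrt\rho\dot u\|_{L^2})$. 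This closes by Cauchy--Schwarz against $\int\|\nabla u\|_{L^2}^2$, $\int t^2\|\nabla\dot u\|_{L^2}^2$ and $\int t\|\sqrt\rho\dot u\|_{L^2}^2$ with no loss.
\end{itemize}
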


In particular, if the initial density is bounded away from vacuum, we have the following  uniqueness results of weak solutions.

\begin{theorem}\label{tm2}
  Let $T>0$ and $(\rho, u,  {P})$, $(\hat{\rho}, \hat{u},   \hat{{P}})$ (with $\int_\Omega \left(P,\hat P\right)dx=0$) be two weak solutions of \eqref{11}-\eqref{14} on $ \Om\times [0, T]$. Assume in addition that	
\begin{description}
\item	{\rm(i)}
$\sqrt{\rho}(u-\hat{u}) \in L^{\infty}\left(0, T ; L^2\right)$;\end{description}
\begin{description}
\item	{\rm(ii)}
 $\nabla\left(u-\hat{u} \right)   \in L^2\left(0, T ; L^2\right)$;\end{description}
\begin{description}
 \item	{\rm(iii)}
 $\rho_0$ is bounded away from zero and $(\rho, u),(\hat{\rho}, \hat{u})$ satisfy the energy inequality  \eqref{y}.\end{description}

Then $(\rho, u,   {P})=(\hat{\rho}, \hat{u},   \hat{{P}})$ on $\Om \times [0, T]$.\end{theorem}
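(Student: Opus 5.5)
The plan is to compare each of the two given weak solutions with one fixed family of smooth approximate solutions, rather than with each other. The justification is that neither $(\rho,u)$ nor $(\hat\rho,\hat u)$ is known to carry any regularity beyond the energy level and the hypotheses (i)--(iii). I will show that every weak solution satisfying (i)--(iii) is the limit of strong solutions with mollified data, and that this limit depends only on $(\rho_0,u_0)$. Then $(\rho,u)$ and $(\hat\rho,\hat u)$ coincide, and $P=\hat P$ follows from the momentum equation together with the normalization $\int_\Omega P\,dx=\int_\Omega \hat P\,dx=0$.

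\emph{Step 1 (approximating strong solutions).} Take smooth data $\rho_0^\varepsilon\to\rho_0$ in every $L^p$, $p<\infty$, with $\underline\rho\le\rho_0^\varepsilon\le\bar\rho$ uniformly, and $u_0^\varepsilon\in C^\infty_{0,\sigma}$ with $u_0^\varepsilon\to u_0$ in $L^2$. Let $(\rho^\varepsilon,u^\varepsilon,P^\varepsilon)$ be the corresponding global strong solutions. The a priori estimates behind Theorem \ref{tm1} depend only on $\bar\rho$, $\|u_0\|_{L^2}$ and $\Omega$, so they give $\varepsilon$-uniform bounds. Specifically, I use
\[
t^{1/2}\nabla u^\varepsilon\in L^2(0,T;L^\infty),\qquad t^{1/2}\sqrt{\rho^\varepsilon}\dot u^\varepsilon\in L^\infty(0,T;L^2),\qquad t^{1/2}\nabla\dot u^\varepsilon\in L^2(0,T;L^2).
\]
In particular $\int_0^t\|\nabla u^\varepsilon\|_{L^\infty}\,ds\le C t^{1/2}$. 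This makes the flow of $u^\varepsilon$ uniformly Lipschitz, with a small constant for small times.

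\emph{Step 2 (stability estimate).} Fix a weak solution $(\rho,u)$ satisfying (i)--(iii) and set $w=u-u^\varepsilon$, $\delta\rho=\rho-\rho^\varepsilon$. I will use the energy inequality \eqref{y} for $(\rho,u)$, the energy identity for $u^\varepsilon$, and the weak formulation \eqref{00} tested with $u^\varepsilon$. Here (i)--(ii) and \eqref{0} with test function $\frac12|u^\varepsilon|^2$ justify this testing. The result is the relative energy inequality
\[
\tfrac12\|\sqrt\rho w(t)\|_{L^2}^2+\int_0^t\|\nabla w\|_{L^2}^2
\le \tfrac12\|\sqrt{\rho_0}(u_0-u^\varepsilon_0)\|_{L^2}^2+\int_0^t\!\!\int\rho|w|^2|\nabla u^\varepsilon|
+\int_0^t\!\!\int \delta\rho\,\dot u^\varepsilon\cdot w ,
\]
up to a term involving $\rho_0-\rho_0^\varepsilon$ that tends to zero with $\varepsilon$.

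The first integral is controlled by Gronwall's inequality, since $\|\nabla u^\varepsilon\|_{L^\infty}\in L^1(0,T)$.

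For the density term I write the transport equation as
\[
\delta\rho_t+\operatorname{div}(\delta\rho\,u^\varepsilon)=-\operatorname{div}(\rho\,w).
\]
Transporting along the Lipschitz flow of $u^\varepsilon$ (a duality argument with the backward transport equation) gives
\[
\|\delta\rho(t)\|_{H^{-1}}\le C\Big(\|\rho_0-\rho_0^\varepsilon\|_{H^{-1}}+\int_0^t\|\rho w\|_{L^2}\,ds\Big),
\]
with $C$ depending on $\exp\int_0^t\|\nabla u^\varepsilon\|_{L^\infty}$. Then
\[
\int\delta\rho\,\dot u^\varepsilon\cdot w\le \|\delta\rho\|_{H^{-1}}\big(\|\nabla\dot u^\varepsilon\|_{L^2}\|w\|_{L^\infty\text{-type}}+\|\dot u^\varepsilon\|_{L^4}\|\nabla w\|_{L^4}\big).
\]
This must be closed with $\|w\|_{L^2}\le\underline\rho^{-1/2}\|\sqrt\rho w\|_{L^2}$, which is exactly where the lower bound on $\rho_0$ in (iii) is used. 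One also needs two-dimensional interpolation (a log-type or $L^4$ Ladyzhenskaya bound, so that $\|\nabla w\|_{L^2}$ is absorbed) and the time-weighted bounds from Step 1. These give a Gronwall–Osgood type inequality for
\[
X(t)=\sup_{s\le t}\|\sqrt\rho w(s)\|_{L^2}^2+\int_0^t\|\nabla w\|_{L^2}^2 ,
\]
with forcing that tends to zero with $\varepsilon$.

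\emph{Main obstacle.} I expect the hard part to be the time singularity at $t=0$. The weights $t^{1/2}$ produce $\int_0^t s^{-1/2}\cdots$ factors, so the naive Gronwall fails. I would first work on a short interval $[0,T_0]$, where $\int_0^{T_0}\|\nabla u^\varepsilon\|_{L^\infty}\le CT_0^{1/2}$ is small. On that interval I would bound $\|\delta\rho\|_{H^{-1}}\lesssim o(1)+t^{1/2}X(t)^{1/2}$, so that the $t^{-1/2}$ from $\dot u^\varepsilon$ is exactly compensated. I would then iterate on later intervals, where all weights are harmless.

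\emph{Conclusion.} This yields $X(T)\to0$ as $\varepsilon\to0$, hence $u^\varepsilon\to u$ in $L^\infty(0,T;L^2)\cap L^2(0,T;H^1)$. The transport estimate then gives $\rho^\varepsilon\to\rho$. The identical argument applies to $(\hat\rho,\hat u)$ with the same approximating family, so $(\rho,u)=(\hat\rho,\hat u)$, and the pressures agree by the normalization, as noted at the start.
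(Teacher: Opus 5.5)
Your global strategy matches the paper's. You compare both weak solutions with a common family of strong solutions, use a relative energy inequality and duality for the transported density difference, use $\|w\|_{L^2}\le\underline\rho^{-1/2}\|\sqrt\rho\,w\|_{L^2}$, and recover the pressure by normalization. However, the short-time step, which is the whole difficulty, rests on a false estimate. From $s^{1/2}\nabla u^\varepsilon\in L^2(0,T;L^\infty)$, Cauchy--Schwarz only gives
\[
\int_s^t\|\nabla u^\varepsilon\|_{L^\infty}\,d\tau\le|\ln(t/s)|^{1/2}\,\|\tau^{1/2}\nabla u^\varepsilon\|_{L^2(0,t;L^\infty)},
\]
which diverges as $s\to0$. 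So the claimed bound $\int_0^t\|\nabla u^\varepsilon\|_{L^\infty}\,ds\le Ct^{1/2}$ is false. In fact $\nabla u\in L^1(0,T;L^\infty)$ genuinely fails for $L^2$ data; this is exactly the obstruction named in the introduction.

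Consequently the flow of $u^\varepsilon$ is not uniformly Lipschitz on any $[0,T_0]$. Both the factor $\exp\int_0^t\|\nabla u^\varepsilon\|_{L^\infty}$ in your transport bound and your Gronwall factor blow up as $\varepsilon\to0$. The term $\int\rho|w|^2|\nabla u^\varepsilon|$ is easily repaired: bound it by $\bar\rho\|w\|_{L^4}^2\|\nabla u^\varepsilon\|_{L^2}$ and use Ladyzhenskaya with the lower density bound. The density term cannot be repaired this way. Your time weights are also off. For $L^2$ data the uniform bounds of Lemma \ref{lm32} are $t\sqrt{\rho}\,\dot u^\varepsilon\in L^\infty(0,T;L^2)$ and $t\nabla\dot u^\varepsilon\in L^2(0,T;L^2)$. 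Thus $\dot u^\varepsilon$ is singular like $t^{-1}$, not $t^{-1/2}$, and $\int_0^T\|\nabla\dot u^\varepsilon\|_{L^2}\,dt$ is not uniformly bounded.

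Two further choices then fail.

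\emph{Mollifying $\rho_0$.} This gives $\delta\rho(t)\to\rho_0-\rho_0^\varepsilon\neq0$ as $t\to0$, so no factor $t$ can be extracted from $\delta\rho$. Its pairing with $\dot u^\varepsilon\cdot w$, which is of order $t^{-1}$, is then not uniformly integrable near $t=0$. There is no need to mollify: the paper keeps $\rho_0$ fixed and approximates only $u_0$ in $H^1_{0,\sigma}$, using Danchin--Mucha strong solutions for rough density. Hence $\rho^\delta(0)=0$.

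\emph{The $H^{-1}$--$H^1_0$ pairing.} This requires $\dot u^\varepsilon\cdot w\in H^1$, i.e.\ control of $\|w\|_{L^\infty}$, $\|\nabla w\|_{L^4}$ or $\|\dot u^\varepsilon\|_{L^\infty}$. None of these is available from energy-level information on $w$ and the uniform bounds on $\dot u^\varepsilon$.

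The missing ideas are the following.
\begin{enumerate}
\item Pair in $W^{-1,4}$--$W^{1,4/3}_0$, which needs only $\|w\|_{L^4}$ and $\|\nabla w\|_{L^2}$.
\item Accept the sub-power loss $e^{C|\ln(t/s)|^{1/2}}$ of the backward transport (Lemma \ref{lem:transport-bdd}).
\item Use Lemma \ref{lem:kernel} to convert $\|\delta\rho(t)\|_{W^{-1,4}}\le C\int_0^t\|\rho w\|_{L^4}e^{C|\ln(t/s)|^{1/2}}\,ds$ into $\|s^{-1}\delta\rho\|_{L^4(0,t;W^{-1,4})}\le C\|\rho w\|_{L^4(0,t;L^4)}\le CX(t)^{1/2}$.
\end{enumerate}
This full factor $t$, obtained only in the $L^4_t$ sense, is what compensates the weight in $t\dot u^\varepsilon$. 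It yields
\[
\int_\tau^t|\langle\delta\rho,\dot u^\varepsilon\cdot w\rangle|\,ds\le C\|s\nabla\dot u^\varepsilon\|_{L^2(\tau,t;L^2)}^{1/2}X(t).
\]
The argument is then closed on finitely many intervals, whose number is controlled by $\|t\nabla\dot u^\varepsilon\|_{L^2(0,T;L^2)}\le C_0$, not by a short-time Lipschitz/Gronwall bound.
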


\begin{remark}
Theorems \ref{tm1} and \ref{tm2} can be viewed as the global well-posedness result of weak solutions {  with density bounded away from zero} for 2D inhomogeneous {  Navier--Stokes} equations in  bounded  domains, with the natural energy level $u_0\in L^2_\sigma(\Omega)$. Compared with  {  Danchin and Mucha} \cite{DanchinMucha2019}, our  requirement of regularity on velocity weakens from $u_0\in H^1_{0,\sigma}(\Omega)$ to $u_0\in L^2_\sigma(\Omega)$, while requiring  a strictly positive lower bound on the initial density. Moreover, in contrast  to  \cite{wei_2025}, where addressed the whole space,  we extend the well-posedness results of weak solutions  to bounded domains, by applying  new techniques,  including   the Green representation,
the duality of BMO and  Hardy space, and the classical Sobolev extension theorem. 
\end{remark}

\begin{remark}
 In Theorem \ref{tm2},  positive lower bound on the initial density plays an essential role  in
our uniqueness argument. Specifically, it yields 
\[
\|u-\hat u\|_{L^2(\Om)}
\leq
\underline\rho^{-1/2}
\|\sqrt{\rho}(u-\hat u)\|_{L^2(\Om)},
\]
which is used to control the nonlinear terms in the energy inequality
for the velocity difference. 
 Moreover, it should be mentioned  that  a
recent result \cite{Violini2026} concerns the  whole-space
density-patch problem with $\rho_0=\mathbf 1_D$, and 
$u_0\in L^2_\sigma(\mathbb R^2)$. As for the bounded domains, with general 
 density   containing   vacuum, the well-posedness of weak solutions deserves further study  in our future work.
\end{remark}

We now comment on the analysis of this paper. 
To prove  Theorem \ref{tm1},  we need to establish a priori estimates
on smooth solutions in suitable norms.  Among these estimates, the key issue is to establish a {  uniform time-weighted} bound on the {  $L^2$}-norm of {  $\na u$} (see \eqref{313}). To this end,   testing the momentum equation \eqref{11}$_2$ by $\dot u$ (the material derivatives of the velocity) similar as   \cite{Li_2019, wei_2025, wei_patch_2024}, it turns out that the crucial point is to bound the pressure term
$$
I\triangleq \int_{\Omega} P\partial_i u^j\partial_j u^i\,dx.
$$
Motivated by \cite{Lu_2018},  we want to employ  the duality of BMO and $\mathcal{H}^1$ spaces, which is valid  for the whole space $\mathbb{R}^2$. So,   some new ideas  are needed to overcome the boundedness of domain $\Om$.  By extending $u$ simply by zero extension, and $P$ by the classical Sobolev extension (see  \cite[Section 5.4, Theorem 1]{Evans_2010}), we succeed in bounding $I$  by $\|\na P\|_{L^2}\|\nabla u\|_{L^{2}}^2$ (see Lemma \ref{lemmaP1}). Moreover, we find that  the $L^{2}$-norm of $\nabla u$ in fact decays exponentially for any $t\geq 1$, due to the boundedness of the domain $\Om$ (see \eqref{313}).

As for the proof of  Theorem \ref{tm2}, the key point is to derive the estimate on 
$t^{1/2}\nabla u\in L^2(0,T;L^\infty)$,  which essentially follows from  the following    point-wise
inequality for $\nabla u(x,t)$ (see \eqref{bdrylocal3})
\[
	|\na u(x_0, t)|
	\le
	Cr^{-1}\|\na u\|_{L^2(\Om)}
	+
	C\int_{E_r(x_0)}\frac{|\rho\dot u(y,t)|}{|x_0-y|}dy,\quad x_0\in \Om.
\]
To reach this point, for any $x_0\in\Omega$,   distinguishing     into  near-boundary point and interior point, we construct 
a smooth local domain $E_r(x_0)\subset\Omega$, respectively, with uniform normalized $C^2$ bounds
  in $x_0$ and $r$ (see Lemma \ref{lm:bdrylocal}). Moreover, on $E_r(x_0)$,  we decompose  problem  \eqref{11}  into a linear nonhomogeneous Dirichlet Stokes problem with source term $\rho\dot u$ and  a remaining homogeneous component. The desired estimate is then  followed  by combining   the gradient estimation of the  solutions to  the former, through  Green functions estimate and scaling and translation techniques, and  the   scale-uniform   \(C^1\) boundary  estimate for the latter. It is worth  noticing that  our method  differs substantially  from that of \cite{wei_patch_2024},  which relies on  the whole space, and is not  applicable to our present bounded domain.
   Then, motivated by \cite{wei_patch_2024}, we control the  principal  singular integral appearing  in the point-wise inequality for $\nabla u(x,t)$,  by  decomposing \(\dot u\) into its local average and the remainder. This,  coupled with the Poincaré and Lorentz-space estimates, leads to  $
t^{1/2}\nabla u\in L^2(0,T;L^\infty(\Omega))$ (see Lemma \ref{lm:bdrylemma23}).
Finally,  the uniqueness of Lions' weak solutions is obtained by a stability argument (see Lemma \ref{prop:stability-bdd}) and approximation by strong solutions.
Here, we should emphasize that, as in \cite{wei_2025}, the duality between \(W^{-1,4}(\Omega)\) and  \(W^{1,4/3}_0(\Omega)\) employed in the stability argument,  remains applicable to the  bounded  domains  with  Dirichlet boundary. 
   
\index{This weighted
$L^2_tL^\infty_x$ estimate permits the construction of a backward
transport solution with a controllable logarithmic loss. This yields`
\[
t^{-1}(\rho-\hat\rho)
 \in L^4(0,T;W^{-1,4}(\Omega)),
\]
which allows us to control the term involving \(\rho-\hat\rho\) in the energy inequality for \(u-\hat u\) through the duality between \(W^{-1,4}(\Omega)\) and  \(W^{1,4/3}_0(\Omega)\). Together with the energy estimate for $u-\hat{u}$ and the time-weighted bounds for $\dot{\hat{u}}$, this yields the stability
lemma (see Lemma \ref{prop:stability-bdd}). Finally,   both weak solutions are
then compared with the same sequence of strong solutions obtained by
 approximation in $H^1(\Om)$ of the initial velocity. The  stability
lemma forces the two weak solutions to have the same limit, which
yields uniqueness.}

The  paper is organized as follows. In Section \ref{2}, we list several known facts
  and some basic inequalities, which will be used in the subsequent analysis. Section \ref{3} is devoted to establishing some necessary a priori estimates.   Then, we prove Theorems \ref{tm1}-\ref{tm2} in Section \ref{4}. Finally, in Appendix \ref{sec:technical-lemmas}, we have provided a detailed proof of some  results used in Sections \ref{sec:bdry-linfty} and \ref{4.2}

\section{Preliminaries}\la{2}
In this section, we give some known facts and elementary inequalities which will be used later.

We start with the  following well-known Gagliardo-Nirenberg inequality.
\begin{lemma}[Gagliardo-Nirenberg]\la{lm21} Let $\Om\subset\mr^2$ be a smooth
	bounded domain. Then, for $q \in[2, \infty),\\ r \in(2, \infty)$, and $s \in(1, \infty)$, there exists a generic constant $C>0$ which  depends only on $q, r, s$ and $\Om$ such that for any $f \in H^{1}(\Omega)$ and $g \in L^{s}\left(\Omega\right) \cap W^{1, r}\left(\Omega\right)$, 
	\begin{equation*}
		\begin{aligned}
			&\|f\|_{L^{q}\left(\Omega\right)}^{q} \leq C\|f\|_{L^{2}\left(\Omega\right)}^{2}\|\nabla f\|_{L^{2}\left(\Omega\right)}^{q-2} + C\left\| f\right\|_{L^2(\Om)} ^{q}, \\
			&\|g\|_{C\left(\overline{\Omega}\right)} \leq C\|g\|_{L^{s}\left(\Omega\right)}^{s(r-2) /(2 r+s(r-2))}\|\nabla g\|_{L^{r}\left(\Omega\right)}^{2 r /(2 r+s(r-2))} +  C\left\| g\right\|_{L^2(\Om)}.
		\end{aligned}
		\label{22}
	\end{equation*}
 Moreover, 
\begin{equation*}
	\begin{aligned}
		&\|f-f_\Om\|_{L^{q}\left(\Omega\right)}^{q} \leq C\|f-f_\Om\|_{L^{2}\left(\Omega\right)}^{2}\|\nabla f\|_{L^{2}\left(\Omega\right)}^{q-2}.
	\end{aligned}
\end{equation*}
\end{lemma}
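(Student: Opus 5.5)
The plan is to reduce both inequalities of Lemma \ref{lm21} to their whole-space counterparts through a bounded extension operator. Since $\Omega$ is a smooth bounded domain, there is an extension operator $E$ that acts simultaneously from $L^p(\Omega)$ to $L^p(\mr^2)$ and from $W^{1,p}(\Omega)$ to $W^{1,p}(\mr^2)$ for all $1\le p\le\infty$. One can take Stein's universal extension, or the reflection construction of \cite[Section 5.4]{Evans_2010} followed by multiplication by a fixed cutoff; the reflection-based operator is bounded on both $L^p$ and $W^{1,p}$. The image $Ef$ is supported in a fixed ball $B\supset\overline\Omega$ and satisfies
\[
\|Ef\|_{L^p(\mr^2)}\le C\|f\|_{L^p(\Omega)},\qquad \|\nabla Ef\|_{L^p(\mr^2)}\le C\big(\|\nabla f\|_{L^p(\Omega)}+\|f\|_{L^p(\Omega)}\big).
\]

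For the first inequality, I will apply the whole-space Gagliardo--Nirenberg (Ladyzhenskaya-type) estimate $\|F\|_{L^q(\mr^2)}^q\le C\|F\|_{L^2}^2\|\nabla F\|_{L^2}^{q-2}$ to $F=Ef$. This gives
\[
\|f\|_{L^q(\Omega)}^q\le C\|f\|_{L^2}^2\big(\|\nabla f\|_{L^2}+\|f\|_{L^2}\big)^{q-2},
\]
and expanding the power by $(a+b)^{q-2}\le C(a^{q-2}+b^{q-2})$ yields the stated bound. The second inequality follows the same pattern. I will apply the whole-space bound $\|G\|_{L^\infty}\le C\|G\|_{L^s}^{\theta}\|\nabla G\|_{L^r}^{1-\theta}$, with $\theta=s(r-2)/(2r+s(r-2))$ fixed by scaling, to $G=Eg$. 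This produces an extra term $\|g\|_{L^s}^\theta\|g\|_{L^r}^{1-\theta}$. I will absorb it using Young's inequality together with the interpolation $\|g\|_{L^r}\le \|g\|_{L^s}^{a}\|g\|_{L^\infty}^{1-a}$, which is valid when $r\ge s$; the resulting small multiple of $\|g\|_{L^\infty}$ is moved to the left-hand side. What remains is a lower-order term of the form $C\|g\|_{L^s}$. When $s\le 2$ this is bounded by $C\|g\|_{L^2}$ on the bounded domain. In the general case I will instead run the absorption with $\|g\|_{L^2}$ in place of $\|g\|_{L^s}$ inside the lower-order term, interpolating $L^r$ between $L^2$ and $L^\infty$. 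Continuity up to $\overline\Omega$ follows from the Morrey embedding, since $r>2$.

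For the mean-zero inequality, the step I expect to be the main obstacle is removing the lower-order term, which cannot come from the extension argument alone. I will apply the first inequality to $h=f-f_\Omega$, noting that $\nabla h=\nabla f$, to get
\[
\|h\|_{L^q}^q\le C\|h\|_{L^2}^2\|\nabla f\|_{L^2}^{q-2}+C\|h\|_{L^2}^q.
\]
Then I will use the Poincaré--Wirtinger inequality $\|h\|_{L^2}\le C\|\nabla f\|_{L^2}$, which holds because $\Omega$ is connected and bounded. This gives $\|h\|_{L^2}^q=\|h\|_{L^2}^2\|h\|_{L^2}^{q-2}\le C\|h\|_{L^2}^2\|\nabla f\|_{L^2}^{q-2}$, so the second term is absorbed into the first and the claim follows.
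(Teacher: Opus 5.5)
The paper does not prove this lemma; it quotes it as a well-known inequality, so there is no argument of the authors to compare with. Your derivation is the standard one and it is correct. A single extension operator bounded on both $L^p$ and $W^{1,p}$ (Stein's, or the reflection operator with a cutoff) transfers the whole-space Gagliardo--Nirenberg inequalities to $\Omega$. The lower-order terms come from $\|\nabla Ef\|_{L^p}\le C(\|\nabla f\|_{L^p}+\|f\|_{L^p})$. The mean-zero version follows from Poincar\'e--Wirtinger, which uses the connectedness of $\Omega$ assumed in the paper.

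The one step to tighten is the branch $s>2$ of the second inequality. Interpolating only $\|g\|_{L^r}$ between $L^2$ and $L^\infty$ still leaves the factor $\|g\|_{L^s}^\theta$, so Young's inequality would produce a lower-order term in $\|g\|_{L^s}$ rather than $\|g\|_{L^2}$. Interpolate that factor as well, $\|g\|_{L^s}\le\|g\|_{L^2}^{2/s}\|g\|_{L^\infty}^{1-2/s}$. The extra term is then bounded by $\|g\|_{L^2}^{\beta}\|g\|_{L^\infty}^{1-\beta}$ with $\beta=2\theta/s+2(1-\theta)/r\in(0,1)$. Alternatively, bound it crudely by $|\Omega|^{1/s}\|g\|_{L^\infty}$. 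Young's inequality then gives $\varepsilon\|g\|_{L^\infty}+C_\varepsilon\|g\|_{L^2}$. The absorption is legitimate because $\|g\|_{L^\infty}<\infty$ by Morrey's embedding, since $r>2$.
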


Let $\mathcal{H}^{1}(\mathbb{R}^2)$ and $\mathrm {BMO}(\mathbb{R}^{2})$ stand  for the usual Hardy and $\mathrm {BMO}$ spaces respectively (see \cite[Sections II.1--II.2]{FeffermanStein1972}). Here $\mathrm{BMO}$ is understood modulo constants, with the mean-oscillation norm over squares.
Then  the following well-known facts (see \cite[Section 2]{Clms} for (i)) play a key role in the  proof of Lemma \ref{lemmaP1} in the next section.
\begin{lemma}\label{lemma2.3}
	{\rm(i)} There is a positive constant $C$ such that
	\begin{equation*}
		\|E \cdot B\|_{\mathcal{H}^{1}\left(\mathbb{R}^{2}\right)} \leq C\|E\|_{L^{2}\left(\mathbb{R}^{2}\right)}\|B\|_{L^{2}\left(\mathbb{R}^{2}\right)},
		\label{23}
	\end{equation*}	
	for any $E \in L^{2}\left(\mathbb{R}^{2}\right)$ and $B \in L^{2}\left(\mathbb{R}^{2}\right)$ satisfying
	\begin{equation*}
		\operatorname{div} E=0,\quad \nabla^{\perp} \cdot B=0 \text { in } \mathcal{D}^{\prime}\left(\mathbb{R}^{2}\right) .
	\end{equation*}
	{\rm(ii)} There is a positive constant $C$ such that
	\begin{equation*}
		\|v\|_{\mathrm{BMO}\left(\mathbb{R}^{2}\right)} \leq C\|\nabla v\|_{L^{2}\left(\mathbb{R}^{2}\right)},
		\label{24}
	\end{equation*}
	for any $v \in \left\{v \in H_{\mathrm{loc}}^1(\mathbb{R}^2) \mid \nabla v \in L^2(\mathbb{R}^2)\right\}$.
\end{lemma}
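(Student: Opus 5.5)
Lemma \ref{lemma2.3} collects two classical facts, and I would prove each by the standard route.

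For (i), the div--curl lemma of Coifman--Lions--Meyer--Semmes, I would use the maximal function characterization of $\mathcal H^1$. Fix a radial bump $\phi\in C_0^\infty(B(0,1))$ with $\int\phi=1$, and set $\phi_t=t^{-2}\phi(\cdot/t)$. It suffices to show
\[
\sup_{t>0}|\phi_t*(E\cdot B)|(x)\le C\,\big(M(|E|^{p})\big)^{1/p}(x)\,\big(M(|B|^{q})\big)^{1/q}(x),
\]
with $1<p,q<2$ and $1/p+1/q=1+1/2$. Indeed, $M$ is bounded on $L^{2/p}$ and $L^{2/q}$ since $2/p,2/q>1$, so Hölder then gives $\|\sup_t|\phi_t*(E\cdot B)|\|_{L^1}\le C\|E\|_{L^2}\|B\|_{L^2}$.

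To get the pointwise bound, I use $\nabla^\perp\cdot B=0$ on $\mathbb R^2$ to write $B=\nabla\pi$ with $\pi\in \dot H^1$. Since $\operatorname{div}E=0$, we have $E\cdot B=\operatorname{div}(E\,\pi)$, and the same identity holds with $\pi$ replaced by $\pi-c$ for any constant $c$. Then
\[
\phi_t*(E\cdot B)(x)=-\int\nabla\phi_t(x-y)\cdot E(y)\,\big(\pi(y)-\pi_{B(x,t)}\big)\,dy .
\]
I bound this using $|\nabla\phi_t|\le Ct^{-3}$ on $B(x,t)$ and Hölder with exponents $p,p'$. Then I apply the Sobolev--Poincaré inequality on the ball,
\[
\Big(\frac{1}{|B|}\int_B|\pi-\pi_B|^{p'}\Big)^{1/p'}\le Ct\Big(\frac{1}{|B|}\int_B|\nabla\pi|^{q}\Big)^{1/q},
\]
valid when $1/p'=1/q-1/2$, which is exactly the exponent relation above. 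The powers of $t$ cancel, which yields the claimed maximal bound.

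The step I expect to need the most care is the justification of $E\cdot B=\operatorname{div}(E(\pi-c))$ in $\mathcal D'$ for merely $L^2$ data. I would handle it by mollification: the identity holds for smooth divergence-free approximants, and one passes to the limit using $\pi\in L^{p'}_{loc}$ by Sobolev embedding.

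For (ii), given $v\in H^1_{loc}$ with $\nabla v\in L^2(\mathbb R^2)$ and any square $Q$ of side $\ell$, I apply the Poincaré inequality on $Q$ together with Cauchy--Schwarz:
\[
\frac1{|Q|}\int_Q|v-v_Q|\le \Big(\frac1{|Q|}\int_Q|v-v_Q|^2\Big)^{1/2}\le C\ell\,|Q|^{-1/2}\|\nabla v\|_{L^2(Q)}=C\|\nabla v\|_{L^2(Q)}.
\]
This uses the dimension $2$, where $\ell|Q|^{-1/2}=1$ is scale invariant. Taking the supremum over $Q$ gives $\|v\|_{\mathrm{BMO}}\le C\|\nabla v\|_{L^2(\mathbb R^2)}$, with $C$ the Poincaré constant of the unit square.
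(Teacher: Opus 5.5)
Your proof is correct. Part (ii) is exactly the paper's argument: Poincar\'e on each square, the two-dimensional scale invariance $\ell|Q|^{-1/2}=1$, then the supremum over squares.

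For part (i) the paper gives no argument and simply cites Coifman--Lions--Meyer--Semmes \cite[Section 2]{Clms}. What you wrote is essentially the CLMS proof itself. You write $B=\nabla\pi$ and use $E\cdot B=\operatorname{div}(E(\pi-\pi_{B(x,t)}))$ to move the derivative onto $\phi_t$. Sobolev--Poincar\'e with $1/p+1/q=3/2$ and $1<p,q<2$ then bounds the radial maximal function by a product of maximal functions. That product lies in $L^1$ by H\"older and the Hardy--Littlewood theorem on $L^{2/p}$ and $L^{2/q}$.

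The citation buys brevity. Your version makes the lemma self-contained, modulo the Fefferman--Stein maximal characterization of $\mathcal H^1$ \cite{FeffermanStein1972}. It also shows exactly where the div--curl structure enters: in the freedom to subtract the local mean of the potential $\pi$.

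Two small remarks.
\begin{itemize}
\item The sign in your identity is off; it should read $\phi_t*(E\cdot B)(x)=\int(\nabla\phi_t)(x-y)\cdot E(y)\,(\pi(y)-c)\,dy$ with a plus sign. This is harmless, since you only use absolute values.
\item No mollification is needed to justify $E\cdot B=\operatorname{div}(E(\pi-c))$. Since $E\in L^2$, the condition $\operatorname{div}E=0$ can be tested against any compactly supported $H^1$ function, by density. Taking $(\pi-c)\varphi$ with $\varphi\in C_0^\infty$ gives the identity directly, because $\pi\in H^1_{\rm loc}$.
\end{itemize}
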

{\it Proof of (ii). }For each square $Q\subset\mathbb R^2$, Poincar\'{e} inequality gives
\[
\begin{aligned}
\frac1{|Q|}\int_Q\left|v-\average{Q}v(y)\,dy\right|\,dx
&\le |Q|^{-1/2}\left\|v-\average{Q}v(y)\,dy\right\|_{L^2(Q)}\\
&\le C\|\nabla v\|_{L^2(Q)}
\le C\|\nabla v\|_{L^2(\mathbb R^2)}.
\end{aligned}
\]
Taking the supremum over all squares gives (ii).\hfill $\Box$

	Next,  we consider the divergence problem
\begin{equation}\la{div1}
	\operatorname{div} u=g,\,\, x\in\Om,\quad\left.\quad u\right|_{\partial \Omega}=0 .
\end{equation}
\begin{lemma}[{  {\cite[Theorem 4.9 and Remark 4.10]{CostabelMcIntosh2010}, \cite[Lemma 2.4]{peng_2021}}}]\la{div0}
	Let $\Om$ be  a bounded Lipschitz domain. Then  for every $g\in L^2(\Om)$ with $\int_\Om g dx=0$, there exists
	$u\in [W^{1,2}_0(\Om)]^2$ satisfying the problem \eqref{div1} and
\begin{equation*}\la{div2}
	\|\nabla u\|_{L^2(\Om)}\le C(\Om)\|g\|_{L^2(\Om)}.
\end{equation*}
\end{lemma}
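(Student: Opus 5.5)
The statement is the solvability of the divergence problem \eqref{div1} in $W^{1,2}_0$ on a bounded Lipschitz domain; this is the Bogovski\u{\i} construction. I would reduce to star-shaped pieces and glue.

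\emph{Step 1: star-shaped domains.} Suppose first that $\Omega$ is star-shaped with respect to every point of a ball $B=B(x_B,R)$ with $\overline B\subset\Omega$. Fix $\omega\in C_0^\infty(B)$ with $\int\omega\,dx=1$, and define
\[
u(x)=\int_\Omega g(y)\,(x-y)\int_1^\infty \omega\big(y+s(x-y)\big)s\,ds\,dy .
\]
A direct computation shows that $\operatorname{div}u=g-\omega\int_\Omega g\,dy=g$ when $g$ has zero mean. It also shows that $u$ vanishes near $\partial\Omega$ if $g\in C_0^\infty(\Omega)$. Differentiating the kernel gives
\[
\nabla u=K_1 g+K_2 g,
\]
where $K_1$ is a Calder\'on--Zygmund singular integral operator with kernel homogeneous of degree $-2$ (and mean-zero angular part) in $x-y$, plus bounded coefficients. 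The term $K_2$ is weakly singular, bounded by $C|x-y|^{-1}$. The Calder\'on--Zygmund theorem and Young's inequality then give
\[
\|\nabla u\|_{L^2}\le C(\mathrm{diam}\,\Omega,\ \mathrm{diam}\,\Omega/R)\,\|g\|_{L^2}.
\]
By density this extends to all $g\in L^2$ with mean zero, and $u\in W^{1,2}_0$.

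\emph{Step 2: decomposition.} A bounded Lipschitz domain is a finite union $\Omega=\bigcup_{k=1}^N\Omega_k$ of domains, each star-shaped with respect to a ball. Locally near the boundary this comes from the Lipschitz graph representation: a small cylinder under the graph is star-shaped with respect to a small ball placed below the graph. Compactness gives finitely many pieces.

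\emph{Step 3: splitting $g$ so each piece keeps zero mean.} This is the main obstacle. Take a partition of unity $\{\chi_k\}$ subordinate to $\{\Omega_k\}$. Order the pieces along a chain using connectedness, so that consecutive pieces overlap in a set containing a ball. Set
\[
g_k=\chi_k g+\text{corrections},
\]
where the corrections are constants times fixed bump functions supported in the overlaps. They are chosen so that each $g_k$ has zero mean on $\Omega_k$ and $\sum_k g_k=g$. The correction coefficients are linear combinations of $\int\chi_j g$, so $\|g_k\|_{L^2}\le C\|g\|_{L^2}$. Apply Step 1 on each $\Omega_k$, extend each $u_k$ by zero, and sum: $u=\sum_k u_k$. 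Then $u\in W^{1,2}_0(\Omega)$, $\operatorname{div}u=g$, and $\|\nabla u\|_{L^2}\le C(\Omega)\|g\|_{L^2}$.

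As an alternative I would mention but not pursue, one can obtain the same result by duality. This uses Ne\v{c}as' inequality $\|p-p_\Omega\|_{L^2}\le C\|\nabla p\|_{H^{-1}}$ on Lipschitz domains, which says the closed-range operator $-\nabla$ has the right properties, so the adjoint $\operatorname{div}:W^{1,2}_0\to L^2_0$ is onto.
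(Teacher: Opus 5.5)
Your proof is correct. It is the classical Bogovskii argument, carried out directly for vector fields: an explicit kernel on pieces star-shaped with respect to a ball, a Calder\'on--Zygmund bound, and a zero-mean splitting over a finite cover.

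The paper gives no argument of its own. It cites Costabel--McIntosh (Theorem~4.9 and Remark~4.10), whose Bogovskii-type operators act on differential forms. Applied in top degree to the zero extension of $g$, they produce a $1$-form $(a_1,a_2)\in H^1(\mathbb R^2)$ supported in $\overline\Omega$ with $\partial_1a_2-\partial_2a_1=g$. On a Lipschitz domain this gives $a_i|_\Omega\in H^1_0(\Omega)$, and the rotation $u=(a_2,-a_1)$ turns the identity into $\operatorname{div}u=g$.

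Those operators come from the same kernel as yours. They are shown to be pseudodifferential of order $-1$, which replaces your Calder\'on--Zygmund step and covers all Sobolev orders and form degrees at once; they are then extended to bounded Lipschitz domains. So the citation buys brevity and generality, while your route is elementary and self-contained and keeps track of how $C(\Omega)$ depends on the geometry of the cover.

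One detail in Step~3 needs adjusting. Connectedness does not in general let you order the pieces as a chain in which consecutive pieces overlap, because the overlap graph need not have a Hamiltonian path (think of one piece meeting three mutually disjoint ones). Instead, order them so that each $\Omega_k$ with $k\ge2$ meets some earlier $\Omega_{j(k)}$. Put the bump for that pair in $\Omega_k\cap\Omega_{j(k)}$, and push the accumulated means along the resulting tree toward $\Omega_1$; alternatively, allow repetitions in your chain. With this change, the bound $\|g_k\|_{L^2}\le C\|g\|_{L^2}$ and the rest of your argument go through unchanged.
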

%\noindent{\it Proof.} { Extend $g$ by zero to $\mathbb R^2$.
%By \cite[Theorem 4.9(c) and Remark 4.10]{CostabelMcIntosh2010}
%with $n=2$, $s=0$ and degree $2$, the zero-integral extension of $g$ admits
%$a_1,a_2\in H^1(\mathbb R^2)$, supported in $\overline\Omega$, such that}
%\begingroup
%\[
% \partial_1a_2-\partial_2a_1=g,\qquad
 %\|a_1\|_{H^1(\mathbb R^2)}+\|a_2\|_{H^1(\mathbb R^2)}
 %\le C(\Omega)\|g\|_{L^2(\Omega)}.
%\]
%\endgroup
%{ The support-space identification in \cite[Section 2]{CostabelMcIntosh2010}
%implies $a_1|_\Omega,a_2|_\Omega\in H^1_0(\Omega)$.
%Thus $u=(a_2,-a_1)|_\Omega$ satisfies \eqref{div1} and the asserted estimate.}
%\hfill { $\Box$}

  Let $L$ be a strongly elliptic operator of the form
\[
Lu=D_\alpha(A^{\alpha\beta}D_\beta u)
\]
 where the coefficients $A^{\alpha \beta}=A^{\alpha \beta}(x)$ are $2 \times 2$ matrix-valued functions on $\Omega$ with $A_{i j}^{\alpha \beta}$ satisfying the strong ellipticity condition, i.e., there is a constant $\lambda \in(0,1]$ such that
\begin{equation*}
	\left|A^{\alpha \beta}(x)\right| \leq \lambda^{-1}, \quad \sum_{\alpha, \beta=1}^2 A^{\alpha \beta}(x) \xi_\beta \cdot \xi_\alpha \geq \lambda \sum_{\alpha=1}^2\left|\xi_\alpha\right|^2 
\end{equation*}
for any $x \in \Omega$ and $\xi_\alpha, \xi_\beta\in \mathbb{R}^2, \alpha, \beta \in\{1,2\}$.  The adjoint operator ${L}^*$ is given by
$$
{L}^* u=D_\alpha\left(\left(A^{\beta \alpha}\right)^{\top} D_\beta u\right),
$$
where $\left(A^{\beta \alpha}\right)^{\top}$ is the transpose of the matrix $A^{\beta \alpha}$.

For the Stokes problem
	\begin{equation}\label{Stokes}
		\left\{\begin{aligned}
			{L}^* u+\nabla p&=f,\quad  x\in\Omega,\\
			\operatorname{div} u&=g,\quad x\in \Omega,  \\
			u&=0,  \quad x\in \partial \Omega,
		\end{aligned}\right.
	\end{equation}
	with $f\in [L^{\infty}(\Omega)]^2$ and $g\in L^{\infty}(\Omega), \int_\Om gdx=0$, we state the  following Green representation which will be useful for point-wise
estimate on $\nabla u(x,t)$. 
\begin{lemma}[{\cite[Theorem 3.2, Theorem 3.7, Remark 3.8]{ChoiKim_2019}}]\label{thm:CK37}
	Let $\Omega\subset\mathbb R^2$ has a $C^{1,\mathrm{Dini}}$ boundary. Suppose that the coefficients $A^{\alpha\beta}$ of $L$ are of
	Dini mean oscillation in $\Omega$. Then there exists a Green function 
	$(G,\Pi)$ of $L$ and $(G^*,\Pi^*)$ of $L^*$ such that  if $(u,p)\in [W^{1,2}_0(\Omega)]^2\times L^2(\Omega)$ is a weak solution
	of the Stokes problem \eqref{Stokes}, then  
	\begin{equation*}
		u(y)=-\int_{\Omega} G(x, y)^{\top} f(x) d x+\int_{\Omega} \Pi(x, y)^{\top} g(x) d x,\quad \text{for a.e. $y\in\Om$,}
	\end{equation*}
where $G = G(x, y)$ is a   $2\times2$ matrix-valued function and
  $\Pi=\Pi(x,y)$ is a   $1\times2$ vector-valued   function,  normalized by
\begingroup 
\[
 LG(\cdot,y)+\nabla\Pi(\cdot,y)=-\delta_y I_2,\qquad
 \operatorname{div}G(\cdot,y)=0,\qquad
 G(\cdot,y)|_{\partial\Omega}=0,\qquad
 \int_\Omega\Pi(x,y)\,dx=0.
\]
\endgroup
Moreover, the following properties hold.
\begin{description}
	\item	{\rm(i)} $G$ and $G^*$ are continuous in
	\(	\{(x,y)\in\Omega\times\Omega:\ x\neq y\}\)
 and satisfy
	\begingroup
\begin{equation}\label{CK32}
		G(x,y)=G^*(y,x)^\top,\quad \text{for any } x, y\in\Om,\quad x\neq y;
	\end{equation}
\endgroup
\end{description}
\begin{description}
	\item	{\rm(ii)} for any $y\in\Omega$ and $R>0$,
	\begingroup
\begin{equation}\label{CK38}
		(G(\cdot,y),\Pi(\cdot,y))
		\in
		C^1(\Omega\setminus B_R(y))^{2\times 2}\times C(\Omega\setminus B_R(y))^2;
	\end{equation}
\endgroup
\end{description}
\begin{description}
	\item	{\rm(iii)} for any $x,y\in\Omega$ with $x\neq y$,
	\begingroup
\begin{equation*}%\label{CK39}
		|D_xG(x,y)|+|\Pi(x,y)|
		\le
		C|x-y|^{-1},
	\end{equation*}
\endgroup
   where the constant $C$ depends only on $\lambda$,  $\operatorname{diam}(\Omega)$, the
Dini modulus of the coefficients, and the $C^{1,\mathrm{Dini}}$ character of
$\partial\Omega$.
%{ Here this boundary character includes a common graph radius $R_*>0$
%and a modulus $\varrho_*$ for the graph derivatives satisfying
%$\int_0^1\varrho_*(s)\,ds/s<\infty$.}
 \end{description}
\end{lemma}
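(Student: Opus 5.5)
Since this lemma is quoted from \cite{ChoiKim_2019}, the plan below only reconstructs its standard proof for our setting. The strategy is the usual approximation construction of the Green function, followed by local regularity estimates that the Dini mean oscillation assumption makes available.

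\emph{Construction.} Fix $y\in\Omega$ and $\varepsilon>0$. Let $(G_\varepsilon(\cdot,y),\Pi_\varepsilon(\cdot,y))\in W^{1,2}_0\times L^2$ be the weak solution, with mean-zero pressure, of
\[
LG_\varepsilon+\nabla\Pi_\varepsilon=-\frac{1}{|B_\varepsilon(y)\cap\Omega|}\chi_{B_\varepsilon(y)\cap\Omega}I_2,\qquad \operatorname{div}G_\varepsilon=0.
\]
Existence follows from Lax--Milgram on $H^1_{0,\sigma}$. The pressure is recovered by de Rham's theorem, and it is bounded through Lemma \ref{div0}, which gives the inf-sup condition on the Lipschitz domain $\Omega$.

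The key uniform estimate is
\[
\|D G_\varepsilon(\cdot,y)\|_{L^{2,\infty}(\Omega)}+\|\Pi_\varepsilon(\cdot,y)\|_{L^{2,\infty}(\Omega)}\le C,
\]
independent of $\varepsilon$. I would obtain it by duality. Test the equation against the solution of the adjoint problem $L^*v+\nabla q=f$, with $f$ supported in $\Omega\setminus B_r(y)$. Then use the local boundedness of $v$ near $y$ together with the energy bound $\|\nabla v\|_{L^2}\le C\|f\|_{L^{1}\text{-type}}$; this is where the 2D logarithm must be handled. Passing $\varepsilon\to0$ along a subsequence gives $(G,\Pi)$ with $DG(\cdot,y)$ and $\Pi(\cdot,y)$ in weak $L^2$.

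\emph{Representation and symmetry.} Construct $(G^*_\varepsilon,\Pi^*_\varepsilon)$ for $L^*$ in the same way and test each approximate problem against the other. This yields
\[
\fint_{B_\varepsilon(x)}G_\sigma(\cdot,y)=\fint_{B_\sigma(y)}G^*_\varepsilon(\cdot,x)^\top .
\]
Letting $\sigma,\varepsilon\to0$, and using the continuity away from the pole proved in the next step, gives \eqref{CK32}. The representation formula for $u$ follows in the same way: test the weak formulation of \eqref{Stokes} against $G_\varepsilon(\cdot,y)$, integrate the divergence term against $\Pi_\varepsilon$, and let $\varepsilon\to0$ at Lebesgue points $y$.

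\emph{Regularity and pointwise bound.} For \eqref{CK38} and (iii), I would use the $C^1$ estimate for Stokes systems with coefficients of Dini mean oscillation in $C^{1,\mathrm{Dini}}$ domains, including near the boundary. Away from the pole, $(G(\cdot,y),\Pi(\cdot,y))$ solves the homogeneous system, so it is $C^1\times C$ there.

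For the quantitative bound, set $r=|x-y|/2$. The scale-invariant local estimate on $B_r(x)\cap\Omega$ gives
\[
|D_xG(x,y)|+|\Pi(x,y)-c|\le C\fint_{B_r(x)\cap\Omega}\bigl(|DG(\cdot,y)|+|\Pi(\cdot,y)-c|\bigr),
\]
for a suitable constant $c$. By the weak-$L^2$ bound, the right-hand side is at most $Cr^{-2}\cdot r\cdot C=C/r$.

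\emph{Main obstacle.} The step I expect to be hardest is removing the constant $c$ in the pressure bound. Since $\Pi$ is only normalized by $\int_\Omega\Pi\,dx=0$, I would first try comparing averages over dyadic annuli around $y$ and summing. This works cleanly only because the global weak-$L^2$ bound on $\Pi$ already controls $|\Pi|$ on average at every scale.
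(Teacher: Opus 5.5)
The paper does not prove this lemma; it quotes it from \cite{ChoiKim_2019}. Your architecture is the standard one, and those steps are fine: approximate Green functions, cross-testing $G_\sigma(\cdot,y)$ against $G^*_\varepsilon(\cdot,x)$ for \eqref{CK32}, testing against $G_\varepsilon(\cdot,y)$ for the representation, and local $C^1$ estimates for Stokes systems with DMO coefficients in $C^{1,\mathrm{Dini}}$ domains for \eqref{CK38} and (iii).

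The gap is the one estimate that actually carries (iii), the $\varepsilon$-uniform bound $\|DG_\varepsilon(\cdot,y)\|_{L^{2,\infty}}+\|\Pi_\varepsilon(\cdot,y)\|_{L^{2,\infty}}\le C$. The mechanism you describe does not give it in two dimensions. An energy bound of the form $\|\nabla v\|_{L^2}\le C\|f\|_{L^1}$ is false for $n=2$, because $L^1\not\subset H^{-1}(\Omega)$. Already the normalized indicator of $B_\varepsilon(y)$ has $H^{-1}$ norm of order $(\log(1/\varepsilon))^{1/2}$ (test with a truncated logarithm). Run the duality honestly: take $L^*v+\nabla q=\operatorname{div}F$ with $F\in L^2$ supported in $\Omega\setminus B_r(y)$, local boundedness of $v$ near $y$, and $v\in H^1_0(\Omega)$. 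Controlling the mean of $|v|^2$ on $B_r(y)$ costs a logarithm, and you obtain only
\[
\|DG_\varepsilon(\cdot,y)\|_{L^2(\Omega\setminus B_r(y))}\le C\bigl(1+\log(\operatorname{diam}\Omega/r)\bigr)^{1/2}.
\]
This is sharp for the Laplacian, but it does not imply a weak-$L^2$ bound. Fed into the scale-invariant $C^1$ estimate, it yields only $|D_xG(x,y)|+|\Pi(x,y)|\le C|x-y|^{-1}\bigl(1+\log(\operatorname{diam}\Omega/|x-y|)\bigr)^{1/2}$, which is (iii) with a logarithmic loss. Saying that this is where the logarithm must be handled names the difficulty without resolving it.

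What is missing is a scale-invariant endpoint substitute for the Sobolev embedding that fails in the plane. One way is to test with $F\in L^{2,1}(\Omega)$, whose dual is $L^{2,\infty}$. Solve $L^*v+\nabla q=\operatorname{div}F$, $\operatorname{div}v=0$, $v\in W^{1,2}_0$, and use two facts:
\begin{itemize}
\item $\|\nabla v\|_{L^{2,1}}\le C\|F\|_{L^{2,1}}$, by real interpolation of the $W^{1,p}$ estimates for $p$ on both sides of $2$, which are available under the present hypotheses.
\item The two-dimensional endpoint embedding $\|v\|_{L^\infty}\le C\|\nabla v\|_{L^{2,1}}$. It follows by applying $|v(x)|\le C\int|\nabla v(z)|\,|x-z|^{-1}dz$ to the zero extension of $v$ and using $|\cdot|^{-1}\in L^{2,\infty}(\mathbb R^2)$.
\end{itemize}
Then $\bigl|\int_\Omega DG_\varepsilon(\cdot,y):F\,dx\bigr|=\bigl|\frac{1}{|B_\varepsilon(y)\cap\Omega|}\int_{B_\varepsilon(y)\cap\Omega}v\,dx\bigr|\le C\|F\|_{L^{2,1}}$, which is the uniform weak-$L^2$ bound on $DG_\varepsilon$. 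For the pressure, pair $\Pi_\varepsilon$ with $g-g_\Omega$ and use a Bogovskii operator bounded from $L^{2,1}$ into $W^{1,(2,1)}_0$.

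In particular, for $|x-y|=2r$ this gives $\|DG(\cdot,y)\|_{L^1(\Omega\cap B_r(x))}+\|\Pi(\cdot,y)\|_{L^1(\Omega\cap B_r(x))}\le Cr$, which is exactly the input your local $C^1$ step needs. Finally, the step you single out as the main obstacle is not one: once the zero-mean $\Pi(\cdot,y)$ is bounded in $L^{2,\infty}(\Omega)$, you simply take $c=0$.
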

\begin{remark}
 Indeed,  Lemma \ref{thm:CK37} holds  for $C^2$ domains since  they possess $C^{1, \mathrm{Dini}}$ boundary. \end{remark}

\begin{lemma}[{\cite[Theorem IV.5.1]{G_1998}}]
	\label{thm:Galdi-local-boundary-Stokes}
	Let $\Omega\subset\mathbb R^n$, $n\ge2$, be an arbitrary domain with a boundary portion
	$\sigma$ of class $C^{m+2}$, $m\ge0$ and  $\Omega^{0}$ be any bounded subdomain of $\Omega$
	with
	\[
	\partial\Omega^{0}\cap\partial\Omega=\sigma.
	\]
Moreover, let
	\[
	u\in W^{1,q}(\Omega^{0}),\quad P\in L^{q}(\Omega^{0}),\quad 1<q<\infty,
	\]
	be such that
	\[
	(\nabla u,\nabla\phi)
	=
	-\langle f,\phi\rangle
	+
	(P,\nabla\cdot\phi), 
	\quad
	\forall\,\phi\in C_{0}^{\infty}(\Omega^{0}),
	\]
	\[
	(u,\nabla\phi)=0, 
	\qquad
	\forall\,\phi\in C_{0}^{\infty}(\Omega^{0}),
	\]
	and
	\[
	u=u^{*}\quad\text{on }\sigma .
	\]
	
	Then, if
	\[
	f\in W^{m,q}(\Omega^{0}),
	\quad
	u^{*}\in W^{m+2-\frac1q,q}(\sigma),
	\]
we have 
	\[
	u\in W^{m+2,q}(\Omega_{0}),
	\quad
	P\in W^{m+1,q}(\Omega_{0}),
	\]
	for any $\Omega_{0}$ satisfying

\begin{description}
	\item	{\rm(i)}  $\Omega_{0}\subset\Omega^{0}$;
    \end{description}
\begin{description}\item{\rm(ii)} {  $\partial\Omega_0\cap\partial\Omega$} is a strictly interior subregion of $\sigma$.
\end{description}
\begin{description}\item{\rm(iii)}
{  $\operatorname{dist}(\overline{\Omega_0},
 \overline{\partial\Omega^0\setminus\partial\Omega})>0$.}
\end{description}
	
Furthermore, there exists  positive constant 	$C=C(m,n,q,\Omega^{0},\Omega_{0})$ such that
	\begin{equation*}%\label{Galdi-IV-5-23}
		\|u\|_{W^{m+2,q}(\Omega_{0})}
		+
		\|P\|_{W^{m+1,q}(\Omega_{0})}
		\le
		C\Big(
		\|f\|_{W^{m,q}(\Omega^{0})}
		+
		\|u^{*}\|_{W^{m+2-1/q,q}(\sigma)}
		+
		\|u\|_{W^{1,q}(\Omega^{0})}
		+
		\|P\|_{L^{q}(\Omega^{0})}
\Big).
\end{equation*}
\end{lemma}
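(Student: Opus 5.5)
The plan is to reduce the local statement to the global $W^{m+2,q}$ theory for the Stokes system with prescribed divergence and nonhomogeneous Dirichlet data on a bounded domain of class $C^{m+2}$ (Cattabriga's estimates; Galdi, Chapter IV.6). This reduction uses a cut-off function and an intermediate smooth domain. Higher regularity then follows by induction on $m$ through a finite chain of nested domains.

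\emph{Geometric step.} Using (ii)--(iii), I would first construct a bounded domain $\Omega'$ of class $C^{m+2}$ with
\[
\Omega_0\subset\Omega'\subset\Omega^0,\qquad
\partial\Omega'\cap\partial\Omega\subset\sigma,
\]
such that $\partial\Omega'$ coincides with $\sigma$ on a neighbourhood of $\overline{\partial\Omega_0\cap\partial\Omega}$. This is done by locally flattening $\sigma$ in finitely many $C^{m+2}$ graph charts and closing the domain off smoothly inside $\Omega^0$. Next I would choose a cut-off $\zeta\in C^\infty(\mathbb R^n)$ with $\zeta\equiv1$ on $\Omega_0$ and
\[
\operatorname{supp}\zeta\cap\overline{\Omega'}\ \text{at positive distance from}\ \partial\Omega'\setminus\sigma .
\]
Condition (iii) is exactly what allows this.

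\emph{Localized problem.} Set $w=\zeta u$ and $\pi=\zeta P-(\zeta P)_{\Omega'}$. A direct computation in the weak formulation shows that in $\Omega'$
\[
-\Delta w+\nabla \pi=F:=\zeta f-2\nabla\zeta\cdot\nabla u-u\Delta\zeta+P\nabla\zeta,
\qquad
\operatorname{div} w=g:=\nabla\zeta\cdot u,
\]
with boundary values $w=\zeta u^*$ on $\sigma$ and $w=0$ on the rest of $\partial\Omega'$. The compatibility condition $\int_{\Omega'} g\,dx=\int_{\partial\Omega'}w\cdot n\,dS$ holds automatically by the divergence theorem applied to $w$. For $m=0$ we have $F\in L^q$ and $g\in W^{1,q}$, and the datum $\zeta u^*$ lies in $W^{2-1/q,q}(\partial\Omega')$ because $\zeta$ is smooth. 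The global theory therefore gives
\[
\|w\|_{W^{2,q}(\Omega')}+\|\pi\|_{W^{1,q}(\Omega')}
\le C\big(\|F\|_{L^q}+\|g\|_{W^{1,q}}+\|\zeta u^*\|_{W^{2-1/q,q}}\big),
\]
and the right-hand side is bounded by $\|f\|_{L^q(\Omega^0)}+\|u^*\|_{W^{2-1/q,q}(\sigma)}+\|u\|_{W^{1,q}(\Omega^0)}+\|P\|_{L^q(\Omega^0)}$. One uniqueness point needs care. The pair $(w,\pi)$ obtained by localization is a priori only a $W^{1,q}\times L^q$ weak solution, so it must be identified with the regular solution supplied by the global theory. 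This follows from uniqueness of $W^{1,q}$ weak solutions of the Stokes problem on $C^{2}$ domains for every $1<q<\infty$, which I will quote from Galdi, Chapter IV.6. Restricting to $\Omega_0$, where $\zeta\equiv1$, gives the case $m=0$, after recovering $P$ from $\pi$ up to an $L^q$-controlled constant.

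\emph{Induction on $m$.} Fix nested domains $\Omega_0=D_m\subset D_{m-1}\subset\dots\subset D_0\subset\Omega^0$, each satisfying (i)--(iii) relative to the next larger one. Assume $u\in W^{k+1,q}(D_{k-1})$ and $P\in W^{k,q}(D_{k-1})$ with the corresponding bound. Then $F\in W^{k,q}$ and $g\in W^{k+1,q}$ on the cut-off region, and the global $W^{k+2,q}$ estimate upgrades the regularity on $D_k$. After $m+1$ steps we obtain the asserted estimate with $C=C(m,n,q,\Omega^0,\Omega_0)$.

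The main obstacle is the geometric step. The intermediate domain $\Omega'$ must be genuinely of class $C^{m+2}$, contain $\Omega_0$, and share boundary with $\partial\Omega$ only along $\sigma$, so that the global boundary-regularity theorem applies and its constant depends only on $\Omega^0$ and $\Omega_0$. I would handle this by working in the finitely many graph charts of $\sigma$ and gluing the pieces with a smooth partition of unity. The analytic steps are then routine.
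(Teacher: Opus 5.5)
The paper does not prove this lemma; it quotes Galdi's Theorem IV.5.1. Galdi's argument works directly at the boundary: flatten $\sigma$ in small charts, treat the transformed system as a perturbation of the half-space Stokes problem, absorb the small variable-coefficient terms, and induct on $m$. In Galdi's book the global theorem of Section IV.6 is then obtained from this local estimate and the interior one by a covering argument.

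Your proposal runs in the opposite direction, and it is correct. The following steps all go through:
\begin{itemize}
\item the localized system for $(\zeta u,\pi)$ and the automatic compatibility condition;
\item the identification of the weak localized solution with the strong one;
\item the recovery of $P$ up to a constant bounded by $\|P\|_{L^q(\Omega^0)}$;
\item the induction on $m$ through nested domains.
\end{itemize}
There is one sign slip: the weak form encodes $-\Delta u+\nabla P=-f$, so the first term of $F$ should be $-\zeta f$. Because of the logical order in Galdi's book, you should not cite Galdi IV.6 for the global estimate or for uniqueness. Take the global $W^{m+2,q}$ theorem from an independent source, such as Cattabriga's potential-theoretic proof, the Agmon--Douglis--Nirenberg theory for the Stokes system, or Amrouche--Girault in arbitrary dimension. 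Uniqueness of $W^{1,q}_0$ weak solutions you can prove yourself by duality, testing against the $W^{2,q'}$ solution of the Stokes problem with data $h\in C_0^\infty$.

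Your geometric step is harder than it needs to be. A partition of unity glues functions, not domains, so one global $C^{m+2}$ domain $\Omega'$ would require gluing defining functions and smoothing corners. You can avoid this. Cover $\overline{\Omega_0}$ by finitely many small boundary charts and interior balls. In each chart, build a local $C^{m+2}$ domain by transplanting a fixed smooth model domain with the graph map $z\mapsto(z',z_n+\gamma(z'))$, as in Lemma~\ref{lm:geompatch}; that lemma is written for $n=2$, but the construction works in any dimension. Then run your cut-off argument on each piece and sum.

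Your route buys brevity and a transparent constant, which depends only on the global-theory constants of the auxiliary domains and on the cut-offs. This is exactly what the paper needs when it applies the lemma on $\widetilde D_4(x_b,r)$ uniformly in $(x_b,r)$. There $\widetilde D_4(x_b,r)$ is already a complete $C^2$ domain, so your argument applies with $\Omega'=\widetilde D_4(x_b,r)$ and a cut-off supported in $B(0,2)$. Uniformity then reduces to uniformity of the global constant over domains of uniformly bounded $C^2$ character. Galdi's route is the foundational one: it needs only the half-space theory and yields the global theorem as a corollary.
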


Moreover, we state the standard interior \(W^{2,p}\) estimate for
homogeneous Stokes systems (see \cite[Lemma 2.12]{Tsai2018}).

\begin{lemma}
	\label{lm:stokes-int-w2p-lp}
	Let \(1<q<\infty\). Suppose that
	\[
	u\in W^{1,1}_{\rm loc}(B(0,1))^2,\quad
	P\in L^1_{\rm loc}(B(0,1))
	\]
	satisfy 
    \[
	(\nabla u,\nabla\phi)
	=
	(P,\nabla\cdot\phi), 
	\quad
	\forall\,\phi\in C_{0}^{\infty}(B(0,1)),
	\] and \[
	(u,\nabla\phi)=0, 
	\quad
	\forall\,\phi\in C_{0}^{\infty}(B(0,1)).
	\]
	Assume also that \(u\in L^q(B(0,1))^2\),  then
	\[
	u\in W^{2,q}(B(0,{1/2}))^2,\quad \nabla P\in L^q(B(0,{1/2}))^2,
	\]
	and
\begin{equation*}%\label{stokes-int-w2p-lp-est}
		\|u\|_{W^{2,q}(B(0,{1/2}))}
		+
		\|\nabla P\|_{L^q(B(0,{1/2}))}
\le
C(q)\|u\|_{L^q(B(0,1))}.
\end{equation*}
\end{lemma}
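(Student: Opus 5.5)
The plan is to reduce Lemma \ref{lm:stokes-int-w2p-lp} to the classical fact that a weak solution of the homogeneous Stokes system is smooth, with a scale-invariant interior estimate. We first mollify: for $\varepsilon<1/4$ set $u_\varepsilon=u*\eta_\varepsilon$ and $P_\varepsilon=P*\eta_\varepsilon$ on $B(0,1-\varepsilon)$. Since the equations are linear with constant coefficients, the pair $(u_\varepsilon,P_\varepsilon)$ satisfies $-\Delta u_\varepsilon+\nabla P_\varepsilon=0$ and $\operatorname{div}u_\varepsilon=0$ classically. Taking the divergence gives $\Delta P_\varepsilon=0$, and hence $\Delta^2 u_\varepsilon=\Delta\nabla P_\varepsilon=0$. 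Thus $u_\varepsilon$ is biharmonic and smooth.

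Next we derive estimates depending only on $\|u\|_{L^q}$. For a biharmonic function, the mean value and Cauchy-type estimates on balls give, for $B(x,r)\subset B(0,1-\varepsilon)$ and any $k$,
\[
|\nabla^k u_\varepsilon(x)|\le C(k)\,r^{-k-2/q}\,\|u_\varepsilon\|_{L^q(B(x,r))}.
\]
These can be obtained from the Poisson-type representation of biharmonic functions, or from interior $L^2$ Caccioppoli iteration combined with Sobolev embedding, which passes from $L^q$ to $L^\infty$ on smaller balls. Applying this with $r=1/8$ for $x\in B(0,5/8)$ bounds $\|\nabla^2u_\varepsilon\|_{L^\infty(B(0,5/8))}$ and $\|\nabla^3 u_\varepsilon\|_{L^\infty(B(0,5/8))}$ by $C\|u_\varepsilon\|_{L^q(B(0,3/4))}\le C\|u\|_{L^q(B(0,1))}$. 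The bound on the pressure gradient then follows directly from the equation, since $\nabla P_\varepsilon=\Delta u_\varepsilon$.

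Finally we pass to the limit $\varepsilon\to0$. The family $u_\varepsilon$ is bounded in $C^3(\overline{B(0,5/8)})$ and converges to $u$ in $L^1_{\rm loc}$. By Arzelà--Ascoli, $u\in C^2(\overline{B(0,1/2)})$ with the same bounds. Likewise $\nabla P_\varepsilon=\Delta u_\varepsilon\to\Delta u$ uniformly, and since $P_\varepsilon\to P$ in distributions we get $\nabla P=\Delta u$. Because $|B(0,1/2)|$ is finite, the $L^\infty$ bounds imply the $W^{2,q}(B(0,1/2))$ and $L^q$ bounds, with constants depending only on $q$.

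The main obstacle is the low regularity of the hypotheses: we only have $u\in W^{1,1}_{\rm loc}$ and $P\in L^1_{\rm loc}$, while the right-hand side involves only $\|u\|_{L^q}$ and no norm of $P$. The mollification step handles this. The pressure is eliminated entirely through biharmonicity, and we never need Caccioppoli estimates that involve $P$, since the biharmonic equation for $u_\varepsilon$ alone controls everything. The remaining point to check carefully is the constant in the biharmonic interior estimate. It follows from scaling a unit-ball estimate of $\|v\|_{C^3(B_{1/2})}$ by $\|v\|_{L^1(B_1)}$ for biharmonic $v$, which in turn follows from the Almansi decomposition $v=h_1+|x|^2h_2$ with $h_1,h_2$ harmonic, or from standard elliptic regularity for $\Delta^2$.
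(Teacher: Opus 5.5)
Your proof is correct, but it takes a genuinely different route from the paper's. The paper does not prove the interior regularity at all. It quotes Tsai's interior estimate for very weak Stokes solutions (Lemma 2.12 there, with zero force and radii $1/2$ and $1$), which already gives $\|\nabla u\|_{L^q(B(0,1/2))}+\|\nabla^2u\|_{L^q(B(0,1/2))}\le C(q)\|u\|_{L^1(B(0,1))}$. It then only adds $\nabla P=\Delta u$ in distributions and H\"older's inequality on the unit ball.

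You instead prove that interior estimate yourself. Mollification reduces to classical solutions, and taking the divergence eliminates the pressure: $\Delta P_\varepsilon=0$, hence $\Delta^2u_\varepsilon=0$. Scale-invariant biharmonic estimates then give uniform $C^3$ bounds on $B(0,5/8)$ in terms of $\|u\|_{L^q(B(0,1))}$, and Arzel\`a--Ascoli together with $\nabla P=\Delta u$ finishes the argument. The citation buys brevity. Your route makes transparent why no norm of $P$ is needed on the right-hand side, and it yields $C^k$ bounds for every $k$, not just $W^{2,q}$ bounds.

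One point needs tightening: the justification of the unit-ball biharmonic estimate. The Almansi decomposition $v=h_1+|x|^2h_2$ does not by itself control $h_1,h_2$ by $\|v\|_{L^1}$. The $L^2$ Caccioppoli route also needs the usual absorption/iteration step when $q<2$. The cleanest justification is the exact biharmonic mean-value identity coming from Pizzetti's formula. In two dimensions, $\frac{1}{|\partial B(x,s)|}\int_{\partial B(x,s)}v\,dS=v(x)+\frac{s^2}{4}\Delta v(x)$ whenever $\Delta^2v=0$ near $\overline{B(x,s)}$. Hence, for a smooth radial $\omega$ supported in $B(0,1)$ with $\int\omega\,dz=1$ and $\int\omega(|z|)|z|^2\,dz=0$, and $\omega_r(z)=r^{-2}\omega(z/r)$, one has $v(x)=(v*\omega_r)(x)$ whenever $B(x,r)$ lies in the domain. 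Differentiating the kernel then gives $|\nabla^kv(x)|\le C(k)\,r^{-k-2}\|v\|_{L^1(B(x,r))}$, which is exactly the bound your argument uses (after H\"older).
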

{\it Proof.} Applying \cite[Lemma 2.12, (2.67) and (2.69)]{Tsai2018}
with zero forcing and radii $1/2$ and $1$, we obtain
\[
\begin{aligned}
\|\nabla u\|_{L^q(B(0,1/2))}+\|\nabla^2u\|_{L^q(B(0,1/2))}
&\le C(q)\|u\|_{L^1(B(0,1))}\\
&\le C(q)\|u\|_{L^q(B(0,1))}.
\end{aligned}
\]
Since $\nabla P=\Delta u$ in distributions,
\[
\|\nabla P\|_{L^q(B(0,1/2))}
\le C\|\nabla^2u\|_{L^q(B(0,1/2))}.
\]
Together with $\|u\|_{L^q(B(0,1/2))}\le\|u\|_{L^q(B(0,1))}$,
this proves Lemma \ref{lm:stokes-int-w2p-lp}.\hfill $\Box$

Arguing as in the proof of   \cite[Lemma 5.1]{wei_2025}, we 
obtain the following    backward transport result in  bounded domain. 
\begin{lemma}\label{lem:transport-bdd}
	Let $\Om\subset\mr^2$ be a bounded {  smooth} domain, $T>0$ and   $1<p<\infty$.  Assume that
	\begin{equation*}\la{nabaru}
		s^{1/2}\nabla  u\in L^2\bigl(0, T;L^\infty(\Omega)\bigr),
		\quad
		\div  u=0,
		\quad
	 u|_{\partial\Omega}=0.
	\end{equation*}
	Then, for fixed $0<t\leq T$ and any $\varphi\in W^{1,p}_0(\Omega)$, the  problem
\begin{equation*}\label{eq:back-transport}
		\left\{
		\begin{aligned}
			&\partial_s\phi+ u\cdot\nabla\phi=0,
			&& (x,s)\in\Omega\times(0,t],\\
			&\phi(t,x)=\varphi(x),&& x\in\Om,
		\end{aligned}
		\right.
	\end{equation*}
	admits a unique solution
	\[
	\phi\in  C\bigl((0,t];W^{1,p}_0(\Omega)\bigr),
	\]
	and there exists a constant $C=C(p)$ such that for any $s\in(0,t]$,
	\begin{equation*}\label{eq:transport-est-2}
		\|\nabla\phi(\cdot,s)\|_{L^p(\Omega)}
		\le
		C\exp\Bigl(
		|\ln(t/s)|^{1/2}
		\|\tau^{1/2}\nabla   u\|_{L^2(0,t;L^\infty(\Omega))}
		\Bigr)\|\nabla\varphi\|_{L^p(\Omega)}.
	\end{equation*}
\end{lemma}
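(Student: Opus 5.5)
The plan is to follow \cite[Lemma 5.1]{wei_2025}: solve the transport equation by characteristics, then differentiate the flow to control $\nabla\phi$.

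\textbf{Existence via the flow map.} First I will mollify $u$ in $x$ and cut off near $s=0$, so that the equation has a smooth, divergence-free, boundary-vanishing field, and derive the estimate uniformly in the regularization. For fixed $0<s\le t$, define the flow $X(\tau;s,x)$ by $\partial_\tau X=u(X,\tau)$ with $X(s;s,x)=x$, for $\tau\in[s,t]$. On $[s,t]$ we have
\[
\int_s^t\|\nabla u(\tau)\|_{L^\infty}\,d\tau\le \Bigl(\int_s^t\tau^{-1}d\tau\Bigr)^{1/2}\|\tau^{1/2}\nabla u\|_{L^2(0,t;L^\infty)}=|\ln(t/s)|^{1/2}\|\tau^{1/2}\nabla u\|_{L^2(0,t;L^\infty)},
\]
so $u\in L^1(s,t;W^{1,\infty})$. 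Hence the flow is well defined and bi-Lipschitz. Since $u|_{\partial\Omega}=0$, the flow maps $\Omega$ onto itself and fixes $\partial\Omega$. Since $\div u=0$, it is measure preserving. Set $\phi(x,s)=\varphi(X(t;s,x))$; this solves the backward problem.

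\textbf{Gradient bound.} By the chain rule, $\nabla\phi(x,s)=(\nabla_xX(t;s,x))^{\top}\nabla\varphi(X(t;s,x))$. Gronwall applied to $\partial_\tau\nabla_xX=\nabla u(X)\nabla_xX$ gives
\[
|\nabla_xX(t;s,\cdot)|\le \exp\Bigl(\int_s^t\|\nabla u\|_{L^\infty}d\tau\Bigr).
\]
Because $x\mapsto X(t;s,x)$ preserves measure, $\|\nabla\varphi\circ X\|_{L^p}=\|\nabla\varphi\|_{L^p}$. Together with the Cauchy--Schwarz bound above, this yields the stated estimate with $C$ depending only on $p$ (in fact $C=1$ suffices). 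The boundary condition $\phi=0$ on $\partial\Omega$ holds because $X(t;s,\cdot)$ maps $\partial\Omega$ to itself and $\varphi\in W^{1,p}_0$; alternatively, approximate $\varphi$ by $C_0^\infty$ functions, whose compositions stay compactly supported.

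\textbf{Continuity and uniqueness.} Continuity $\phi\in C((0,t];W^{1,p}_0)$ follows from density of smooth $\varphi$ combined with the uniform bound on compact subintervals $[s_0,t]$, on which $u\in L^1W^{1,\infty}$. Uniqueness follows from DiPerna--Lions theory, since $u\in L^1(s_0,t;W^{1,\infty})$ is divergence free and tangent (zero) on the boundary. Concretely, the difference $w$ of two solutions satisfies $\frac{d}{ds}\|w\|_{L^p}^p=0$: integrating by parts with $\div u=0$ and $u|_{\partial\Omega}=0$ leaves no boundary term.

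\textbf{Main obstacle.} The delicate step is passing to the limit in the regularization near the boundary. Mollification must keep $u=0$ on $\partial\Omega$, $\div u=0$, and the weighted $L^2L^\infty$ bound on the gradient. I would first extend $u$ by zero to $\mathbb R^2$. This extension is Lipschitz on each slab $[s_0,t]$, since $u$ vanishes on $\partial\Omega$ and is divergence free in $\mathbb R^2$. The ODE is then well posed directly without mollifying: Cauchy--Lipschitz for Lipschitz-in-$x$, $L^1$-in-time fields gives the flow, invariance of $\overline\Omega$ holds since $u=0$ outside $\Omega$, and $\det\nabla X=1$ a.e. by Liouville. This avoids the approximation altogether.
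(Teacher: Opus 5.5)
Your proposal is correct and follows the same route as the paper. The paper gives no separate proof and simply defers to the characteristics argument of \cite[Lemma 5.1]{wei_2025}, whose key step is exactly your bound $\int_s^t\|\nabla u\|_{L^\infty}\,d\tau\le|\ln(t/s)|^{1/2}\|\tau^{1/2}\nabla u\|_{L^2(0,t;L^\infty)}$. Your zero extension of $u$ to $\mathbb R^2$ is a clean way to supply the bounded-domain details the paper leaves implicit, namely invariance of $\Omega$ and measure preservation.

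One step is stated too quickly. Continuity of $\phi$ in $W^{1,p}_0$ is not automatic even for smooth $\varphi$ when $u$ is merely Lipschitz, because you also need $s\mapsto\nabla_xX(t;s,\cdot)$ to be continuous in $L^p$. This follows from the group property $X(t;s,\cdot)=X(t;s',\cdot)\circ X(s';s,\cdot)$ together with $\|\nabla_xX(s';s,\cdot)-I\|_{L^\infty}\to0$ as $s\to s'$.
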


Finally, we also need the    one-dimensional convolution estimate. 
\begin{lemma}[{  {\cite[Lemma 5.2]{wei_2025}}}]\label{lem:kernel}
	Let $T>0$ and $f:(0,T]\rightarrow [0, +\infty]$ be such that $f\in L^4(0,T).$ Let $A>0$ and 
	\[
	F(t)\triangleq\frac1t \int_0^t f(s)e^{A|\log(t/s)|^{1/2}}\,ds,
	\quad   0<t\leq T.
	\]
	Then $F\in L^4(0,T)$ and
\[
	\|F\|_{L^4(0,T)}\le C(A) \|f\|_{L^4(0,T)},
	\]
where  $C(A)>0$  is a constant depending only on $A$.
\end{lemma}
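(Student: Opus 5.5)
The statement to prove is Lemma \ref{lem:kernel}, the one-dimensional convolution estimate. The plan is to rewrite $F$ as a multiplicative convolution on $(0,\infty)$ and apply Young's inequality for the Haar measure $ds/s$.

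\textbf{Change of variables.} Extend $f$ by zero to $(T,\infty)$ and substitute $s=t\sigma$ with $\sigma\in(0,1]$. This gives
\[
F(t)=\int_0^1 f(t\sigma)\,e^{A|\ln\sigma|^{1/2}}\,d\sigma .
\]
By Minkowski's integral inequality,
\[
\|F\|_{L^4(0,T)}\le \int_0^1 e^{A|\ln\sigma|^{1/2}}\,\|f(\cdot\,\sigma)\|_{L^4(0,T)}\,d\sigma .
\]

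\textbf{Dilation bound.} For $0<\sigma\le1$, the change of variables $t\mapsto t\sigma$ gives
\[
\|f(\cdot\,\sigma)\|_{L^4(0,T)}^4=\sigma^{-1}\int_0^{T\sigma}|f|^4\,d\tau\le \sigma^{-1}\|f\|_{L^4(0,T)}^4 .
\]
Hence $\|f(\cdot\,\sigma)\|_{L^4(0,T)}\le\sigma^{-1/4}\|f\|_{L^4(0,T)}$, and therefore
\[
\|F\|_{L^4(0,T)}\le \Bigl(\int_0^1 \sigma^{-1/4}e^{A|\ln\sigma|^{1/2}}\,d\sigma\Bigr)\|f\|_{L^4(0,T)} .
\]

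\textbf{Finiteness of the constant.} Substitute $\sigma=e^{-r}$ with $r\in(0,\infty)$. The integral becomes
\[
\int_0^\infty e^{-3r/4+A r^{1/2}}\,dr .
\]
Since $Ar^{1/2}\le r/4+A^2$ by Young's inequality ($ab\le a^2/4+b^2$ with $a=r^{1/2}$, $b=A$), this is bounded by $e^{A^2}\int_0^\infty e^{-r/2}\,dr=2e^{A^2}$. So $C(A)=2e^{A^2}$ works. Measurability of $F$, with values in $[0,\infty]$, follows from Tonelli's theorem, since the integrand is nonnegative and jointly measurable.

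\textbf{Main point.} The only real step is noticing that the sub-exponential weight $e^{A|\ln(t/s)|^{1/2}}$ is beaten by the gain $\sigma^{3/4}$ coming from the $1/t$ averaging in $L^4$. This is exactly where the exponent $1/2<1$ in $|\ln|^{1/2}$ matters; a weight like $e^{A|\ln\sigma|}=\sigma^{-A}$ would fail for $A\ge 3/4$. No other obstacle is expected.
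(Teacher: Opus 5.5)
Your proof is correct and complete. The substitution $s=t\sigma$, Minkowski's integral inequality, the dilation bound $\|f(\cdot\,\sigma)\|_{L^4(0,T)}\le\sigma^{-1/4}\|f\|_{L^4(0,T)}$ and the estimate $\int_0^\infty e^{-3r/4+A\sqrt r}\,dr\le 2e^{A^2}$ all check out, and they give a constant $C(A)=2e^{A^2}$ that is independent of $T$.

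The paper does not prove this lemma itself; it quotes it from \cite[Lemma 5.2]{wei_2025}. Your argument is the standard proof via Young's inequality on $(0,\infty)$ with the measure $ds/s$. It rests on the same substitution $s=te^{-r}$ that the paper uses in the proof of Lemma \ref{prop:stability-bdd} to show $\int_0^t e^{\frac43 C|\log(t/s)|^{1/2}}\,ds<\infty$.
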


\section{A priori estimates}\la{3}

Let $T>0$ be fixed. In this section, we establish necessary a priori estimates for smooth solutions $(\rho, u, P)$ to the Dirichlet problem \eqref{11}{  --}\eqref{14} on $\Omega \times [0,T]$ with initial data $(\rho_0, u_0)$ satisfying \eqref{15}.
{ Throughout this section, we choose $P_\Om(t)=0$. Subtracting the spatial mean does not change \eqref{11}; full pressure norms below refer to this representative.}

\subsection{Lower order estimates
}

First,  the transport equation $\eqref{11}_1$  and $\eqref{11}_3$ give the following well-known estimate of density.

\begin{lemma}\label{Lemma 31}{\rm(\cite[Theorem 2.1]{Lions_1996})} Let $(\rho,u,P)$ be a smooth solution to \eqref{11}-\eqref{14} satisfying \eqref{15}. It holds that for any $(x, t) \in \Omega \times(0, T)$
	\begin{equation}\label{31}
		0 \leq \rho(x, t) \leq \bar{\rho}.
	\end{equation}
\end{lemma}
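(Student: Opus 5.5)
The bound \eqref{31} follows from the fact that, for a smooth solution, the density is simply transported along the flow of a divergence-free velocity field that vanishes on $\partial\Omega$. We will therefore use the method of characteristics.

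First we construct the flow map. Since $(\rho,u,P)$ is smooth on $\overline\Omega\times[0,T]$, $u$ is Lipschitz in $x$ uniformly in $t$. Hence for every $x\in\overline\Omega$ the ODE
\[
\frac{d}{dt}X(t;x)=u(X(t;x),t),\qquad X(0;x)=x,
\]
has a unique $C^1$ solution. Because $u=0$ on $\partial\Omega$ by \eqref{14}, every point of $\partial\Omega$ is a stationary trajectory. By uniqueness, no trajectory starting in $\Omega$ can reach $\partial\Omega$ in finite time, so trajectories starting in $\Omega$ stay in $\Omega$ for all $t\in[0,T]$. Running the same argument backward in time shows that $X(t;\cdot)$ is a bijection of $\Omega$ onto itself, in fact a $C^1$ diffeomorphism. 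The incompressibility condition $\eqref{11}_3$ additionally gives $\det \nabla_x X\equiv 1$ by Liouville's formula, although we will not need this.

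Next we use $\operatorname{div} u=0$ to rewrite $\eqref{11}_1$ in non-conservative form, $\rho_t+u\cdot\nabla\rho=0$. Along characteristics this reads
\[
\frac{d}{dt}\rho(X(t;x),t)=(\rho_t+u\cdot\nabla\rho)(X(t;x),t)=0,
\]
so $\rho(X(t;x),t)=\rho_0(x)$. Given any $(y,t)\in\Omega\times(0,T)$, surjectivity of the flow provides $x\in\Omega$ with $X(t;x)=y$. Then $\rho(y,t)=\rho_0(x)\in[0,\bar\rho]$ by \eqref{15}, which is \eqref{31}.

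The only step requiring care is the invariance of $\Omega$ under the flow, that is, ensuring that characteristics neither exit nor enter through the boundary. This is settled by the no-slip condition \eqref{14} together with uniqueness of ODE solutions, as explained above. If one instead wanted to avoid characteristics, an alternative is a renormalization argument: multiply $\rho_t+u\cdot\nabla\rho=0$ by $\beta'(\rho)$ with $\beta(s)=(s-\bar\rho)_+^2$ or $\beta(s)=(s)_-^2$, integrate over $\Omega$, and use $\operatorname{div}u=0$ and $u|_{\partial\Omega}=0$. This yields $\frac{d}{dt}\int_\Omega\beta(\rho)\,dx=0$, and since $\int_\Omega\beta(\rho_0)\,dx=0$, the same conclusion \eqref{31} follows.
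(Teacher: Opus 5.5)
Your proposal is correct and matches what the paper intends: the paper gives no argument of its own beyond noting that the bound follows from the transport equation $\eqref{11}_1$ with $\operatorname{div}u=0$ and citing Lions' Theorem 2.1. Your characteristics argument, with the no-slip condition \eqref{14} making $\Omega$ invariant under the flow, supplies those standard details, and your renormalization variant is essentially the route Lions takes. The only technicality left implicit is that ODE uniqueness at points of $\partial\Omega$ requires first extending $u$ to a Lipschitz field on a neighborhood of $\overline\Omega$.
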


The following  estimate is  essential to  ensure  the validity of the results in  $\mathcal{H}^{1}(\mathbb{R}^2)$ and $\mathrm {BMO}(\mathbb{R}^{2})$, when restricted to bounded domains.
\begin{lemma}\label{lemmaP1} Let $(\rho,u,P)$ be a smooth solution to \eqref{11}-\eqref{14} on $\Om\times[0,T]$ satisfying \eqref{15}. Then there exists a positive constant $C$ depending only on $ \Omega$, such that
	\begin{equation*}
		|\int_{\Omega} P\partial_i u^j \partial_j u^i dx |\leq C\|\na P\|_{L^2}\|\nabla u\|^2_{L^2}.
		\label{37}
	\end{equation*}
\end{lemma}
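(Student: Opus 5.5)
The plan is to rewrite the integrand as a div--curl product on $\mathbb R^2$ and pair it with an extension of $P$ through the $\mathcal H^1$--BMO duality. Extend $u$ by zero outside $\Omega$ and call the extension $\tilde u$. Since $u\in H^1_0$, we have $\tilde u\in H^1(\mathbb R^2)$, $\nabla\tilde u=\mathbf 1_\Omega\nabla u$, and $\operatorname{div}\tilde u=0$ in $\mathcal D'(\mathbb R^2)$. The integrand is $\partial_iu^j\partial_ju^i=\sum_j E_j\cdot B_j$, where
\[
E_j=\partial_j \tilde u=(\partial_j\tilde u^1,\partial_j\tilde u^2),\qquad B_j=\nabla\tilde u^j .
\]
Each $E_j$ is divergence free, because $\operatorname{div}E_j=\partial_j\operatorname{div}\tilde u=0$ in distributions. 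Each $B_j$ is curl free, being a gradient. Lemma \ref{lemma2.3}(i) then gives
\[
\|\partial_i\tilde u^j\partial_j\tilde u^i\|_{\mathcal H^1(\mathbb R^2)}\le C\|\nabla u\|_{L^2}^2 .
\]

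Next, extend the pressure. With $P_\Omega=0$ and $P\in H^1(\Omega)$ (smooth solution), use the Sobolev extension operator $E:H^1(\Omega)\to H^1(\mathbb R^2)$ from \cite[Section 5.4, Theorem 1]{Evans_2010}. Apply it to $P$ itself, not to $P-c$ for some other constant; the mean of $P$ is already zero. This gives $\bar P=EP$ with $\bar P=P$ on $\Omega$ and
\[
\|\nabla\bar P\|_{L^2(\mathbb R^2)}\le C\|P\|_{H^1(\Omega)}\le C\|\nabla P\|_{L^2},
\]
where the last step is Poincaré's inequality for mean-zero functions. Lemma \ref{lemma2.3}(ii) then yields $\|\bar P\|_{\mathrm{BMO}}\le C\|\nabla P\|_{L^2}$.

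Since the product vanishes outside $\Omega$,
\[
\int_\Omega P\,\partial_iu^j\partial_ju^i\,dx=\int_{\mathbb R^2}\bar P\,\partial_i\tilde u^j\partial_j\tilde u^i\,dx\le C\|\bar P\|_{\mathrm{BMO}}\|\partial_i\tilde u^j\partial_j\tilde u^i\|_{\mathcal H^1}.
\]
Combining this with the two bounds above gives the claim.

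I expect the main obstacle to be justifying the duality pairing rigorously. BMO is defined modulo constants, but an $\mathcal H^1$ function has zero integral, so the pairing is well defined. The Fefferman--Stein duality, however, is stated for bounded (or suitably dense) representatives, and $\bar P$ need not be bounded. I would first cut off: let $\bar P_N$ be $\bar P$ truncated at levels $\pm N$. Truncation does not increase the BMO norm by more than a constant factor. The $\mathcal H^1$ function has compact support and lies in $L^1$ (indeed $\nabla u$ is smooth, so it is in $L^2$ with compact support). Apply the duality to $\bar P_N$ and pass $N\to\infty$ by dominated convergence, using $\bar P\in L^2_{\rm loc}$. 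A secondary check is that the distributional divergence of $E_j$ really vanishes across $\partial\Omega$. This holds because $\tilde u\in H^1(\mathbb R^2)$ and $\operatorname{div}\tilde u=0$ globally, which is exactly where the trace $u|_{\partial\Omega}=0$ is used.
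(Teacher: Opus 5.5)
Your proposal is correct and follows the paper's proof essentially step for step. The shared steps are the zero extension $\tilde u$, the div--curl pairs $E_j=\partial_j\tilde u$ and $B_j=\nabla\tilde u^j$ fed into Lemma~\ref{lemma2.3}(i), the Sobolev extension of the mean-zero $P$ combined with Poincar\'e and Lemma~\ref{lemma2.3}(ii), and the $\mathcal H^1$--$\mathrm{BMO}$ duality. Your truncation-plus-dominated-convergence step is the one addition: it correctly justifies the pairing for the unbounded extension $\bar P$, a point the paper leaves implicit in its citation of Fefferman--Stein.
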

{\it Proof. }We denote by \(\tilde u\) the zero extension of \(u\) to \(\mathbb R^2\),  and $\tilde{P}$  the general Sobolev extension  of $P$ to $\mr^2$ (see \cite[Theorem 1, Section 5.4]{Evans_2010}),  which satisfies
\begin{equation}
	\|\tilde{P}\|_{H^{1}\left(\mathbb{R}^{2}\right)} \leq C\left\|P\right\|_{H^{1}(\Omega)}\le C\|\nabla P\|_{L^2(\Om)}.
	\label{38}
\end{equation}
Since $u|_{\partial\Omega}=0,$ it is standard to check that
\begin{equation*}
	\partial_i \tilde{u} =
	\left\{\begin{array}{l}
		\partial_i u, \quad x\in \Omega,\\
		0, \qquad  x\in \mathbb{R}^2\setminus \Omega.
	\end{array}\right.
\end{equation*}
Then  for any $\varphi \in D (\mathbb{R}^2)$, we have
\begin{equation*}
	-\int_{\mathbb{R}^{2}} \partial_{j} \tilde{u}^{i} \partial_{i} \varphi dx
	= \int_{\mathbb{R}^{2}} \tilde{u}^{i} \partial_{ij} \varphi dx= \int_{\Omega}  {u}^{i} \partial_{ij} \varphi dx=- \int_{\Omega} \partial_{i } {u}^{i} \partial_{ j} \varphi dx
	=0,
\end{equation*}
which implies
$$\mathrm{div} \partial_{j} \tilde{u} =0 \mbox{ in } D'(\mathbb{R}^2).
$$
Similarly, it holds that
$$ \nabla^{\perp} \cdot\left(\nabla \tilde{u}^{j}\right) =0 \mbox{ in } D'(\mathbb{R}^2).$$

Thus,  it follows from Lemma \ref{lemma2.3}, \eqref{38}, the duality of $\mathcal{H}^1$ and $\mathrm{BMO}$ \cite[Theorem 2]{FeffermanStein1972}, and Poincar\'{e} inequality  that
\begin{equation*}
	\begin{aligned}	
		|\int_{\Omega} P \partial_i u^j \partial_j u^i dx|& = |\int_{\mathbb{R}^2} \tilde{P} \partial_i {\tilde u}^j \partial_j \tilde{u}^i dx|\\&\leq C\|\tilde{P}\|_{\mathrm{BMO}(\mathbb{R}^2)}\left\|\partial_{j} \tilde{u}^{i} \partial_{i} \tilde{u}^{j}\right\|_{\mathcal{H}^{1}(\mathbb{R}^2)} \\
		&\leq C\|\nabla \tilde{P}\|_{L^2(\mathbb{R}^2)}\|\nabla \tilde{u}\|_{L^{2}\left(\mathbb{R}^{2}\right)}^{2}\\
		&\leq C\|\na P\|_{L^2(\Omega)} \|\nabla u\|_{L^{2}(\Omega)}^2,
	\end{aligned}
\end{equation*}
which indicates  Lemma \ref{lemmaP1}.\hfill $\Box$

Next, we shall give  the following  exponential decay rate in time  for $\|\sqrt{\rho}u\|_{L^2}$.
\begin{lemma} \label{lemma 3.4}Let $(\rho,u,P)$ be a smooth solution to \eqref{11}-\eqref{14} on $\Om\times[0,T]$ satisfying \eqref{15}.  Then there exists a positive constant $C$ depending on $ \bar\rho$, $\|u_0\|_{L^2_\sigma}$, and $\Omega$ such that
	\begin{equation}\label{33}
		\sup_{t \in[0, T]} e^{\alpha t} \|\sqrt\rho u\|_{L^2}^{2} + \int_0^T e^{\alpha t}\|\nabla u\|_{L^2}^{2} dt\leq C,
	\end{equation}
	with some positive constant $\alpha$ depending only on $\bar{\rho}$, and $\Omega$.
\end{lemma}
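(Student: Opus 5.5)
The plan is to start from the basic energy identity for smooth solutions. Multiplying $\eqref{11}_2$ by $u$, integrating over $\Omega$, and using $\eqref{11}_1$, $\eqref{11}_3$ and the boundary condition \eqref{14}, we obtain
\[
\frac12\frac{d}{dt}\|\sqrt\rho u\|_{L^2}^2+\|\nabla u\|_{L^2}^2=0 .
\]
Integrating in time then gives the uniform bound $\sup_t\|\sqrt\rho u\|_{L^2}^2+2\int_0^T\|\nabla u\|_{L^2}^2dt\le \|\sqrt{\rho_0}u_0\|_{L^2}^2\le \bar\rho\|u_0\|_{L^2}^2$. This already covers the unweighted part of \eqref{33}.

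For the exponential weight, the key observation is that the domain is bounded and $u$ vanishes on $\partial\Omega$. Combining \eqref{31} with the Poincar\'e inequality for $H^1_0(\Omega)$ gives
\[
\|\sqrt\rho u\|_{L^2}^2\le \bar\rho\|u\|_{L^2}^2\le \bar\rho\, C_P\|\nabla u\|_{L^2}^2 ,
\]
where $C_P$ depends only on $\Omega$. No lower bound on $\rho$ is needed here, so vacuum is harmless.

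Next I set $\alpha\triangleq (\bar\rho C_P)^{-1}$, which depends only on $\bar\rho$ and $\Omega$. Multiplying the energy identity by $e^{\alpha t}$ gives
\[
\frac{d}{dt}\Big(e^{\alpha t}\|\sqrt\rho u\|_{L^2}^2\Big)+2e^{\alpha t}\|\nabla u\|_{L^2}^2=\alpha e^{\alpha t}\|\sqrt\rho u\|_{L^2}^2\le e^{\alpha t}\|\nabla u\|_{L^2}^2 .
\]
It follows that $\frac{d}{dt}\big(e^{\alpha t}\|\sqrt\rho u\|_{L^2}^2\big)+e^{\alpha t}\|\nabla u\|_{L^2}^2\le0$. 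Integrating over $(0,t)$ and taking the supremum over $t\in[0,T]$ yields \eqref{33} with $C=\bar\rho\|u_0\|_{L^2_\sigma}^2$, independent of $T$.

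There is no real obstacle. The only point needing care is that the Poincar\'e step bounds the weighted quantity $\|\sqrt\rho u\|_{L^2}$ by $\|\nabla u\|_{L^2}$ using only the upper bound $\bar\rho$. This is why the boundedness of $\Omega$ together with the Dirichlet condition gives decay even in the presence of vacuum.
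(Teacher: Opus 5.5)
Your proposal is correct and follows the paper's proof essentially verbatim: test the momentum equation with $u$ to get the energy identity, bound $\|\sqrt\rho u\|_{L^2}^2\le\bar\rho C_P\|\nabla u\|_{L^2}^2$ using only the upper density bound and the Poincar\'e inequality, and integrate the $e^{\alpha t}$-weighted differential inequality. Your $\alpha=(\bar\rho C_P)^{-1}$ is just twice the paper's normalization. One immaterial nit: adding the supremum term to the full time integral gives the constant $2\bar\rho\|u_0\|_{L^2}^2$, not $\bar\rho\|u_0\|_{L^2}^2$.
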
 {\it Proof. } Multiplying $\eqref{11}_2$ by $u$ and integrating by parts lead to
\begin{equation}
	\frac{1}{2} \frac{d}{d t} \int_{\Omega} \rho|u|^{2} d x+ \int_{\Omega} |\nabla u|^{2} d x=0.
	\label{340}
\end{equation}

By virtue of  Poincar\'{e} inequality and \eqref{31},  we have
\begin{equation}\label{35} 
	\int_{\Omega} \rho|u|^{2} dx \leq \bar{\rho}\|u\|_{L^{2}}^{2}  \leq \bar{\rho}C(\Omega)\|\nabla u\|_{L^{2}}^{2}\leq \frac{1}{2\alpha}\|\nabla u\|_{L^{2}}^{2},
\end{equation}
where $\alpha$ is a positive constant depending only on $\bar{\rho}$ and $\Omega$. This and \eqref{340} show that
\begin{equation}
	\frac{d}{d t} \int_{\Omega} \rho|u|^{2} d x+ \alpha\int_{\Omega} \rho|u|^{2} d x+ \int_{\Omega} |\nabla u|^{2} d x\leq0.
	\label{34}
\end{equation}

Then, multiplying \eqref{34} by $e^{\alpha t}$ and combining with \eqref{35}, we get
\begin{equation*}
	\sup_{t \in[0, T]} e^{\alpha t} \|\sqrt\rho u\|_{L^2}^{2} + \int_0^T e^{\alpha t}\left(\int_{\Om}|\nabla u|^{2}dx\right) dt\leq C,
\end{equation*}
which completes the proof
   of Lemma \ref{lemma 3.4}.\hfill $\Box$

With Lemmas \ref{Lemma 31}-\ref{lemma 3.4} at hand, we establish  the  time-exponential decay estimate for $\|\na u\|_{L^2}$.
\begin{lemma} \label{lemma 3.40} Let $(\rho,u,P)$ be a smooth solution to \eqref{11}-\eqref{14} on $\Om\times[0,T]$  satisfying \eqref{15}. Then  there exists a positive constant $C$ depending on $ \bar\rho$, $\|u_0\|_{L^2_\sigma}$, and $\Omega$ such that
	\begin{equation}
		\sup _{t \in[0, T]} \eta e^{\alpha t}\|\nabla u\|_{L^{2}}^{2}+\int_{0}^{T}\eta e^{\alpha t} \|\sqrt{\rho}\dot{u}\|_{L^2}^2dt \leq C,
		\label{313}
	\end{equation}
	where $\alpha$ is  the same as Lemma \ref{lemma 3.4} and  $\eta(t)\triangleq\min\{1,t\}$.
\end{lemma}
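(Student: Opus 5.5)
Multiplying the momentum equation by $\dot u$ and integrating over $\Omega$ gives the standard identity
\[
\int_\Omega \rho|\dot u|^2dx=\int_\Omega \Delta u\cdot\dot u\,dx-\int_\Omega\nabla P\cdot\dot u\,dx .
\]
Since $u_t|_{\partial\Omega}=0$ and $\dot u|_{\partial\Omega}=0$, integration by parts with $\operatorname{div}u=0$ yields
\[
\frac12\frac{d}{dt}\|\nabla u\|_{L^2}^2+\|\sqrt\rho\dot u\|_{L^2}^2
=-\int_\Omega \partial_k u^j\partial_k(u^i\partial_i u^j)\,dx+\int_\Omega P\,\partial_i u^j\partial_j u^i\,dx .
\]
The first term on the right equals $-\int_\Omega \partial_k u^i\partial_i u^j\partial_k u^j\,dx$, because the transport part integrates to zero. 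It is bounded by $\|\nabla u\|_{L^3}^3$.

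The pressure term is handled by Lemma \ref{lemmaP1}, which bounds it by $C\|\nabla P\|_{L^2}\|\nabla u\|_{L^2}^2$. Next we use the Stokes estimate for $-\Delta u+\nabla P=-\rho\dot u$, $\operatorname{div}u=0$, $u|_{\partial\Omega}=0$, with the normalization $P_\Omega=0$:
\[
\|\nabla^2u\|_{L^2}+\|\nabla P\|_{L^2}\le C\|\rho\dot u\|_{L^2}\le C\bar\rho^{1/2}\|\sqrt\rho\dot u\|_{L^2}.
\]
By Lemma \ref{lm21} and Poincar\'e,
\[
\|\nabla u\|_{L^3}^3\le C\|\nabla u\|_{L^2}^2\|\nabla^2 u\|_{L^2}+C\|\nabla u\|_{L^2}^3 .
\]
Combining these and applying Young's inequality, we should obtain
\[
\frac{d}{dt}\|\nabla u\|_{L^2}^2+\|\sqrt\rho\dot u\|_{L^2}^2\le C\|\nabla u\|_{L^2}^4+C\|\nabla u\|_{L^2}^3
\le C\|\nabla u\|_{L^2}^2\,\|\nabla u\|_{L^2}^2+C\|\nabla u\|_{L^2}^2 .
\]

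Next we multiply by $\eta e^{\alpha t}$. Since $(\eta e^{\alpha t})'\le (1+\alpha)e^{\alpha t}$, this gives
\[
\frac{d}{dt}\bigl(\eta e^{\alpha t}\|\nabla u\|_{L^2}^2\bigr)+\eta e^{\alpha t}\|\sqrt\rho\dot u\|_{L^2}^2
\le C\|\nabla u\|_{L^2}^2\bigl(\eta e^{\alpha t}\|\nabla u\|_{L^2}^2\bigr)+C e^{\alpha t}\|\nabla u\|_{L^2}^2 .
\]
Gronwall's inequality finishes the proof. The coefficient $\|\nabla u\|_{L^2}^2$ is in $L^1(0,T)$ by the energy equality \eqref{340}, and the source $e^{\alpha t}\|\nabla u\|_{L^2}^2$ is in $L^1(0,T)$ with a bound independent of $T$ by Lemma \ref{lemma 3.4}. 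The quantity $\eta e^{\alpha t}\|\nabla u\|_{L^2}^2$ vanishes at $t=0$, so no assumption on $\nabla u_0$ is needed. This gives \eqref{313} with $C$ depending only on $\bar\rho$, $\|u_0\|_{L^2_\sigma}$ and $\Omega$.

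The main obstacle is the pressure term. A naive bound $\|P\|_{L^2}\|\nabla u\|_{L^4}^2$ would bring in $\|\nabla u\|_{L^2}\|\nabla^2u\|_{L^2}$, which is also acceptable in 2D. What Lemma \ref{lemmaP1} actually supplies is the quadratic dependence on $\|\nabla u\|_{L^2}$, which is exactly what makes the Gronwall coefficient integrable. The other point requiring care is justifying the boundary integrations by parts. Here we use that $\dot u=0$ on $\partial\Omega$ because $u=0$ there, and that the term $\int_\Omega\nabla P\cdot(u\cdot\nabla u)\,dx$ is treated by writing it as $-\int_\Omega P\,\partial_j u^i\partial_i u^j\,dx$, using $\operatorname{div}u=0$ and $u|_{\partial\Omega}=0$.
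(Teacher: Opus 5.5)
Your proof is correct and follows the paper's route: test by $\dot u$, Lemma~\ref{lemmaP1} for the pressure, the Stokes estimate, and Gronwall with the weight $\eta e^{\alpha t}$. The only difference is the cubic term: you use the $L^3$ Gagliardo--Nirenberg inequality and absorb the lower-order remainder $\|\nabla u\|_{L^2}^3$ into the integrable source $e^{\alpha t}\|\nabla u\|_{L^2}^2$, whereas the paper uses $\int_\Omega\nabla u\,dx=0$ to get $\|\nabla u\|_{L^4}^2\le C\|\nabla u\|_{L^2}\|\rho\dot u\|_{L^2}$ with no remainder.

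One correction to your closing remark: the naive bound is \emph{not} acceptable. Indeed, $\|P\|_{L^2}\|\nabla u\|_{L^4}^2\le C\|\nabla u\|_{L^2}\|\sqrt{\rho}\dot u\|_{L^2}^2$ is quadratic in the dissipation and cannot be absorbed without smallness of $\|\nabla u\|_{L^2}$. The essential gain of Lemma~\ref{lemmaP1} is that its bound is linear in $\|\nabla P\|_{L^2}\le C\|\sqrt{\rho}\dot u\|_{L^2}$, which is what permits Young's inequality.
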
 {\it Proof. }First, the standard $L^{p}$-estimate to \eqref{11} (see \cite[Theorem 4.6, $m=2$]{AmroucheGirault1994})  gives 
	\begin{equation}
\left\| \na^2u\right\|_{L^{r}} +\left\|\nabla P\right\|_{L^ r} \leq C\left\|\rho \dot{u}\right\|_{L^r},\quad \forall r\in (1,\infty),
\label{315}
\end{equation}
which together with
\[\int_\Om  \na udx=0, \]
and  Gagliardo-Nirenberg inequality implies that 
\begin{equation}
\|\nabla u\|_{L^p}
\leq C\|\nabla u\|_{L^2}^{\frac2p} \|\rho \dot{u}\|_{L^2}^{1-\frac2p},\quad \forall p\in (2,\infty).	
\label{418}
\end{equation} 

Since $u=0$ on $\partial\Om$, differentiation in time gives $u_t=0$ there, and $u\cdot\nabla u=0$ there as well. Thus $\dot u=0$ on $\partial\Om$. Next,  motivated by \cite{Lu_2018},  integrating  $\eqref{11}_{2}$ multiplied  by $\dot{u}$  over $\Omega$ yields
\begin{equation}
	\int_{\Omega} \rho|\dot{u}|^{2} dx=\int_{\Omega}\left(\Delta u \cdot \dot{u}-\nabla P \cdot \dot{u} \right)dx \triangleq I_{1}+I_{2} .
	\label{316}
\end{equation}

Now,  we estimate $I_1, I_2$ as follows.

By \eqref{11}$_3$,   \eqref{418} and Cauchy inequality, we deduce 
\begin{equation}
	\begin{aligned}
		I_{1}=&-\frac12\frac{d}{dt}\int_{\Omega} |\nabla u|^{2}dx-\int_{\Omega} \partial_{i} u^{j}\partial_{i}(u^k \partial_{k}u^j)dx\\=&-\frac12\frac{d}{dt}\int_{\Omega} |\nabla u|^{2}dx-\int_{\Omega} \partial_{i} u^{j} \partial_{i}u^k \partial_{k}u^jdx-\frac12\int_{\Omega}\partial_k(u^k|\partial_iu^j|^2)dx\\
		\leq& -\frac12\frac{d}{d t} \int_{\Omega} |\nabla u|^{2} dx+C\|\nabla u\|_{L^{2}}\|\nabla u\|_{L^{4}}^{2}\\
		\leq &-\frac12\frac{d}{d t} \int_{\Omega}|\nabla u|^{2} dx +\frac14\|\sqrt{\rho} \dot{u}\|_{L^{2}}^{2}+C \left \| \nabla u\right \|_{L^{2}}^{4}.
	\end{aligned}
	\label{317}
\end{equation}

For $I_2$, it is standard to check from $\eqref{11}_3$ and  integration by parts 
\begin{equation*}
	\begin{aligned}	
		I_{2}=-\int_\Om 
		\partial_i P(\partial_t u^i+u^j\partial_j u^i )dx=\int_\Omega P\partial_ju^i\partial_iu^jdx.
	\end{aligned}
\end{equation*}
Moreover,  by Lemma \ref{lemmaP1} and \eqref{315}, we have 
\begin{equation}
	\begin{aligned}	
		I_{2}\leq C\|\nabla P\|_{L^2}\|\nabla u\|_{L^2}^2
		\leq C\|\rho \dot{u}\|_{L^2}\|\nabla u\|_{L^2}^2\leq \frac{1}{4}\|\sqrt{\rho} \dot{u}\|_{L^2}^2+C\|\nabla u\|_{L^2}^4.
	\end{aligned}
	\label{318}
\end{equation}

Substituting  \eqref{317}, \eqref{318} into \eqref{316} gives
\begin{equation}\la{nau}
	\frac{d}{dt}\int_{\Om}|\nabla u|^2dx+\int_{\Om}\rho |\dot{u}|^2 dx\leq C\|\nabla u\|_{L^2}^4.
\end{equation}
Finally, multiplying  
\eqref{nau}  by $\eta e^{\alpha t}$, one has
\begin{equation*}
	\frac{d}{dt}\biggl(\eta e^{\alpha t}\int_{\Om}|\nabla u|^2dx\biggr)+\eta e^{\alpha t}\int_{\Om}\rho |\dot{u}|^2 dx\leq C\eta e^{\alpha t}\|\nabla u\|_{L^2}^4+Ce^{\alpha t}\|\nabla u\|_{L^2}^2,
\end{equation*}
which, along with \eqref{33} and  Gronwall's inequality,  leads to  \eqref{313}.  Thus,  the proof of Lemma \ref{lemma 3.40} is completed.  \hfill $\Box$

Now, we are in a position to derive the following estimate on the material derivative
of velocity.
\begin{lemma}\label{lm32}
	Let $(\rho,u,P)$ be a smooth solution to \eqref{11}-\eqref{14} on $\Om\times[0,T]$ satisfying \eqref{15}. 	Then there exists a positive constant $C$ depending only on  $\bar\rho$, $\left\|u_0\right\|_{L_\sigma^2}$,    and $\Omega$  such that 
	\begin{equation}
		\sup _{t \in[0, T]} \eta^2 e^{\alpha t}\left\|\sqrt{\rho} \dot{u}\right\|_{L^{2}}^{2}+\int_{0}^{T} \eta^2 e^{\alpha t} \left\|\nabla \dot{u}\right\|_{L^{2}}^{2} d t \leq C,
		\label{32600}
	\end{equation}
	where $\alpha$, $\eta$  are  the same as Lemma \ref{lemma 3.40}.
\end{lemma}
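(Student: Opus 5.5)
We follow the standard Hoff-type approach used in \cite{Lu_2018, wei_2025}: apply the operator $\dot u^j[\partial_t+\operatorname{div}(u\,\cdot)]$ to the $j$-th component of the momentum equation, written as $\rho\dot u+\nabla P=\Delta u$, and integrate over $\Omega$. Since $\dot u=0$ on $\partial\Omega$, the boundary terms from integrating by parts in $\dot u$ vanish. The continuity equation then gives
\begin{equation*}
\frac12\frac{d}{dt}\int_\Omega\rho|\dot u|^2dx
=\int_\Omega \dot u^j\big(\partial_t\Delta u^j+\operatorname{div}(u\Delta u^j)\big)dx
-\int_\Omega\dot u^j\big(\partial_t\partial_jP+\operatorname{div}(u\,\partial_jP)\big)dx
\triangleq J_1+J_2 .
\end{equation*}
For $J_1$, we integrate by parts and use $\nabla\dot u=\nabla u_t+\nabla(u\cdot\nabla u)$. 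This yields
\begin{equation*}
J_1\le-\frac34\|\nabla\dot u\|_{L^2}^2+C\|\nabla u\|_{L^4}^4 .
\end{equation*}
The terms are the usual ones, namely $\partial_i\dot u^j\,\partial_ku^k\partial_iu^j$, $\partial_i\dot u^j\,\partial_iu^k\partial_ku^j$ and $\partial_k\dot u^j\,\partial_iu^k\partial_iu^j$, each bounded by $\|\nabla\dot u\|_{L^2}\|\nabla u\|_{L^4}^2$.

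For $J_2$, integrating by parts gives
\begin{equation*}
J_2=\int_\Omega\partial_j\dot u^j\,P_t\,dx+\int_\Omega\partial_k\dot u^j\,u^k\partial_jP\,dx .
\end{equation*}
We rewrite $\partial_j\dot u^j=\partial_j(u^k\partial_ku^j)=\partial_ju^k\partial_ku^j$, using $\operatorname{div}u_t=0$ and $\operatorname{div}u=0$. Then
\begin{equation*}
\int_\Omega P_t\,\partial_ju^k\partial_ku^j\,dx
=\frac{d}{dt}\int_\Omega P\,\partial_ju^k\partial_ku^j\,dx-2\int_\Omega P\,\partial_ju^k\partial_k u_t^j\,dx .
\end{equation*}
We replace $u_t$ by $\dot u-u\cdot\nabla u$ and integrate by parts once more in the convective pieces, again using $\operatorname{div}u=0$. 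All remaining terms take the form $\int P\nabla u\nabla\dot u$, $\int u\nabla P\nabla\dot u$, or $\int P|\nabla u|^3$. These are bounded by
\begin{equation*}
\|P\|_{L^4}\|\nabla u\|_{L^4}\|\nabla\dot u\|_{L^2}+\|u\|_{L^4}\|\nabla P\|_{L^4}\|\nabla\dot u\|_{L^2}+\|P\|_{L^3}\|\nabla u\|_{L^3}^3 .
\end{equation*}
We control the individual factors as follows:
\begin{itemize}
\item By \eqref{315} with $r=4$ and the Gagliardo--Nirenberg inequality applied to $\dot u\in H^1_0$, we get $\|\nabla P\|_{L^4}\le C\|\rho\dot u\|_{L^4}\le C\|\sqrt\rho\dot u\|_{L^2}^{1/2}\|\nabla\dot u\|_{L^2}^{1/2}$.
\item Since $P_\Omega=0$, the Poincaré inequality gives $\|P\|_{L^4}\le C\|\nabla P\|_{L^2}\le C\|\sqrt\rho\dot u\|_{L^2}$.
\item From \eqref{418}, $\|\nabla u\|_{L^4}^2\le C\|\nabla u\|_{L^2}\|\sqrt\rho\dot u\|_{L^2}$.
\end{itemize}
After Young's inequality, this gives the differential inequality
\begin{equation*}
\frac{d}{dt}B(t)+\frac12\|\nabla\dot u\|_{L^2}^2\le C\big(\|\nabla u\|_{L^2}^2+\|\nabla u\|_{L^2}^4\big)\|\sqrt\rho\dot u\|_{L^2}^2+C\|\sqrt\rho\dot u\|_{L^2}^2\|\nabla u\|_{L^2}^2,
\end{equation*}
where
\begin{equation*}
B(t)\triangleq\frac12\|\sqrt\rho\dot u\|_{L^2}^2-\int_\Omega P\,\partial_ju^k\partial_ku^j\,dx .
\end{equation*}
By Lemma \ref{lemmaP1} and \eqref{315}, the correction satisfies
\begin{equation*}
\Big|\int_\Omega P\,\partial_ju^k\partial_ku^j\,dx\Big|\le C\|\sqrt\rho\dot u\|_{L^2}\|\nabla u\|_{L^2}^2\le\frac14\|\sqrt\rho\dot u\|_{L^2}^2+C\|\nabla u\|_{L^2}^4 .
\end{equation*}
Hence $B\ge\frac14\|\sqrt\rho\dot u\|_{L^2}^2-C\|\nabla u\|_{L^2}^4$.

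We then multiply the inequality by $\eta^2e^{\alpha t}$. The derivative of the weight produces $(2\eta\eta'+\alpha\eta^2)e^{\alpha t}B$. This is controlled by $C\eta e^{\alpha t}\big(\|\sqrt\rho\dot u\|_{L^2}^2+\|\nabla u\|_{L^2}^4\big)$, which is integrable in time by \eqref{313} and \eqref{33}; note that $\eta\|\nabla u\|_{L^2}^4\le C\|\nabla u\|_{L^2}^2$ by \eqref{313}. The coefficient $\|\nabla u\|_{L^2}^2+\|\nabla u\|_{L^2}^4$ multiplying $\eta^2e^{\alpha t}\|\sqrt\rho\dot u\|_{L^2}^2$ becomes, after using \eqref{313}, a quantity bounded by $C\eta^{-1}\cdot\eta\|\sqrt\rho\dot u\|^2$ structure. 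More simply, $\eta^2\|\nabla u\|^2\|\sqrt\rho\dot u\|^2\le C\eta\|\sqrt\rho\dot u\|^2$, so this term too is integrable by \eqref{313}. Since $\eta^2(0)=0$, integrating in time and absorbing the correction term yields \eqref{32600}.

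The main obstacle is the pressure term $J_2$, specifically $\int P_t\operatorname{div}\dot u$. No estimate for $P_t$ is available, so this term must be converted into the time derivative of $\int P\,\partial_ju^k\partial_ku^j$ plus lower-order terms. One must also check that the boundary integrals vanish (they do, since $u=\dot u=0$ on $\partial\Omega$). Finally, the correction term in $B$ must be controllable, which is exactly where the Hardy--BMO bound of Lemma \ref{lemmaP1} is needed: it gives $\|\nabla P\|_{L^2}\|\nabla u\|_{L^2}^2$ instead of a cubic term in $\|\nabla u\|_{L^3}$.
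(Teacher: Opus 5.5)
Your overall scheme matches the paper's: testing with $\dot u^j(\partial_t+u\cdot\nabla)$, converting $\int P_t\operatorname{div}\dot u$ into $\frac{d}{dt}\int P\,\partial_ju^k\partial_ku^j$, the modified functional $B$, and the weight $\eta^2e^{\alpha t}$. However, the pressure terms are not closed by the bounds you list, so the differential inequality you state does not follow.

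\textbf{The term $\int P\nabla u\nabla\dot u$.} After Young, your H\"older estimate gives
\[
C\|P\|_{L^4}^2\|\nabla u\|_{L^4}^2\le C\|\sqrt\rho\dot u\|_{L^2}^3\|\nabla u\|_{L^2},
\]
not $C\|\sqrt\rho\dot u\|_{L^2}^2\|\nabla u\|_{L^2}^2$. In Gronwall form, the coefficient of $\eta^2e^{\alpha t}\|\sqrt\rho\dot u\|_{L^2}^2$ is then $\|\sqrt\rho\dot u\|_{L^2}\|\nabla u\|_{L^2}$. Bounding the term directly after weighting would instead require $\int_0^1 t^{1/2}\|\sqrt\rho\dot u\|_{L^2}^2\,dt<\infty$. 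Neither follows from \eqref{33} and \eqref{313}, which give only $t\|\nabla u\|_{L^2}^2\le C$ and $\int_0^1 t\|\sqrt\rho\dot u\|_{L^2}^2\,dt\le C$ for $u_0\in L^2_\sigma$.

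\textbf{The terms $\int u\nabla P\nabla\dot u$.} These fail similarly. Since vacuum is allowed, you only have $\|u\|_{L^4}\le C\|\nabla u\|_{L^2}$, which produces $\|\nabla u\|_{L^2}^4\|\sqrt\rho\dot u\|_{L^2}^2$. Your remark $\eta^2\|\nabla u\|_{L^2}^2\|\sqrt\rho\dot u\|_{L^2}^2\le C\eta\|\sqrt\rho\dot u\|_{L^2}^2$ does not cover the coefficient $\|\nabla u\|_{L^2}^4$. Indeed, $\eta^2\|\nabla u\|_{L^2}^4$ is merely bounded, and neither $\int_0^1\|\sqrt\rho\dot u\|_{L^2}^2\,dt$ nor $\int_0^1\|\nabla u\|_{L^2}^4\,dt$ is controlled. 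There is a further issue when $\rho$ may vanish: Gagliardo--Nirenberg for $\dot u\in H^1_0$ gives $\|\dot u\|_{L^2}^{1/2}$, not $\|\sqrt\rho\dot u\|_{L^2}^{1/2}$, in your bound for $\|\nabla P\|_{L^4}$.

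\textbf{The missing idea.} The $\mathcal H^1$--BMO duality is needed precisely for $\int P\nabla u\nabla\dot u$, not only for the correction term in $B$. Before applying it, the convective pressure terms must be recombined. Integrating by parts (all boundary terms vanish since $u=\dot u=0$ on $\partial\Omega$), one has $\int u^k\partial_jP\,\partial_k\dot u^j=\int(u\cdot\nabla P)\operatorname{div}\dot u+\int\dot u^j\partial_ju^k\partial_kP$ and $\int\dot u^j\partial_ju^k\partial_kP=-\int P\,\partial_ju^k\partial_k\dot u^j$. So your two terms of type $\int u\nabla P\nabla\dot u$ collapse to $-\int P\,\partial_ju^k\partial_k\dot u^j$. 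Moreover, $\partial_ju^k\partial_ku^l\partial_lu^j=\operatorname{tr}(\nabla u)^3=0$ by Cayley--Hamilton, since $\operatorname{tr}\nabla u=0$. Hence the pressure contribution is exactly
\[
\frac{d}{dt}\int P\,\partial_ju^k\partial_ku^j-3\int P\,\partial_iu^k\partial_k\dot u^i .
\]

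\textbf{Bounding the remaining term.} Extend $u$ and $\dot u$ by zero and $P$ by a Sobolev extension. Then $\partial_i\tilde u$ is divergence-free and $\nabla\tilde{\dot u}^i$ is curl-free, so Lemma~\ref{lemma2.3} and the duality give
\[
\Bigl|\int P\,\partial_iu^k\partial_k\dot u^i\Bigr|\le C\|\nabla P\|_{L^2}\|\nabla u\|_{L^2}\|\nabla\dot u\|_{L^2}.
\]
Combine this with \eqref{315} and with $\|\nabla u\|_{L^4}^4\le C\|\nabla u\|_{L^2}^2\|\sqrt\rho\dot u\|_{L^2}^2$ from \eqref{418}. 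This yields
\[
\frac{d}{dt}B+\frac12\|\nabla\dot u\|_{L^2}^2\le C\|\sqrt\rho\dot u\|_{L^2}^2\|\nabla u\|_{L^2}^2,
\]
and from there your weighting argument goes through as you wrote it.
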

{\it Proof.} Operating $\dot{u}^{j}(\partial_t+u\cdot \nabla)$ to $\eqref{11}_2^j$, and integrating the resulting equation over $\Om$, one has after integration by parts  
\begin{equation}\la{j}
	\begin{aligned}
		\frac{1}{2} \frac{d}{d t} \int_\Omega \rho|\dot{u}|^2 d x+ \int_\Omega|\nabla \dot{u}|^2 d x =&-\int_\Omega  \partial_i\left(\partial_i u \cdot \nabla u^j\right) \dot{u}^j dx-\int_\Omega \operatorname{div}\left(\partial_i u \partial_i u^j\right) \dot{u}^j dx\\&-\int_\Omega \dot{u}^j\partial_t\partial_jPdx-\int_\Omega \dot{u}^ju\cdot\nabla\partial_jPdx\\\triangleq&\sum_{i=1}^{4}J_{i}.
	\end{aligned}
\end{equation}

We will estimate each term on
the right-hand side of \eqref{j} as follows. 

First, direct calculations show
\begin{equation}\la{j1j2}
	\begin{aligned}
		J_1+J_2&=\int_\Om(\partial_iu\cdot\na u^j)\partial_i\dot u^j dx+\int_\Om (\partial_iu\partial_iu^j)\cdot\na\dot{u}^jdx\\&\leq\frac{1}{4}\|\na\dot u\|_{L^2}^2+C\|\na u\|_{L^4}^4.
	\end{aligned}
\end{equation}

Next,  it follows from $\dot u|_{\partial\Omega}=0,$ \eqref{11}$_3$ and the fact $$(\partial_t+u\cdot\nabla)\partial_iP=\partial_i \dot P-\partial_i u\cdot\nabla P$$ that 
\begin{equation*}\la{j3j4}
	\begin{aligned}
		J_3+J_4=&\int_\Om \partial_t P\operatorname{div}\dot{u} dx+\int_\Om (u\cdot\na P)\operatorname{div}\dot u dx+\int_\Om(\partial_{ j}u\cdot\na P)\dot{u}^j dx\\=&\frac{d}{d t} \int_\Omega P \partial_{j} u^{k} \partial_{k} u^{j} dx-\int_\Omega P \left(\partial_{t}+u\cdot\nabla\right)\left( \partial_j u^k \partial_k {u}^j\right)dx- \int_\Omega  P \partial_j u^k \partial_k\dot{u}^j   dx \\
		=&\frac{d}{d t} \int_\Omega P \partial_{j} u^{k} \partial_{k} u^{j} dx  +\int_\Omega P (\partial_{i} u^{j} \partial_{j} u^{k} \partial_{k} u^{i}+\partial_{i} u^{k} \partial_{k} u^{j} \partial_{j} u^{i}) dx\\&-3 \int_\Omega P \partial_{i} u^{k}  \partial_{k}\dot{u}^{i} dx.
        \\\triangleq&\frac{d}{d t}\int_\Omega P \partial_{j} u^{k} \partial_{k} u^{j} dx +\sum_{i=5}^{6}J_{i}.
	\end{aligned}
\end{equation*}
We estimate  $J_i(i = 5,6)$  term by term.
   
 First,  by   $\operatorname{div}u=0$, we know   that  the $2\times2$ matrix $A\triangleq\na u$ has trace zero. This and the Cayley-Hamilton
identity give     
$$A^2=-\operatorname{det}(A) I,$$
which, together with $\operatorname{div}u=0$,  leads to      
$$\partial_i u^j \partial_j u^k \partial_k u^i=\partial_i u^k \partial_k u^j \partial_j u^i=\operatorname{tr}\left(A^3\right)=-\operatorname{det}A\operatorname{tr}(A)=0.$$
Then
 \begin{equation*}
	\begin{aligned}\la{j42}
	J_5=\int_\Omega P \partial_{i} u^{j} \partial_{j} u^{k} \partial_{k} u^{i} dx+\int_\Omega P \partial_{i} u^{k} \partial_{k} u^{j} \partial_{j} u^{i} dx=0. 
	\end{aligned}
\end{equation*}

 Next, we set  $\tilde{\dot{u}}$ as the zero extension of $\dot{u}$ to $\mr^2$. 
 
 Noticing that    $\dot u|_{\partial\Omega}=0$ gives
\begin{equation*}
	\partial_i \tilde{\dot u} =
	\left\{\begin{array}{l}
		\partial_i \dot u, \quad x\in \Omega,\\
		0, \quad \quad x\in \mathbb{R}^2\setminus \Omega.
	\end{array}\right.
\end{equation*}
So, we find
\begin{equation}\la{j421}
    \|\nabla\tilde{\dot{ u}}\|_{L^2(\mathbb R^2)}
=\|\nabla\dot u\|_{L^2(\Omega)}.
\end{equation}

Set 
\[
\dot{\tilde u}
\triangleq\partial_t\tilde u+\tilde u\cdot\nabla\tilde u.
\]
Since $u|_{\partial\Omega}=0$, we can check that 
\[\dot{\tilde u}=\tilde{\dot{u}}, \quad\text{a.e. in }\mr^2,\]
which, combined with \eqref{j421}, implies \[\na \dot{\tilde u}=\na \tilde{\dot{u}}, \quad\text{a.e. in }\mr^2.\]
Therefore, 
\begin{equation*}
	\begin{aligned}
\int_{\mr^2}  \tilde P \partial_{i} \tilde u^{k} \partial_{k} {\dot{\tilde u}}^{i} dx=\int_{\mr^2} \tilde P \partial_{i} \tilde u^{k} \partial_{k} \tilde {\dot{u}}^{i} dx.
	\end{aligned}
\end{equation*}

Combining this with     \eqref{38}, \eqref{j421}, using  Lemma \ref{lemma2.3},   and noticing that \[ \mathrm{div} \partial_{i} \tilde{u} =0 \mbox{ in } D'(\mathbb{R}^2),\quad\nabla^{\perp} \cdot\left(\nabla \tilde{\dot{u}}^{i} \right) =0 \mbox{ in } D'(\mathbb{R}^2),\] 
we get
\begin{equation}
	\begin{aligned}
		|J_6|&\leq C\|\tilde{P}\|_{\mathrm{BMO}(\mathbb{R}^2)}\left\|\partial_{i} \tilde{u}^{k} \partial_{k} \tilde{\dot u}^{i}\right\|_{\mathcal{H}^{1}(\mathbb{R}^2)} \\
		&\leq C\|\nabla \tilde{P}\|_{L^2(\mathbb{R}^2)}\|\nabla \tilde{u}\|_{L^{2}\left(\mathbb{R}^{2}\right)}\|\nabla \tilde{\dot u}\|_{L^{2}\left(\mathbb{R}^{2}\right)}\\
		&\leq C\|\na P\|_{L^2(\Omega)} \|\nabla u\|_{L^{2}(\Omega)}\|\nabla \dot u\|_{L^{2}(\Omega)}\\&\leq \frac{1}{4}\|\na\dot u\|_{L^2}^2+C\|\na P\|_{L^{2}}^{2}\|\na u\|_{L^{2}}^{2}.\la{j41}
	\end{aligned}
\end{equation}

Then, substituting    \eqref{j1j2}-\eqref{j41} into \eqref{j}, it holds that
\begin{equation}
	\Psi^{\prime}(t)+\frac{1}{2}\|\nabla \dot{u} \|_{L^{2}}^2\leq C\|\na P\|_{L^{2}}^{2}\|\na u\|_{L^{2}}^{2}+C\|\nabla u\|_{L^{4}}^{4},
	\label{324}
\end{equation}
where
\begin{equation*}
	\Psi(t) \triangleq \frac{1}{2} \int_\Omega \rho|\dot{u}|^{2} dx-\int_\Omega P \partial_{j} u^{k} \partial_{k} u^{j} dx
\end{equation*}
satisfies
\begin{equation}\label{326}
	\frac{1}{4} \int_\Omega \rho|\dot{u}|^{2} dx - C\|\nabla u\|_{L^2}^4 \leq \Psi(t) \leq \int_\Omega \rho|\dot{u}|^{2} dx + C\|\nabla u\|_{L^2}^4,
\end{equation}
due to \eqref{313} and \eqref{318}.

Moreover, it follows  from \eqref{324},  \eqref{315}, and \eqref{418}  that 
\begin{equation}\label{327}
	\Psi^{\prime}(t)+\frac{1}{2}\|\nabla \dot{u} \|_{L^{2}}^2\leq C\|\sqrt{\rho}\dot{u} \|_{L^{2}}^{2}\|\nabla u\|_{L^2}^2.
\end{equation}

Finally,  we derive \eqref{32600}  by  \eqref{33},  \eqref{313}, \eqref{326}, \eqref{327}, and Gronwall's inequality. \hfill $\Box$

\begin{lemma}\label{lm33}
	Let $(\rho,u,P)$ be a smooth solution to \eqref{11}-\eqref{14} on $\Om\times[0,T]$ satisfying \eqref{15}. 	Then for any $p\in(2,\infty)$, there exists a positive constant $C$ depending only on $p$,  $\bar\rho$, $\left\|u_0\right\|_{L_\sigma^2}$,    and $\Omega$  such that 
	\begin{equation}
		\sup _{t \in[0, T]} \eta^2 e^{\alpha t}\left(\left\|\na u\right\|_{L^p}^2+\left\|P\right\|_{L^p}^2\right)+ \int_{0}^T\eta e^{\alpha t}\left(\|\na u\|_{L^p}^2+\|P\|_{L^p}^2\right)dt\leq C,
		\label{lm36a}
	\end{equation}
	\begin{equation}
		\begin{aligned}
				\sup_{t \in[0, T]}\eta^2e^{\alpha t}\left(\|\na^2u\|_{L^2}^2+\|\na P\|_{L^2}^2\right)+\int_{0}^T\eta e^{\alpha t}\left(\|\na^2u\|_{L^2}^2+\|\na P\|_{L^2}^2\right)dt\leq C,
		\end{aligned}
	\label{lm36aa}
\end{equation}
and	
\begin{equation}
\int_{0}^T\eta^2 e^{\alpha t}\left(\|\na^2u\|_{L^p}^2+\|\na P\|_{L^p}^2\right)dt\leq C,
	\label{lm36aaa}
\end{equation}	
where $\alpha$, $\eta$  are  the same as Lemma \ref{lemma 3.40}.
\end{lemma}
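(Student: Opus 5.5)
The plan is to combine the Stokes $L^r$-estimate \eqref{315} with the Gagliardo--Nirenberg interpolation \eqref{418} and the time-weighted bounds \eqref{313}, \eqref{32600}, and then convert the estimates for $\sqrt\rho\dot u$ and $\nabla\dot u$ into estimates for the pressure and the second derivatives.

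\textbf{Step 1: the $H^2$ bound \eqref{lm36aa}.} Apply \eqref{315} with $r=2$ and use $\rho\le\bar\rho$:
\[
\|\nabla^2u\|_{L^2}+\|\nabla P\|_{L^2}\le C\|\sqrt\rho\dot u\|_{L^2}.
\]
Multiplying by $\eta^2e^{\alpha t}$ and invoking \eqref{32600} gives the supremum part. Multiplying instead by $\eta e^{\alpha t}$ and invoking \eqref{313} gives the time-integral part.

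\textbf{Step 2: the $L^p$ bound \eqref{lm36a}.} For $\nabla u$, use \eqref{418}:
\[
\|\nabla u\|_{L^p}^2\le C\|\nabla u\|_{L^2}^{4/p}\|\sqrt\rho\dot u\|_{L^2}^{2-4/p}.
\]
For the supremum, note that $\eta^2e^{\alpha t}$ splits as $(\eta e^{\alpha t})^{2/p}(\eta^2e^{\alpha t})^{1-2/p}$ times the factor $\eta^{2/p}\le1$. The two pieces are then bounded by \eqref{313} and \eqref{32600} respectively.

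For the integral, Young's inequality gives
\[
\eta e^{\alpha t}\|\nabla u\|_{L^p}^2\le C\,\eta e^{\alpha t}\bigl(\|\nabla u\|_{L^2}^2+\|\sqrt\rho\dot u\|_{L^2}^2\bigr).
\]
This is integrable by \eqref{33} (using $\eta\le1$) and \eqref{313}.

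For $P$, the normalization $P_\Omega=0$ together with the Poincar\'e--Sobolev inequality gives $\|P\|_{L^p}\le C\|P\|_{L^2}^{2/p}\|\nabla P\|_{L^2}^{1-2/p}$, by the mean-zero Gagliardo--Nirenberg inequality of Lemma \ref{lm21}. It therefore suffices to bound $\|P\|_{L^2}$. Here I would use Lemma \ref{div0}: choose $v\in H^1_0$ with $\operatorname{div}v=P$ and $\|\nabla v\|_{L^2}\le C\|P\|_{L^2}$, then test the momentum equation $\nabla P=\Delta u-\rho\dot u$ against $v$. This yields
\[
\|P\|_{L^2}\le C\bigl(\|\nabla u\|_{L^2}+\|\sqrt\rho\dot u\|_{L^2}\bigr).
\]
The same weighted argument as for $\nabla u$ then closes the bound. (Alternatively, $\|P\|_{L^2}\le C\|\nabla P\|_{L^2}$ alone suffices for the supremum but not for the $\eta$-weighted integral; the duality bound handles both.)

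\textbf{Step 3: the $W^{2,p}$ bound \eqref{lm36aaa}.} Use \eqref{315} with $r=p$ and $\rho\le\bar\rho$ to get
\[
\|\nabla^2u\|_{L^p}+\|\nabla P\|_{L^p}\le C\|\dot u\|_{L^p}.
\]
Since $\dot u|_{\partial\Omega}=0$, Sobolev embedding and Poincar\'e's inequality give $\|\dot u\|_{L^p}\le C\|\nabla\dot u\|_{L^2}$. Multiplying by $\eta^2e^{\alpha t}$ and integrating, \eqref{32600} gives the bound.

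\textbf{Main obstacle.} The step needing most care is the weight bookkeeping in Step 2. One must check that the exponents from interpolation match available weights: the factor $\eta^{2/p}$ has to be absorbed using $\eta\le1$, so that \eqref{313} and \eqref{32600} apply in the right combination. One must also obtain the lower-order $L^2$ control of $P$ with weight only $\eta$, which is why the duality bound via Lemma \ref{div0} is used there.
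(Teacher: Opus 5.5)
Your proof is correct and follows the paper's argument: \eqref{315} with $r=2$ and $r=p$, the interpolation \eqref{418}, the weighted bounds \eqref{33}, \eqref{313}, \eqref{32600}, and $\dot u|_{\partial\Omega}=0$ to get $\|\dot u\|_{L^p}\le C\|\nabla\dot u\|_{L^2}$. The only deviation is for $\|P\|_{L^p}$, where the paper uses $\|P\|_{L^p}\le C\|\nabla P\|_{L^2}\le C\|\sqrt\rho\dot u\|_{L^2}$ directly; contrary to your parenthetical remark, this already controls the $\eta$-weighted integral via \eqref{313}, as your own Step 1 shows, so the detour through Lemma \ref{div0} is valid but unnecessary, and so is the weight-splitting for the supremum, since plain Young plus $\eta^2\le\eta\le1$ suffices.
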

{\it Proof.} By \eqref{315}, \eqref{418},  and Poincar\'{e} inequality, one has for any $p\in(2,\infty)$
\begin{equation*}
	\|\na u\|_{L^p}+\|P\|_{L^p}\leq C\|\na P\|_{L^2}+C\|\sqrt{\rho}\dot u\|_{L^2}+C\|\na u\|_{L^2}\leq C\|\sqrt{\rho}\dot u\|_{L^2}+C\|\na u\|_{L^2},
\end{equation*}
which, together with \eqref{313} and \eqref{32600}, yields that 
\begin{equation}
	\sup_{t \in[0, T]}\eta^2e^{\alpha t}\left(\|\na u\|_{L^p}^2+\|P\|_{L^p}^2\right)+\int_{0}^T\eta e^{\alpha t}\left(\|\na u\|_{L^p}^2+\|P\|_{L^p}^2\right)dt\leq C.\la{lm36b}
\end{equation}

Next, it follows from \eqref{313} and \eqref{32600} that 
\begin{equation}
	\begin{aligned}
		\sup_{t \in[0, T]}&\eta^2e^{\alpha t}\left(\|\na^2u\|_{L^2}^2+\|\na P\|_{L^2}^2\right)+\int_{0}^T\eta e^{\alpha t}\left(\|\na^2u\|_{L^2}^2+\|\na P\|_{L^2}^2\right)dt\\ &\leq C\sup_{t \in[0, T]}\eta^2e^{\alpha t}\|\sqrt{\rho}\dot{u}\|_{L^2}^2+\int_{0}^T\eta e^{\alpha t}\|\sqrt{\rho}\dot{u}\|_{L^2}^2dt\leq C.\la{lm36c}
	\end{aligned}
\end{equation}

Moreover, since  $\dot u|_{\partial\Omega}=0,$  \eqref{31}, \eqref{315},  and Poincar\'{e} inequality, we have   for any $p\in(2,\infty)$
\begin{equation*}\la{why8}
\|\na^2u\|_{L^p}^2+\|\na P\|_{L^p}^2\leq C\|\rho\dot{u}\|^2_{L^p}\leq C\bar{\rho}^2\|\dot u\|_{L^p}^2\leq C\|\na\dot{u}\|^2_{L^2},
\end{equation*}
which, coupled with  \eqref{32600}, gives 
\begin{equation*}\label{lm36d}
	\int_0^T\eta^2 e^{\alpha t}\left(\left\|\na^2 u\right\|_{L^p}^2+\left\|\na P\right\|_{L^p}^2\right)dt\leq C.
\end{equation*}
Combining this,  \eqref{lm36b} and \eqref{lm36c}, we deduce \eqref{lm36a}-\eqref{lm36aaa} directly. Hence, the proof of Lemma \ref{lm33} is finished. \hfill $\Box$

\subsection{The time-weighted gradient estimate of velocity in bounded domains}
\la{sec:bdry-linfty}

In this section, we establish the crucial $L^2(0,T;L^\infty)$-estimate for $t^{\frac12}\na u$ in  bounded domains. The key piont is  to obtain the following  point-wise estimate for $\na u$. Indeed,  our argument relies on the  localized Green
functions estimate
for  the linear Dirichlet Stokes problem (see \eqref{bdrylocal21}),  and local boundary regularity estimate for the homogeneous Stokes system (see \eqref{bdrylocal22}), both on  the  constructed  local \(C^2\) domains.

\begin{lemma}\la{lm:bdrylocal}
	Let $(\rho,u,P)$ be a smooth solution to \eqref{11}-\eqref{14} satisfying \eqref{15}. 
	Then there exists  positive constant $r_{\Om}$,
	such that 
    for any  \(x_0\in\Omega\), and \(r\in(0,r_{\Omega}]\),
     there is a bounded $C^2$
	domain $E_r(x_0)\subset \Om$ satisfying
	\begin{equation}\la{bdrylocal3a}
		x_0\in E_r(x_0)\subset \Om\cap B(x_0,10r),
	\end{equation}
	and   for any  $t\in[0,T]$,
\begin{equation}\la{bdrylocal3}
		|\na u(x_0,t)|
		\le
		Cr^{-1}\|\na u\|_{L^2(\Om)}
		+
		C\int_{E_r(x_0)}\frac{|\rho\dot u(y,t)|}{|x_0-y|}dy,
	\end{equation}
    where the constant $C>0$ depends  only on $\Om$.
\end{lemma}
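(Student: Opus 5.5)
We fix $t$ and regard the momentum equation as the stationary Stokes system $-\Delta u+\nabla P=-\rho\dot u\triangleq f$, $\operatorname{div}u=0$, $u|_{\partial\Om}=0$. The plan is to localize around $x_0$ in a $C^2$ domain $E_r(x_0)$ with uniformly controlled geometry and then split $u$ on that domain.

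\textbf{Construction of $E_r(x_0)$.} Since $\partial\Om$ is smooth and compact, there is $r_\Om>0$ such that for every $z\in\partial\Om$, after a rigid motion, $\Om\cap B(z,20r_\Om)$ is the region above the graph of a $C^2$ function $\gamma$ with $\gamma(0)=0$, $\gamma'(0)=0$, and $\|\gamma\|_{C^2}$ uniformly bounded. We split into two cases.

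\emph{Interior case} ($\operatorname{dist}(x_0,\partial\Om)\ge 2r$). Take $E_r(x_0)=B(x_0,r)$.

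\emph{Boundary case} ($\operatorname{dist}(x_0,\partial\Om)<2r$). Let $z$ be the nearest boundary point. Fix once and for all a smooth reference domain $D$ whose boundary contains the flat piece $[-4,4]\times\{0\}$, and define $E_r(x_0)$ as the image, under $y\mapsto z+ry$, of $D$ bent by $\gamma(r\cdot)/r$. That is, $E_r(x_0)$ is bounded below by $\partial\Om$ near $z$ and closed up smoothly inside $\Om\cap B(x_0,10r)$. The rescaled domains $r^{-1}(E_r(x_0)-z)$ then have $C^2$ norms bounded uniformly in $x_0$ and $r\le r_\Om$, because the rescaled graph has second derivative $r\gamma''(r\cdot)$. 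This gives \eqref{bdrylocal3a}.

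\textbf{Decomposition.} On $E=E_r(x_0)$ we write $u=v+w$ and $P=\pi+q$. Here $(v,\pi)$ solves the Dirichlet Stokes problem on $E$ with force $f$, $\operatorname{div}v=0$ and $v|_{\partial E}=0$. The remainder $(w,q)$ solves the homogeneous Stokes system in $E$, with $w=u$ on $\partial E$; in particular $w=0$ on $\partial E\cap\partial\Om$.

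\textbf{Estimate for $v$.} Rescale $E$ to the unit-size domain $\hat E$. Apply Lemma \ref{thm:CK37} with $A^{\alpha\beta}=\delta_{\alpha\beta}I$ on $\hat E$; its constants are uniform because the $C^2$ character of $\hat E$ is uniform. This gives $|D_yG(x,y)|\le C|x-y|^{-1}$, after using the symmetry \eqref{CK32} to move the derivative onto the correct variable. Differentiating the representation formula with $g=0$ and scaling back, the kernel bound is invariant under dilation ($\nabla v$ scales like $r\cdot r^{-1}$), so
\[
|\nabla v(x_0)|\le C\int_E\frac{|\rho\dot u(y)|}{|x_0-y|}dy .
\]
We also record the energy bound $\|\nabla v\|_{L^2(E)}\le C\|f\|_{L^{4/3}(E)}$, which is scale consistent, or alternatively control $\nabla v$ in $L^2$ through the same kernel bound.

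\textbf{Estimate for $w$.} We need
\[
|\nabla w(x_0)|\le Cr^{-1}\|\nabla w\|_{L^2(E)}\le Cr^{-1}\big(\|\nabla u\|_{L^2}+\|\nabla v\|_{L^2(E)}\big).
\]
After rescaling to unit size, in the boundary case we use Lemma \ref{thm:Galdi-local-boundary-Stokes} with $m$ large enough (or iterate $W^{2,q}$ estimates) on the flat-ish boundary portion $\sigma$, with $u^*=0$ and $f=0$. To remove the pressure term, we normalize $q$ by subtracting its mean and bound it by $\|\nabla w\|_{L^2}$ via Lemma \ref{div0}. Sobolev embedding $W^{2,q}\subset C^1$ for $q>2$ then gives the pointwise bound near $x_0$, which lies at distance at most $2r$ from $\sigma$ and inside the subregion $\Omega_0$. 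The constants are uniform because all rescaled domains are $C^2$-close to the reference one; this can be justified by compactness of the family of graphs, or by a flattening change of variables with uniform coefficients. In the interior case Lemma \ref{lm:stokes-int-w2p-lp} applies directly to $w-\bar w$, combined with Poincaré. Finally, $\|\nabla v\|_{L^2(E)}/r$ is absorbed into the singular-integral term through Hardy–Littlewood–Sobolev/Hölder scaling, or bounded via the Green kernel; this is the one place where some care with the $L^2$ versus kernel form is needed.

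\textbf{Main obstacle.} The uniformity of the Galdi boundary constant across the family $E_r(x_0)$ is the hard step. I would first try to parametrize the rescaled domains by a compact set of $C^{2}$ graphs and argue that the constant depends only on the $C^2$ character, reducing to a fixed domain via a $C^2$-diffeomorphism close to the identity.
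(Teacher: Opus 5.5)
Your localization, the splitting $u=v+w$ on $E_r(x_0)$, the Green-function bound for $\nabla v(x_0)$, and the local estimate $|\nabla w(x_0)|\le Cr^{-1}\|\nabla w\|_{L^2(E)}$ all follow the paper. The gap is in the next step. After the triangle inequality $\|\nabla w\|_{L^2(E)}\le\|\nabla u\|_{L^2}+\|\nabla v\|_{L^2(E)}$ you need
\[
r^{-1}\|\nabla v\|_{L^2(E)}\le C\int_E\frac{|\rho\dot u(y)|}{|x_0-y|}\,dy,
\]
and this inequality is false. Its right-hand side is a weighted $L^1$ norm of $f=\rho\dot u$; it is comparable to $r^{-1}\|f\|_{L^1(E)}$ when $f$ lives at distance $\sim r$ from $x_0$. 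In two dimensions, however, $L^1\not\hookrightarrow H^{-1}$, and $I_1$ maps $L^1$ only into $L^{2,\infty}$, not into $L^2$. Concretely, take $f_\varepsilon=\varepsilon^{-2}\mathbf{1}_{B(y_1,\varepsilon)}e_1$ with $y_1$ at distance $\sim r$ from both $x_0$ and $\partial E$. The weighted integral stays $\approx\pi/|x_0-y_1|$. But the Stokeslet has $|\nabla_x G(x,y_1)|\sim|x-y_1|^{-1}$, so $\|\nabla v_\varepsilon\|_{L^2(E)}^2\sim\log(r/\varepsilon)\to\infty$. Hence neither H\"older/HLS scaling nor integrating the pointwise kernel bound can close this step. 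The $L^{4/3}$ energy bound you record does not help either, because no $L^{4/3}$ norm of $f$ appears in \eqref{bdrylocal3}.

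The missing idea is an orthogonality identity that removes $f$ from the $w$-estimate altogether. Since $(w,P-\pi)$ solves the homogeneous Stokes system in $E$ and $v\in H^1_0(E)$ is divergence free, testing the $w$-equation with $v$ gives $\int_E\nabla w:\nabla v\,dx=0$. Hence
\[
\|\nabla u\|_{L^2(E)}^2=\|\nabla w\|_{L^2(E)}^2+\|\nabla v\|_{L^2(E)}^2,
\]
which is \eqref{why0} in the paper. This yields $\|\nabla w\|_{L^2(E)}\le\|\nabla u\|_{L^2(E)}$ directly. Equivalently, $v$ is the Dirichlet-orthogonal projection of $u|_E$ onto $H^1_{0,\sigma}(E)$, so $\|\nabla v\|_{L^2(E)}\le\|\nabla u\|_{L^2(E)}$ and your triangle inequality also closes.

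Without this identity, the best available bound is $\|\nabla v\|_{L^2(E)}\le Cr\|\rho\dot u\|_{L^2(E)}$, from Poincar\'e on $E$. That proves \eqref{bdrylocal3} only with an extra term $C\|\rho\dot u\|_{L^2(E)}$. Such a variant would still suffice for Lemma \ref{lm:bdrylemma23}, but it is not the stated lemma.

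A smaller omission: Galdi's estimate carries the full $W^{1,2}$ norm on its right side. To replace it by $\|\nabla w\|_{L^2}$ you need a Poincar\'e inequality for functions vanishing only on the boundary portion $\partial E\cap\partial\Om$, with a constant uniform over the rescaled family. The paper supplies this in Lemma \ref{lm:local-poincare-normalized}.
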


\noindent{\it Proof.} For any  $t\in[0,T]$, $x_0\in\Om$,  we will divide the proof into two cases. 

\smallskip
\noindent
{\bf Case 1. Near-boundary points.}

Choose $x_b\in \partial\Om$ such that
\[
|x_0-x_b|=\operatorname{dist}(x_0,\partial\Om).\] For any  $r\in(0,r_0]$, $r_0$ is given  in  Lemma \ref{lm:geompatch}.

If   $\operatorname{dist}(x_0,\partial\Om)<r$, then by Lemma \ref{lm:geompatch}, there exists  a  bounded  $C^2$  domain  $D_{4r}(x_b)$  such that
\begin{equation}\la{bdrylocal9}
	x_0\in\Om\cap B(x_b,r)\subset D_{4r}(x_b)\subset \Om\cap B(x_b,6r)\subset \Om\cap B(x_0,10r),
\end{equation}
and
\begin{equation}\la{bdrylocal10}
	D_{4r}(x_b)\cap B(x_b,2r)=\Om\cap B(x_b,2r).
\end{equation}

We denote 
\[
E_r(x_0)\triangleq D_{4r}(x_b),
\]
and let $(v,\pi)\in H^1_0(E_r(x_0))^2\times L^2(E_r(x_0))$,
with $\int_{E_r(x_0)}\pi\,dx=0$,  be the unique solution of the following
local Dirichlet Stokes {  problem, given by
\cite[Theorem 4.6]{AmroucheGirault1994}.} 
\begin{equation}\la{bdrylocal21}
	\left\{
	\begin{aligned}
		-\Delta v+\na \pi&=-\rho\dot{u},  &&x\in E_r(x_0),\\
		\div v&=0,   &&x\in E_r(x_0),\\
		v&=0,   &&x\in \partial E_r(x_0).
	\end{aligned}
	\right.
\end{equation}

Now,  we  justify    \[	|\nabla v(x_0)|
\le C
\int_{E_r(x_0)}
\frac{|\rho\dot u(y)|}{|x_0-y|}\,dy,\]
where the constant $C$  depends only on $\Om$, independent of
$x_0$ and $r$.

In fact, let
\[ 
z=r^{-1}T_{x_b}(x),\quad
z_0=r^{-1}T_{x_b}(x_0),
\]
where  \(T_{x_b}(x)\) is given in   Lemma \ref{lm:geompatch}. Then \(D_{4r}(x_b)\) is transformed into
\[
\widetilde D_4(x_b,r)
=
r^{-1}T_{x_b}(D_{4r}(x_b)).
\]
By Lemma \ref{lm:geompatch}, {  these complete normalized} domains { have
common} {  graph} {  radius} {  and} {  Dini modulus,} bounded {  diameter} and measure,
and {  a uniform divergence right inverse. All these constants} depend only
on { $\Omega$} { and} { are} {  independent} {  of} {  $x_b,r$}.

Then, for \(z\in \widetilde D_4(x_b,r)\),  we set
\[
\widetilde v(z)
\triangleq
Q_{x_b}v\bigl(T_{x_b}^{-1}(rz)\bigr),
\quad
\widetilde \pi(z)
\triangleq
r\pi\bigl(T_{x_b}^{-1}(rz)\bigr),
\]
which implies 
\[
\left\{
\begin{aligned}
	-\Delta_z\widetilde v+\nabla_z\widetilde\pi
	&=
	\widetilde F,
	&& z\in \widetilde D_4(x_b,r),\\
	\div_z\widetilde v&=0,
	&& z\in \widetilde D_4(x_b,r),\\
	\widetilde v&=0,
	&& z\in \partial\widetilde D_4(x_b,r),
\end{aligned}
\right.
\]
where
\[
\widetilde F(z)
\triangleq
-r^2 Q_{x_b}
\bigl(\rho\dot u\bigr)
\bigl(T_{x_b}^{-1}(rz)\bigr).
\]

By Lemma \ref{thm:CK37} with
$L=L^*=D_\alpha(\delta_{\alpha\beta}I_2D_\beta)=\Delta_z$,
applied to $(\widetilde v,-\widetilde\pi)$ with $f=-\widetilde F$ and $g=0$,
there exists a unique Green function $(G,\Pi)$. By \eqref{CK32},
$G(\zeta,z)^\top=G(z,\zeta)$, and hence
\begin{equation*}\label{greenrepr2}
\begin{aligned}
\widetilde v(z)
&=-\int_{\widetilde D_4(x_b,r)}G(\zeta,z)^\top
\bigl(-\widetilde F(\zeta)\bigr)\,d\zeta\\
&=\int_{\widetilde D_4(x_b,r)}G(z,\zeta)\widetilde F(\zeta)\,d\zeta,
\quad \text{for a.e. }z\in\widetilde D_4(x_b,r).
\end{aligned}
\end{equation*}
and
\begin{equation}\label{greenrepr4}
	|\nabla_z G(z,\zeta)|+|\Pi(z,\zeta)|
	\le \frac{C}{|z-\zeta|},
	\quad \forall z,\zeta\in \widetilde D_4(x_b,r),\quad  z\neq \zeta.
\end{equation}
 Since $\widetilde F$ is bounded and $|z-\zeta|^{-1}$ is locally integrable in two dimensions, \eqref{greenrepr4} permits differentiation in distributions. By continuity, we obtain 
\begin{equation*}\label{greenrepr3}
	\nabla_z \widetilde v(z)=\int_{ \widetilde D_4(x_b,r)} \nabla_z G(z,\zeta)\widetilde F(\zeta)\,d\zeta,
	\quad \forall z\in  \widetilde D_4(x_b,r).
\end{equation*}
Combining this with  \eqref{greenrepr4}, yields
\begin{equation}\la{w1}
	|\nabla_z\widetilde v(z)|
	\le
	C
	\int_{\widetilde D_4(x_b,r)}
	\frac{|\widetilde F(\zeta)|}{|z-\zeta|}\,d\zeta,\quad \forall z\in  \widetilde D_4(x_b,r),
\end{equation}
here \(C\) is independent of \(x_b\) and  \(r\).

Since
\[
\nabla_z\widetilde v(z)
=
rQ_{x_b}(\nabla_x v(x))Q_{x_b}^{T},
\]
and \(Q_{x_b}\) is orthogonal, one can  check that 
\[
|\nabla_z\widetilde v(z)|
=
r|\nabla_x v(x)|,
\]
which, together with \eqref{bdrylocal9} and \eqref{w1}, indicates
\begin{equation}\la{w2}
	\begin{aligned}
		|\nabla_x v(x_0)|
		&\le
		Cr^{-1}
		\int_{\widetilde D_4(x_b,r)}
		\frac{
			r^2
			|\rho\dot u(T_{x_b}^{-1}(r\zeta))|
		}{
			|z_0-\zeta|
		}
		\,d\zeta .
	\end{aligned}
\end{equation}

Finally, changing variables
\[
y=T_{x_b}^{-1}(r\zeta),
\quad
dy=r^2\,d\zeta,
\]
and using the fact that \(Q_{x_b}\) is orthogonal, we get
\[
|z_0-\zeta|
=
r^{-1}
|T_{x_b}(x_0)-T_{x_b}(y)|
=
r^{-1}|x_0-y|.
\]
Then,  \eqref{w2} gives
\begin{equation}
	\begin{aligned}
		|\nabla v(x_0)|
		&\le
		Cr^{-1}
		\int_{D_{4r}(x_b)}
		\frac{
			|\rho\dot u(y)|
		}{
			r^{-1}|x_0-y|
		}
		\,dy  \\
		&=
		C
		\int_{D_{4r}(x_b)}
		\frac{|\rho\dot u(y)|}{|x_0-y|}\,dy\\ &=
		C
		\int_{E_r(x_0)}
		\frac{|\rho\dot u(y)|}{|x_0-y|}\,dy .
	\end{aligned}\la{why10}
\end{equation}
Here, the constant \(C\) is  independent of \(x_0\), \(x_b\),
and \(r\).

Denote
\[
w\triangleq u-v.
\]
By  \eqref{11} and \eqref{bdrylocal21},  we  derive  that  $(w,P-\pi)$ satisfies
\begin{equation}\la{bdrylocal22}
	\left\{
	\begin{aligned}
		-\Delta w+\na(P-\pi)&=0,\quad  x\in E_r(x_0),\\ \div w&=0, \quad  x\in E_r(x_0).
	\end{aligned}\right.
\end{equation}
Meanwhile, combining   \eqref{14}, \eqref{bdrylocal10} and $\eqref{bdrylocal21}_3$ yields 
\[
w=0\quad \text{on }\partial E_r(x_0)\cap B(x_b,2r).
\]

So, it suffices to prove  that 
$$|\na w(x_0)|\le Cr^{-1}\|\na u\|_{L^2(\Om)},$$
where the constant $C$  depends only on $\Om$. 

To this end, we take
\[\widetilde w(z)
\triangleq
Q_{x_b}w\bigl(T_{x_b}^{-1}(rz)\bigr), \quad q(x)
\triangleq(P-\pi)(x),
\quad
\widetilde q(z)
\triangleq
r\,q\bigl(T_{x_b}^{-1}(rz)\bigr), 
\] then $(\widetilde w,\widetilde q)$ solves the homogeneous Stokes system
\begin{equation}\la{hy0}
	\left\{	\begin{aligned}
	-\Delta_z \widetilde w+\na_z \widetilde q&=0,\quad  z\in\widetilde D_4(x_b,r),\\\div_z \widetilde w&=0,
	\quad z\in\widetilde D_4(x_b,r),\\\widetilde w&=0,
	\quad z\in\partial \widetilde D_4(x_b,r)\cap B(0,2).
	\end{aligned}\right.
\end{equation}

Set
\[
U_\beta\triangleq\widetilde D_4(x_b,r)\cap B(0,\beta),
\quad 1\le\beta\le2,
\]
and 
\[
\widetilde q_0=\widetilde q-(\widetilde q)_{\widetilde D_4(x_b,r)}, \quad
( \widetilde q)_{\widetilde D_4(x_b,r)}\triangleq\average{\widetilde D_4(x_b,r)} \widetilde q\,dy .
\]
In particular, since $x_0\in\Omega\cap B(x_b,r)$, one has
$z_0\in U_1$.

 By  Lemma  \ref{thm:Galdi-local-boundary-Stokes}  and Sobolev embedding, we obtain
\begin{equation}\label{eq:stokes-bootstrap}
	\begin{aligned}
		\|\widetilde{w}\|_{W^{2,4}(U_{5/4})}
		+\| \widetilde{q}_0\|_{W^{1,4}(U_{5/4})}
		&\leq C\|\widetilde{w}\|_{W^{1,4}(U_{3/2})}
		+C
		\|\widetilde q_0\|_{L^4(U_{3/2})}\\&\leq C\|\widetilde{w}\|_{W^{2,2}(U_{7/4})}
		+C\|\widetilde{q}_0\|_{W^{1,2}(U_{7/4})} 
			\\&\le C\|\widetilde w\|_{W^{1,2}(\widetilde D_4(x_b,r))}+C\|\widetilde q_0\|_{L^2(\widetilde D_4(x_b,r))}, 
	\end{aligned}
\end{equation}
Here, for $1\le a<b\le2$,
\[
\partial U_a\cap\partial\widetilde D_4(x_b,r)
\subset\partial\widetilde D_4(x_b,r)\cap\overline{B(0,a)}
\subset\subset\partial\widetilde D_4(x_b,r)\cap B(0,b),
\]
with compact inclusion relative to $\partial\widetilde D_4(x_b,r)$, and
\[
\operatorname{dist}\left(\overline{U_a},
\overline{\partial U_b\setminus\partial\widetilde D_4(x_b,r)}\right)\ge b-a.
\]
Thus Lemma \ref{thm:Galdi-local-boundary-Stokes} applies with
$\Omega=\widetilde D_4(x_b,r)$ and $m=0$, first with
$(\Omega_0,\Omega^0)=(U_{7/4},U_2)$ and exponent $2$, and then with
$(\Omega_0,\Omega^0)=(U_{5/4},U_{3/2})$ and exponent $4$, using the zero trace in \eqref{hy0}$_3$.
For the intermediate Sobolev embeddings, we multiply by a fixed smooth radial cutoff
supported in $B(0,7/4)$ and equal to one on $B(0,3/2)$, and extend by zero across
the artificial boundary inside $\widetilde D_4(x_b,r)$.
By \eqref{Dtilde-def} and the uniform $C^2$ bounds in Lemma \ref{lm:geompatch},
the complete domains $\widetilde D_4(x_b,r)$ have uniform boundary charts and
Sobolev extension bounds. Together with the fixed separation $1/4$,
this shows that the constants in \eqref{eq:stokes-bootstrap} are independent of $x_b$ and $r$.

In view of \eqref{hy0}$_3$     and  Lemma \ref{lm:local-poincare-normalized}, one can deduce  
\begin{equation*}\|\widetilde w\|_{W^{1,2}(\widetilde D_4(x_b,r))} \le C\|\na\widetilde w\|_{L^{2}(\widetilde D_4(x_b,r))},
\end{equation*}
Multiplying $\widetilde w$ by a fixed smooth radial cutoff supported in $B(0,5/4)$
and equal to one on $B(0,1)$, extending by zero inside $\widetilde D_4(x_b,r)$,
and using $W^{2,4}(\widetilde D_4(x_b,r))\hookrightarrow
C^1(\overline{\widetilde D_4(x_b,r)})$ and \eqref{eq:stokes-bootstrap}, we obtain 
 \begin{equation}\label{eq:unit-scale-gradient}
	|\nabla_z \widetilde{w}(z)|
	\le C	\|\na\widetilde w\|_{L^{2}(\widetilde D_4(x_b,r))}+C\|\widetilde q_0\|_{L^2(\widetilde D_4(x_b,r))},\quad \forall z\in U_1.
\end{equation}

Now, we control the pressure term on the right-hand side of \eqref{eq:unit-scale-gradient}. 

By Lemma \ref{div0},  there exists a 
 function $ h\in [H_0^1(\widetilde D_4(x_b,r))]^2$ satisfying \[ \operatorname{div} h=\widetilde q_0,\]   and
\begin{equation}\label{div3}
 	\|\na h\|_{L^2(\widetilde D_4(x_b,r))} \leq C\|\widetilde q_0\|_{L^2(\widetilde D_4(x_b,r))}.
 \end{equation}

Moreover, multiplying \eqref{hy0}$_1$ by $h$ and integrating the resulting equality, we have
\begin{equation*}
    \begin{aligned}\left|\int_{\widetilde D_4(x_b,r)} {\widetilde q_0}^2\,dy\right|&=\left|\int_{\widetilde D_4(x_b,r)} \widetilde q_0\,\diver h\,dy\right|=
\left|\int_{\widetilde D_4(x_b,r)}\nabla  \widetilde w:\nabla h\,dy\right|
\\&\le
\|\nabla  \widetilde w\|_{L^2(\widetilde D_4(x_b,r))}\|\nabla h\|_{L^2(\widetilde D_4(x_b,r))},
    \end{aligned}
\end{equation*}
which, along with \eqref{div3} implies
\begin{equation*}\label{eq:pressure-estimate}
	\| \widetilde q_0\|_{L^2(\widetilde D_4(x_b,r))}
	\le C\|\nabla  \widetilde w\|_{L^2(\widetilde D_4(x_b,r))}.
\end{equation*}
This and \eqref{eq:unit-scale-gradient} lead to  \begin{equation*}\label{hy2}
	|\nabla_z \widetilde{w}(z)| 
	\le C\|\nabla_z \widetilde{w}\|_{L^2(\widetilde D_4(x_b,r))},\quad \forall z\in U_1,
\end{equation*}
where  the constant $C$ depends only on $\Om$.

Scaling back the variables yields
\[
|\nabla_x w(x_0)|=\frac1r|\nabla_z\widetilde w(z_0)|,
\]
and 
\[
\|\nabla_z\widetilde w\|_{L^2(\widetilde D_4(x_b,r))}
=
\|\nabla_x w\|_{L^2(D_{4r}(x_b))}.
\]
Therefore,    
\begin{equation}\la{why3}
	|\na w(x_0)|\le Cr^{-1}\|\na w\|_{L^2(E_r(x_0))}.
\end{equation}

Testing $\eqref{bdrylocal22}_1$ by $v$, and using $\eqref{bdrylocal21}_2$, $\eqref{bdrylocal21}_3$,  one has
\begin{equation*}
	\int_{E_r(x_0)} \na w:\na v\,dx=0,	
\end{equation*}
which gives
\begin{equation}\la{why0}
	\int_{E_r(x_0)}|\na u|^2dx
	=
	\int_{E_r(x_0)}|\na w|^2dx+\int_{E_r(x_0)}|\na v|^2dx.
\end{equation}
Thus, \eqref{why3} and \eqref{why0} yield
\begin{equation*}
	|\na w(x_0)|\le Cr^{-1}\|\na u\|_{L^2(E_r(x_0))}\le Cr^{-1}\|\na u\|_{L^2(\Om)},
\end{equation*}
due to $E_r(x_0)\subset \Om. $ This and \eqref{why10} show that
\begin{equation}
	\la{hy101}	|\na u(x_0,t)|
	\le
	Cr^{-1}\|\na u\|_{L^2(\Om)}
	+
	C\int_{E_r(x_0)}\frac{|\rho\dot u(y,t)|}{|x_0-y|}dy.
\end{equation}
{\bf Case 2. Interior points.} 

 For any  $r\in(0, \frac{1}{2}\operatorname{diam}\Om)$, if $\operatorname{dist}(x_0,\partial\Om)\ge r$, we reset
\[
E_r(x_0)\triangleq B(x_0,r).
\]
Then $x_0\in E_r(x_0)\subset \Om\cap B(x_0,2r)\subset \Om\cap B(x_0,10r)$. For simplicity,  we still  denote $(v,\pi)$ as the solution to \eqref{bdrylocal21}.

 For $z\in B(0,1)$, we set
\[
\widetilde v(z)\triangleq v(x_0+rz),\quad
\widetilde \pi(z)\triangleq r\pi(x_0+rz),\quad
\widetilde F(z)\triangleq-r^2\rho\dot u(x_0+rz).
\]
 Then $(\widetilde v,\widetilde \pi)$ satisfies
\begin{equation}\la{hy4}
\left\{
\begin{aligned}
	-\Delta_z \widetilde v+\nabla_z \widetilde\pi
	&=\widetilde F,  &&z\in B(0,1),\\
	\div_z \widetilde v&=0, &&z\in  B(0,1),\\
	\widetilde v&=0, &&z\in\partial  B(0,1).
\end{aligned}
\right.	
\end{equation}
Similar as \eqref{w1},  we observe  
\[
|\nabla_z \widetilde v(0)|
\le
C\int_{B(0,1)}\frac{|\widetilde F(\zeta)|}{|\zeta|}\,d\zeta ,
\]
where the constant $C$  is independent of $x_0$ and $r$.

Note that 
\[
\nabla_z \widetilde v(0)=r\nabla_x v(x_0),
\]
then
\begin{equation}
\begin{aligned}\la{bdrylocal31}
	|\nabla_x v(x_0)|
	&\le
	Cr^{-1}\int_{B(0,1)}
	\frac{r^2|\rho\dot u(x_0+r\zeta)|}{|\zeta|}\,d\zeta  \\
	&=
	C\int_{B(x_0,r)}
	\frac{|\rho\dot u(y)|}{|x_0-y|}\,dy \\&=
	C\int_{E_r(x_0)}
	\frac{|\rho\dot u(y)|}{|x_0-y|}\,dy .
\end{aligned}
\end{equation}
Here the constant $C$ is independent of $x_0$ and $r$.

Denote
\[
w\triangleq u-v,
\]
and for $z\in B(0,1)$, \[ \widetilde{w}(z)\triangleq\frac1r w(x_0+rz),\quad
\widetilde{q}(z)\triangleq (P-\pi)(x_0+rz).
\]
 Then  $(\widetilde{w}, \widetilde{q})$ satisfies  by  \eqref{11} and \eqref{hy4},
\begin{equation*}
\la{hy5}
	\left\{	\begin{aligned}
		-\Delta_z \widetilde{w}+\nabla_z \widetilde{q}&=0, &&  z\in  B(0,1),\\
		\diver_z \widetilde{w}&=0, && z\in  B(0,1).
	\end{aligned}\right.
\end{equation*}

By virtue of Lemma \ref{lm:stokes-int-w2p-lp} and Poincar\'e  inequality on the ball, we find \[
\|\widetilde w-\widetilde w_{B(0,1)}\|_{W^{2,4}(B(0,1/2))}
\le C\|\widetilde w-\widetilde w_{B(0,1)}\|_{L^4(B(0,1))}\le
C\|\na_z\widetilde  w\|_{L^2(B(0,1))},
\]
which, together with  the Sobolev embedding   $W^{2,4}(B(0,1/2))\hookrightarrow C^{1}(\overline{B(0,1/2)})$, 
 implies 
\begin{equation*}\la{locC110}
	|\na_z \widetilde w(z)|\le C\|\na_z \widetilde w\|_{L^2(B(0,1))},\quad\forall z\in B(0,1/2).
\end{equation*}

Rescaling this back to the  variables, we have
\[
|\nabla_x w(x)|
=
|\nabla_z \widetilde{w}(z)|,\]
and
\[
\|\nabla_z \widetilde{w}\|_{L^2(B(0,{1}))}^2=
\int_{B(0,{1})}|\nabla_x w(x_0+rz)|^2\,dz
=
r^{-2}\int_{B(x_0,r)}|\nabla_x w(x)|^2\,dx.\]
Thus, 
\begin{equation*}\la{locC11}
	|\na w(x_0)|\le Cr^{-1}\|\na w\|_{L^2(E_r(x_0))},
\end{equation*}
 which, combined with \eqref{why0}, yields
\begin{equation}\la{W}
|\na w(x_0)|\le Cr^{-1}	\|\na w\|_{L^2(E_r(x_0))}\le Cr^{-1}\|\na u\|_{L^2(\Om)}.
\end{equation}

Finally, it follows  from  \eqref{bdrylocal31} and  \eqref{W}  that 
\begin{equation*}\la{why101}
	|\na u(x_0,t)|
	\le
	Cr^{-1}\|\na u\|_{L^2(\Om)}
	+
	C\int_{E_r(x_0)}\frac{|\rho\dot u(y,t)|}{|x_0-y|}dy.
\end{equation*}

Taking $r_\Om\triangleq\min\{r_0, \frac12\operatorname{diam}\Om\}$ and combining the two cases above,   we complete  the  proof  of Lemma \ref{lm:bdrylocal}. \hfill $\Box$

Armed with  Lemma \ref{lm:bdrylocal}, we proceed to derive the  crucial estimate \(t^{1/2}\nabla u\in L^2(0,T;L^\infty(\Omega))\) in  bounded domains.
\begin{lemma}\la{lm:bdrylemma23}
	Let $(\rho,u,P)$ be a smooth solution to \eqref{11}-\eqref{14} on $\Om\times[0,T]$ satisfying \eqref{15}.
	Then there exists a positive constant $C$ depending only on $\bar\rho$, $\left\|u_0\right\|_{L_\sigma^2}$,    and $\Omega$  such that
	\begin{equation}\la{bdrylemma23b}
		\int_0^Tt\|\na u\|_{L^\infty}^2dt\le C.
	\end{equation}
\end{lemma}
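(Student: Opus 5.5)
Evaluate the pointwise bound \eqref{bdrylocal3} of Lemma \ref{lm:bdrylocal} at every $x_0\in\Omega$, with a radius $r=r(t)$ chosen depending on time. Then bound the singular integral
$$\int_{E_r(x_0)}\frac{|\rho\dot u(y,t)|}{|x_0-y|}\,dy$$
by norms of $\dot u$ already controlled in Lemmas \ref{lemma 3.40}--\ref{lm33}, take the supremum in $x_0$, square, multiply by $t$ and integrate. Since $E_r(x_0)\subset\Omega\cap B(x_0,10r)$, the integral runs over a set of diameter $\lesssim r$. This is what makes the first term $Cr^{-1}\|\nabla u\|_{L^2}$ harmless. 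With $r^2\simeq\min\{t,r_\Omega^2\}$ it contributes $t r^{-2}\|\nabla u\|_{L^2}^2\le C\|\nabla u\|_{L^2}^2+Ct\|\nabla u\|_{L^2}^2$, which is integrable on $(0,T)$ by \eqref{33}.

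For the singular integral I follow the decomposition announced in the introduction. Write $\dot u=c+(\dot u-c)$, where $c$ is the average of $\dot u$ over a ball $B\supset E_r(x_0)$ of radius $10r$; here $\dot u$ is extended by zero, which is legitimate since $\dot u|_{\partial\Omega}=0$.

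\emph{Remainder.} By H\"older with $|x_0-\cdot|^{-1}\in L^{p'}(B)$, $p>2$, and the scale-invariant Poincar\'e--Sobolev inequality on $B$,
$$\int_{B}\frac{\bar\rho\,|\dot u-c|}{|x_0-y|}\,dy\le Cr^{1-2/p}\|\dot u-c\|_{L^p(B)}\le Cr\,\|\nabla\dot u\|_{L^2}.$$
With $r\simeq t^{1/2}$ this gives $t\,r^2\|\nabla\dot u\|_{L^2}^2\le Ct^2\|\nabla\dot u\|_{L^2}^2$, which is integrable by \eqref{32600}.

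\emph{Average.} Here $\int_B\rho|c|/|x_0-y|\,dy\le C\bar\rho\, r|c|$. The estimate $r|c|\le C\|\dot u\|_{L^2(B)}$ fails to carry the weight $\rho$, and $\|\dot u\|_{L^2}$ is only bounded through $\|\nabla\dot u\|_{L^2}$. That would leave $t\|\nabla\dot u\|_{L^2}^2$, which is not integrable near $t=0$.

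This is the main obstacle. Any crude splitting, into near and far parts or by H\"older alone, loses a factor $\log(1/t)$ in front of $t\|\sqrt\rho\dot u\|_{L^2}^2$, and only $\int_0^T t\|\sqrt\rho\dot u\|_{L^2}^2\,dt\le C$ is available from \eqref{313}. My first attempt will be Lorentz-space duality: $|x_0-\cdot|^{-1}\in L^{2,\infty}$, so the integral is bounded by $C\|\rho\dot u\|_{L^{2,1}(B)}$. I will then estimate $\|\rho\dot u\|_{L^{2,1}}$ by real interpolation between an $L^{q_0}$ norm with $q_0<2$ and an $L^{q_1}$ norm with $q_1>2$. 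The $L^{q_0}$ norm is controlled by $\|\sqrt\rho\dot u\|_{L^2}\|\sqrt\rho\|_{L^{2q_0/(2-q_0)}}$ and the $L^{q_1}$ norm by $\|\nabla\dot u\|_{L^2}$. Applying this to $\dot u-c$ gains the Poincar\'e factor of $r$, and the averaged piece $\rho c$ is handled by choosing $c$ as a $\rho$-weighted local mean whenever the local mass allows.

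Using the time weights, I then aim to split $t\|\nabla u\|_{L^\infty}^2$ into the three integrable quantities $\|\nabla u\|_{L^2}^2$, $t\|\sqrt\rho\dot u\|_{L^2}^2$ and $t^2\|\nabla\dot u\|_{L^2}^2$. Where the pure product structure fails, I will use the sup bounds $\sup_t t\|\nabla u\|_{L^2}^2\le C$ and $\sup_t t^2\|\sqrt\rho\dot u\|_{L^2}^2\le C$ from \eqref{313}--\eqref{32600} to absorb the leftover powers of $t$.

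For $t\ge1$ everything is easier: take $r=r_\Omega$ fixed and use \eqref{lm36aaa} together with $W^{2,p}\hookrightarrow W^{1,\infty}$, which gives $\int_1^T t\|\nabla u\|_{L^\infty}^2\,dt\le C\int_1^T e^{\alpha t}\|\nabla^2u\|_{L^p}^2\,dt\le C$. So only the short-time range $(0,1)$ requires the delicate argument above.
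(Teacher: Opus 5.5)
\textbf{There is a genuine gap, at the averaged piece.} Your structure is sound: the pointwise bound \eqref{bdrylocal3}, the split $\dot u=c+(\dot u-c)$ with $c$ the mean of the zero extension over $B(x_0,10r)$, Poincar\'e for the remainder, and the $t\ge1$ range via \eqref{lm36aaa} are all fine. But the step you flag as the main obstacle, $\int_E\rho|c|\,|x_0-y|^{-1}dy$, is left open, and neither tentative fix closes it.

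\begin{itemize}
\item Lorentz duality on the undivided $\rho\dot u$, followed by interpolation with the $L^{q_1}$ endpoint bounded through $\|\na\dot u\|_{L^2}$, gives $C\|\sqrt\rho\dot u\|_{L^2}^{\theta}\|\na\dot u\|_{L^2}^{1-\theta}r^{(1-\theta)(1-2/q_1)}$. With $r^2\simeq t$ this misses the integrable quantity $(t\|\sqrt\rho\dot u\|_{L^2}^2)^\theta(t^2\|\na\dot u\|_{L^2}^2)^{1-\theta}$ by a factor $t^{-2(1-\theta)/q_1}$. The sup bounds in \eqref{313}--\eqref{32600} cannot absorb a negative power of $t$.
\item The $\rho$-weighted mean only helps when $m=\int_B\rho\gtrsim r^2$. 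The available bounds $|c_\rho|\le m^{-1/2}\|\sqrt\rho\dot u\|_{L^2}$ and $|c-c_\rho|\le Cm^{-1/2}r\|\na\dot u\|_{L^2}$, paired with the crude bound $\int_B\rho|x_0-y|^{-1}dy\le C\bar\rho r$, degenerate as $m/r^2\to0$. That low-mass (vacuum) regime is exactly what must be handled, and you give no argument for it.
\end{itemize}

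\textbf{The missing idea} is to apply the $L^{2,1}$--$L^{2,\infty}$ duality to $\rho$ alone, which retains the local mass:
\[
\int_E\frac{\rho(y)}{|x_0-y|}\,dy\le\|\rho\|_{L^{2,1}(E)}\bigl\||x_0-\cdot|^{-1}\bigr\|_{L^{2,\infty}}\le C\bar\rho^{1/2}\|\rho\|_{L^1(E)}^{1/2}.
\]
Then observe that
\[
|c|\,\|\rho\|_{L^1(E)}^{1/2}=\|\sqrt\rho\,c\|_{L^2(E)}\le\|\sqrt\rho(\dot u-c)\|_{L^2(E)}+\|\sqrt\rho\dot u\|_{L^2}\le C\bar\rho^{1/2}r\|\na\dot u\|_{L^2}+\|\sqrt\rho\dot u\|_{L^2}.
\]
This handles the unweighted mean at every local mass, with no case distinction, and gives
\[
\|\na u\|_{L^\infty}\le Cr^{-1}\|\na u\|_{L^2}+Cr\|\na\dot u\|_{L^2}+C\|\sqrt\rho\dot u\|_{L^2},\qquad 0<r\le r_\Omega .
\]
Your own interpolation yields exactly this bound if you apply it to $\rho c$ using $\|\rho\|_{L^q(E)}\le\bar\rho^{1-1/q}\|\rho\|_{L^1(E)}^{1/q}$ at both endpoints, rather than passing through $\|\na\dot u\|_{L^2}$.

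From here your choice $r=\min\{t^{1/2},r_\Omega\}$ closes directly. On $(0,r_\Omega^2]$ it gives $t\|\na u\|_{L^\infty}^2\le C(\|\na u\|_{L^2}^2+t^2\|\na\dot u\|_{L^2}^2+t\|\sqrt\rho\dot u\|_{L^2}^2)$, which is integrable by \eqref{33}, \eqref{313} and \eqref{32600}. The paper instead optimizes $r=(\|\na u\|_{L^2}/(\bar\rho\|\na\dot u\|_{L^2}))^{1/2}$ and uses Cauchy--Schwarz in time; the two are equivalent.
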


\noindent{\it Proof.} For any  $t\in[0,T]$, $x_0\in\Om$, and $0<r\le r_{\Om},$ 
we set
  $$E\triangleq \Omega\cap B(x_0,10r), \quad a_r\triangleq\frac{1}{|B(x_0,10r)|}\int_{B(x_0,10r)}\widetilde{\dot{u}}(y,t)dy,$$
   where $r_{\Om}$ is given in Lemma \ref{lm:bdrylocal} and $\widetilde{\dot{u}}$ denotes the zero extension of $\dot{u}$ outside \(\Omega\). 
  
 First, \eqref{bdrylocal3} and $E_r(x_0)\subset E$ give 
\begin{equation}\la{bdrylemma23c}
	\begin{aligned}
		|\na u(x_0,t)|
		\le
		Cr^{-1}\|\na u\|_{L^2(\Om)}
		+CK_1(x_0,t,r)+CK_2(x_0,t,r),
	\end{aligned}
\end{equation}
where
\[
K_1(x_0,t,r)\triangleq \int_E\frac{\rho|\dot u-a_r|}{|x_0-y|}dy,
\quad
K_2(x_0,t,r)\triangleq \int_E\frac{\rho|a_r|}{|x_0-y|}dy.
\]

By the Poincar\'e inequality on the ball 
\[
\|\widetilde{\dot{u}}- a_r\|_{L^2(B(x_0,10r))}\le Cr\|\na \widetilde{\dot{u}}\|_{L^2(B(x_0,10r))}= Cr\|\na {\dot{u}}\|_{L^2(E)},
\]
and the Gagliardo-Nirenberg inequality, 
we have   for any $p>2$,
\begin{equation*}
	\|\widetilde{\dot{u}}-a_r\|_{L^p(B(x_0,10r))}
	\le C\|\widetilde{\dot{u}}-a_r\|_{L^2(B(x_0,10r))}^{2/p}\|\na\widetilde{\dot{u}}\|_{L^2(B(x_0,10r))}^{1-2/p}\le
	Cr^{2/p}\|\na \dot u(\cdot,t)\|_{L^2(E)}.
\end{equation*}
This,  together with 
\begin{equation*}\la{bdrylemma23e}
	\bigl\||x_0-\cdot|^{-1}\bigr\|_{L^{p'}(B(x_0,10r))}\le Cr^{2/p'-1}, \quad1/p+1/{p'}=1
\end{equation*}
and \eqref{31}, gives 
\begin{equation}\la{bdrylemma23f}
	K_1(x_0,t,r)
	\le
	\bar\rho \|\widetilde{\dot{u}}-a_r\|_{L^p(B(x_0,10r))}
	\bigl\||x_0-\cdot|^{-1}\bigr\|_{L^{p'}(B(x_0,10r))}
	\le
	C\bar\rho\, r \|\na \dot u(\cdot,t)\|_{L^2}.
\end{equation}

We next estimate $K_2$. 

By the interpolation inequality
in the Lorentz space   and \eqref{31}, we observe 
\begin{equation}\label{why5}
\begin{aligned}
	K_2(x_0,t,r)
&=
|a_r|\int_E \frac{\rho(y,t)}{|x_0-y|}\,dy\\&\le 
|a_r|\|\rho\|_{L^{2,1}(E)}\bigl\||x_0-\cdot|^{-1}\bigr\|_{L^{2,\infty}(E)}\\&\le
C\|\rho\|_{L^{\infty}(E)}^{1/2}\|\rho\|_{L^{1}(E)}^{1/2}|a_r|\\&\le
C\bar\rho^{1/2}\|\rho\|_{L^1(E)}^{1/2}|a_r|.
\end{aligned}
\end{equation}
Note that 
\begin{equation}\la{why4}
	\|\rho\|_{L^1(E)}|  a_r|^2
	=
	\int_E \rho |  a_r|^2dy
	\le
	2\int_E \rho|\dot u-a_r|^2dy
	+
	2\int_E \rho|\dot u|^2dy.
\end{equation}
And  the
Poincar\'e inequality on the ball gives 
\[
\|\dot u-a_r\|_{L^2(E)}\le \|\widetilde{\dot{u}}- a_r\|_{L^2(B(x_0,10r))}\le Cr\|\na \widetilde{\dot{u}}\|_{L^2(B(x_0,10r))}= Cr\|\na {\dot{u}}\|_{L^2(E)},
\]
which, together with \eqref{31}, implies  
\begin{equation}\la{i}
	\int_E \rho|\dot u-a_r|^2dy\le C\bar\rho\, r^2\|\na \dot u\|_{L^2}^2.
\end{equation}
Then, \eqref{why4} and \eqref{i} indicate
\[
\|\rho\|_{L^1(E)}^{1/2}|a_r|
\le
C\bar\rho^{1/2}r\|\na \dot u\|_{L^2}+C\|\sqrt{\rho}\dot u\|_{L^2}.
\]
The combination of  this and \eqref{why5} leads to 
\begin{equation}\la{bdrylemma23g}
	K_2(x_0,t,r)\le C\bar\rho\, r\|\na \dot u\|_{L^2}+C\bar\rho^{1/2}\|\sqrt{\rho}\dot u\|_{L^2}.
\end{equation}
Inserting  \eqref{bdrylemma23f} and \eqref{bdrylemma23g} into \eqref{bdrylemma23c}, we find
\begin{equation}\la{bdrylemma23h}
	\|\na u\|_{L^\infty}
	\le
	Cr^{-1}\|\na u\|_{L^2}+C\bar\rho\,r\|\na \dot u\|_{L^2}+C\bar\rho^{1/2}\|\sqrt{\rho}\dot u\|_{L^2}.
\end{equation}

{ If} { $\bar\rho=0$}, {  \eqref{33}} { and} {  smoothness} {  give} {  $\nabla u=0$,
so} { \eqref{bdrylemma23b}} {  holds. Suppose $\bar\rho>0$.
If $\|\nabla u\|_{L^2}=0$}, then {  $u=0$} {  and
$\|\nabla u\|_{L^\infty}=0$ at this time.
When both $\|\nabla u\|_{L^2}$ and $\|\nabla\dot u\|_{L^2}$ are positive, set}
\[
R\triangleq \left(\frac{\|\na u\|_{L^2}}{\bar\rho \|\na \dot u\|_{L^2}}\right)^{1/2}.
\]

If   $\|\na u\|_{L^2}=0$, since $u|_{\partial\Omega}=0,$ we find  $u=0$ a.e. in $\Om$, then  \eqref{bdrylemma23b} holds directly.
 
If  $\|\na u\|_{L^2}>0$,  $\|\na \dot u\|_{L^2}>0$, and  $R\leq r_{\Om}$,  choosing $r=R$ in \eqref{bdrylemma23h} yields
\begin{equation}\la{bdrylemma23i0}
\|\na u \|_{L^\infty}
\le
C\bar\rho^{1/2}\|\na u\|_{L^2}^{1/2}\|\na \dot u\|_{L^2}^{1/2}+C\bar\rho^{1/2}\|\sqrt{\rho}\dot u\|_{L^2}.
\end{equation}

If  $\|\na u\|_{L^2}>0$,     $\|\na \dot u\|_{L^2}>0$, $R>r_{\Om}$, or $\|\na u\|_{L^2}>0$,  $\|\na \dot u\|_{L^2}=0$,  then 
\begin{equation*}\la{why6}
	\|\na u\|_{L^2}>\bar\rho r_\Om^2 \|\na \dot u\|_{L^2},
\end{equation*}
which combined with \eqref{bdrylemma23h} ($r=r_{\Om}$)     leads to 
\begin{equation}\label{hy60}
	\|\na u \|_{L^\infty}
	\le
	C{r_\Omega}^{  {-1}}\bar\rho^{1/2}\|\na u\|_{L^2}+C\bar\rho^{1/2}\|\sqrt{\rho}\dot u\|_{L^2}.
\end{equation}

Multiplying \eqref{11}$_2$ by $u$ and integrating the resulting equality over $\Om$, give 
\begin{equation*}
	\int_\Om \left|\na u\right|^2dx=-\int_\Om \rho \dot u\cdot udx\leq\|\rho\dot u\|_{L^2}\|u\|_{L^2}\leq C\|\rho\dot u\|_{L^2}\|\na u\|_{L^2}\leq\frac12\|\na u\|_{L^2}^2+C\bar\rho\|\sqrt{\rho}\dot{u}\|_{L^2}^2,
\end{equation*}
where we have used \eqref{31} and the
Poincar\'e inequality. This implies 
\begin{equation}\label{hy6}
	\|\na u\|_{L^2}\leq C \bar\rho^{1/2}\|\sqrt{\rho}\dot{u}\|_{L^2}.
\end{equation}
Then, we have by \eqref{31},  \eqref{hy60} and  \eqref{hy6}
\begin{equation*}
	\|\na u \|_{L^\infty} 
	\le
	C\|\sqrt{\rho}\dot{u}\|_{L^2},
\end{equation*}
which together with  \eqref{bdrylemma23i0}, yields 
\begin{equation}\la{hy7}
	\|\na u \|_{L^\infty}
	\le
	C\left(\|\na u\|_{L^2}^{\frac12}\|\na \dot u\|_{L^2}^{\frac12}+\|\sqrt{\rho}\dot u\|_{L^2}\right).
\end{equation}

 Finally,   we get by  \eqref{33}, \eqref{313},  \eqref{32600}, and 
 \eqref{hy7}
\begin{equation*}
	\begin{aligned}
		\int_0^T t&\|\nabla u \|_{L^\infty}^2 d t \\\leq & C \int_0^T t \|\na u\|_{L^2}\|\na \dot u\|_{L^2} d t+C \int_0^T t \|\sqrt{\rho}\dot u\|_{L^2}^2 d t \\
		\leq & C\left(\int_0^T \|\na u\|_{L^2}^2 d t\right)^{1 / 2}\left(\int_0^T t^2 \|\na \dot u\|_{L^2}^2 d t\right)^{1 / 2}  +C \int_0^T t \|\sqrt{\rho}\dot u\|_{L^2}^2d t\\\leq&C+ C\int_0^{\eta(T)} \left(\eta^2 (t)\|\na \dot u\|_{L^2}^2+\eta (t)\|\sqrt{\rho}\dot u\|_{L^2}^2\right) d t+ C\int_{\eta(T)}^T \left( e^{\alpha t}\|\na \dot u\|_{L^2}^2+e^{\alpha t}\|\sqrt{\rho}\dot u\|_{L^2}^2\right) d t\\\leq&C,
	\end{aligned}
\end{equation*}
which shows  \eqref{bdrylemma23b} directly. So, the proof of Lemma \ref{lm:bdrylemma23} is completed. \hfill $\Box$

\section{Proof of our main results}\la{4}
With  all the a priori estimates in Section \ref{3}, we are now ready to prove
Theorems \ref{tm1} and \ref{tm2}.

\subsection{Proof of Theorem \ref{tm1}}
In this subsection, we shall  construct suitable approximate solutions and then prove the global  existence and regularity of  weak solutions to \eqref{11}–\eqref{14} by passing to the limit. 

We will divide  this proof   into two steps.

{\it Step 1. Construction of approximate solutions.} For the initial data $(\rho_0, u_0)$ defined in Theorem \ref{tm1}, we can construct the following smooth initial data $(\rho_0^N(x),u_0^N(x))$, for all sufficiently large $N\in\mathbb{Z}_+$, satisfying 
\begin{equation*}\label{470}
	\begin{cases}
		(\rho_0^N(x),u_0^N(x))\in C^{\infty}(\bar{\Om})\times C^\infty_{0,\sigma}(\Om),\\	\frac{1}{N}\leq\rho^N_0\leq 2\bar{\rho},\,\, \operatorname{div} u^N_0=0,\\
		\left\|\rho_{0}^N\right\|_{L^\infty(\Om)}\leq2\bar{\rho}, \,\,\left\|u_0^N\right\|_{L^2(\Om)}\leq C \left\|u_0\right\|_{{L^2(\Om)}},\\
		\lim\limits_{\substack{N \rightarrow \infty}}\left(\left\|\rho_0^{N}-\rho_0\right\|_{L^1(\Om)}+\left\|u_0^{N}-u_0\right\|_{L^2(\Om)}\right)=0.
	\end{cases}
\end{equation*}
Here $\rho_0^N$ is obtained by nonnegative mollification of the zero extension of $\rho_0$,
followed by addition of $N^{-1}$, so that
$\rho_0^N\leq\bar{\rho}+N^{-1}\leq2\bar{\rho}$ for all sufficiently large $N$.
Then, applying the classical strong solution theory (see \cite{Ladyzhenskaya1975}),
 there exists a unique global smooth solution $(\rho^N,u^N,P^N)$ to \eqref{11}–\eqref{14}, with the initial data $(\rho_0^N,u_0^N)$. Moreover, all a priori estimates in Section \ref{3} with $(\rho, u, P)$ replaced by $(\rho^{N}, u^{N}, P^{N})$ and $\bar{\rho}$ by $2\bar{\rho}$ hold with constants and $\alpha$ independent of $N$. 

{\it Step 2. Passing to the  limit.} By taking an appropriate subsequence,   still denoted by $(\rho^N, u^N, P^N)$ and applying standard  arguments (see \cite[Section 2.3]{Lions_1996}), we   conclude that   problem \eqref{11}-\eqref{14} admits a global weak  solution  $(\rho,  u, P)$  defined on $\Om \times [0, T]$, satisfying  as $N\rightarrow\infty$,
\begin{equation}\la{412}
	\rho^{N} \rightharpoonup \rho \quad\text {weakly-$*$} \text { in } L^{\infty}\left(\Om\times(0, T)\right),
\end{equation}
\begin{equation*}\la{413}
	\rho^{N} \rightarrow \rho \quad\text {in } C\left([0, T) ; L^p\left(\Omega\right)\right), \text { for any } 1\leq p <\infty,
\end{equation*}
\begin{equation*}\la{44311110}
	\rho^N u^{N} \rightarrow {\rho} u \quad\text{in } C\left([0, T] ; L^2_w\left(\Omega\right)\right),
\end{equation*}
\begin{equation*}\label{compact-kinetic}
\sqrt{\rho^N}u^N\to\sqrt{\rho}u\quad
 \text{in }L^2\bigl(\Om\times(0, T)\bigr),
\end{equation*}
\begin{equation}\la{nap}
\left(\na^2 u^N, \na P^N\right)\rightharpoonup\left( \na^2u, \na P\right)\,
\text{weakly in }L^2(\tau, T;L^p(\Om)), \text { for any } 0<\tau<T, 2\leq p <\infty,
\end{equation}
and
\begin{equation*}\label{compact-velocity}
u^N\rightharpoonup u
\quad\text{weakly in }L^2(0, T; H^1_{0,\sigma}).
\end{equation*}

By \eqref{31}, $0\leq\rho^N\leq\bar{\rho}+N^{-1}$;
hence \eqref{412} yields $0\leq\rho\leq\bar{\rho}$.

In particular,
\[
\rho^N u^N\otimes u^N
=(\sqrt{\rho^N}u^N)\otimes(\sqrt{\rho^N}u^N)
\longrightarrow \rho u\otimes u
\quad\text{in }L^1(\Om\times(0, T)),
\]
as $N\rightarrow\infty$. Moreover, it follows from   \eqref{313},  \eqref{lm36a}, \eqref{lm36aa}, and Sobolev embedding that
 for any  $t\in[\tau, T)$,
\begin{equation}\begin{aligned}\la{ii}
		\|&\na\left(u^N\cdot\nabla u^N\right)\|_{L^2(\tau, T;L^2)}\\&\le
		C\|e^{\frac{\alpha}{2}t}\na u^Ne^{-\frac{\alpha}{2}t}\|_{L^4(\tau, T;L^4)}^2+C
		\|\nabla^2 u^N\|_{L^2(\tau, T;L^2)}\|u^N\|_{L^\infty(\tau, T;L^\infty)}\\&\le C	\|e^{\frac{\alpha}{2}t}\na u^N\|_{L^\infty(\tau, T;L^4)}^2+C
		\|\nabla^2 u^N\|_{L^2(\tau, T;L^2)}\|u^N\|_{L^\infty(\tau, T;W^{1,4})}\\&\le C.
	\end{aligned}
\end{equation}

Combining \eqref{ii},  \eqref{32600},  Poincar\'e inequality, and 
\begin{equation*}
	\begin{aligned}
\|\na u^N_t\|_{L^2(\tau, T;L^2)}\le \|\na\dot u^N\|_{L^2(\tau, T;L^2)}+\|\na(u^N\cdot\nabla u^N)\|_{L^2(\tau,T;L^2)}
	\end{aligned}
\end{equation*}
 gives \begin{equation}
 	\la{iii}\partial_tu^N \in L^2(\tau, T; H^1).
 \end{equation}   
Then, \eqref{iii},   \eqref{lm36a}, 
 \eqref{lm36aaa},
 and  the Aubin-Lions-Simon
theorem  yield \[u^N\to u\quad\text{in }L^2(\tau,T;H^1(\Omega)).\]
So,  we have by  \eqref{iii} and \eqref{lm36aaa}
\[\dot u^N\rightharpoonup \dot u
\quad\text{weakly in }L^2(\tau,  T;H^1_0(\Om)).\]

Applying  this,  \eqref{0} and  \eqref{nap}  to
 \eqref{00} yields that the momentum equation holds in the strong
sense, i.e., \[\rho\dot{u}-\Delta u+\na P=0\quad\text{in }  L^2(\tau,T;L^p(\Om)), \quad \text {for any } 2\leq p <\infty.\]

Furthermore, for every $0<\tau<T$ and $2< p<\infty$, Lemmas \ref{lemma 3.40}--\ref{lm33} give
\begin{equation}
 \eta^{\frac{1}{2}}e^{\frac{\alpha}{2}t} \na u^N \rightarrow 	 \eta^{\frac{1}{2}}e^{\frac{\alpha}{2}t} \na u \quad\text  {weakly-$*$ in } L^\infty\left(0,T; L^2\left(\Omega\right)\right),
\end{equation}
\begin{equation}
	\eta^{\frac{1}{2}} e^{\frac{\alpha}{2}t} \sqrt{\rho^N}\dot{u}^N \rightarrow  	\eta^{\frac{1}{2}} e^{\frac{\alpha}{2}t} \sqrt{\rho}\dot{u}\quad \text  {weakly  in } L^2\left(\Omega\times(0,T)\right),
\end{equation}
\begin{equation}
 \eta e^{\frac{\alpha}{2}t} \sqrt{\rho^N}\dot{u}^N \rightarrow \eta e^{\frac{\alpha}{2}t}\sqrt{\rho}\dot{u} \quad\text  {weakly-$*$ in } L^\infty\left(0,T; L^2\left(\Omega\right)\right),
\end{equation}
\begin{equation}
\eta e^{\frac{\alpha}{2}t}\na \dot u^N\rightarrow \eta e^{\frac{\alpha}{2}t}\na \dot u\quad\text  {weakly  in } L^2\left(\Omega\times(0,T)\right),
\end{equation}\begin{equation}
t^{\frac12}\na   u^N\rightarrow t^{\frac12}\na   u\quad\text  {weakly-$*$  in } L^2\left(0,T;L^\infty(\Om)\right),
\end{equation}
\begin{equation}\la{416}
 \eta e^{\frac{\alpha}{2}t}	\left(\nabla u^{N}, P^{N}\right) \rightharpoonup \eta e^{\frac{\alpha}{2}t}\left(\nabla u, P\right)
	\quad\text{weakly in }L^2\left(\tau,T;W^{1,p}(\Om)\right),
\end{equation}
and
\begin{equation}\la{417}
 \eta e^{\frac{\alpha}{2}t}	\left(\nabla u^{N}, P^{N}\right) \rightharpoonup \eta e^{\frac{\alpha}{2}t}\left(\nabla u, P\right)
	\quad\text{ weakly-$*$ in } L^{\infty}\left(\tau,T;W^{1,2}(\Om)\right).
\end{equation}

Finally,   by  \eqref{417}, and taking the  limit of  $\int_\Omega P^N(t,x)\,dx=0$ as $N\rightarrow \infty$ gives
\[
\int_\Omega P(t,x)\,dx=0
\quad\text{for a.e. }t\in(\tau,\infty).
\]
Since  $\tau$ is arbitrary, we conclude that 
\[
\int_\Omega P(t,x)\,dx=0
\quad\text{for a.e. }t>0.
\]

By a lower-semicontinuity argument and  \eqref{412}-\eqref{417}, \eqref{31}, \eqref{33},  \eqref{313},  \eqref{32600}, \eqref{lm36a},  \eqref{lm36aa}, \eqref{bdrylemma23b},  
 we obtain   that $(\rho,  u, P)$ is a global weak solution, satisfying \eqref{150}–\eqref{1500}  in Theorem \ref{tm1}. Thus, we  complete the proof of Theorem \ref{tm1}. \hfill $\Box$

\subsection{Proof of Theorem \ref{tm2}}\label{4.2} 
For the case  $0<\rho_0\in L^\infty(\Om)$  bounded away from zero and $u_0 \in L^2_\sigma$, we will  establish the  uniqueness  of weak solutions in bounded domains as in \cite[Section 5.2]{wei_2025} located in the whole space.  To this end, we consider a sequence $\{u_{0}^N \}\subset H^1\left(\Om\right)$ such that $u_{0}^N \rightarrow u_0$ in $L^2\left(\Om\right)$ as $N \rightarrow \infty$. Since $u_{0}^N$ is more regular, the global well-posedness of strong solutions $(\rho^{(N)}, u^{(N)})$  to \eqref{11} with initial data  $(\rho_0, u_{0}^N)$  has already been established in \cite{DanchinMucha2019}. Then, we prove that $\rho^{(N)} \rightarrow \rho$ and $u^{(N)} \rightarrow u$ as $N \rightarrow \infty$ in some sense. Hence, the uniqueness of weak solution follows from the uniqueness of limits.

 We first give the following key stability  lemma.

\begin{lemma}[Stability  lemma]\label{prop:stability-bdd}
For 	 constants $\underline{\rho}$, $\bar{\rho}$ 	and   $T>0$, assume that the initial data $(\rho_0, u_0, \hat{u}_0)$ satisfy
	\begin{equation}\la{underrho}
		0<\underline{\rho}\leq\rho_0\leq\bar\rho, \quad u_0\in L^2_\sigma, \quad \hat u_0\in H^1_{0,\sigma}.
	\end{equation} Let  $(\rho, u,  P)$ and $(\hat\rho,\hat u, \hat P)$ (with $\int_\Om \left(P, \hat{P}\right) dx=0$) be  weak solutions of \eqref{11}-\eqref{14} on $[0,T]\times\Omega$
	with initial data $(\rho_0,u_0)$ and  $(\rho_0,\hat u_0)$, respectively.
	 Assume in addition that
	\begin{itemize}
		\item  $(\rho,u)$ and $(\hat\rho,\hat u)$ satisfy the energy inequality \eqref{y};
		\item $\sqrt{\hat\rho}\,\hat u_t\in L^2(0,T;L^2)$;
		\item there exists a constant $C_0>0$ such that
		\begin{equation}\la{eq:strong-regularity-bdd}
			\|\nabla \hat u\|_{L^2(0,T;L^2)}
		+\|t^{1/2}\nabla \hat u\|_{L^2(0,T;L^\infty)}
		+\|t\dot{\hat u}\|_{L^\infty(0,T;L^2)}
		+\|t\nabla \dot{\hat u}\|_{L^2(0,T;L^2)}
		\le C_0,
		\end{equation}
     where  $\dot{\hat u}\triangleq {\hat u}_t+{\hat u}\cdot\na {\hat u}.$
	\end{itemize}
	Then there exists some constant $C_1$ depending only on $\underline\rho, \bar{\rho}, \Om, C_0$
	 such that
	\begin{equation}\label{eq:stability-bdd}
		\begin{aligned}
			\sup_{0\le t\le T}&\left\|\sqrt{\rho}(u-\hat u)(t)\right\|_{L^2}
			+\left\|\na (u-\hat u)\right\|_{L^2(0,T;L^2)}\\&+
			\left\|t^{-1}(\rho-\hat\rho)\right\|_{L^4(0,T;W^{-1,4})}
			\le
			C_1 \left\|\sqrt{\rho_0}(u_0-\hat u_0)\right\|_{L^2}.
		\end{aligned}
	\end{equation}
\end{lemma}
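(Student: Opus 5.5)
We plan to run a weak--strong energy argument for $\delta u\triangleq u-\hat u$, coupled with a duality estimate for $\delta\rho\triangleq\rho-\hat\rho$ obtained from the backward transport Lemma~\ref{lem:transport-bdd}. Both densities are transported by divergence-free fields with the same datum, so $\underline\rho\le\rho,\hat\rho\le\bar\rho$, and hence $\|\delta u\|_{L^2}\le\underline\rho^{-1/2}\|\sqrt\rho\,\delta u\|_{L^2}$.

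\textbf{Step 1 (energy identity for $\delta u$).} Expand $\frac12\int\rho|u-\hat u|^2=\frac12\int\rho|u|^2-\int\rho u\cdot\hat u+\frac12\int\rho|\hat u|^2$ and treat the three pieces separately.
\begin{itemize}
\item For the first piece we use the energy inequality \eqref{y} for $u$.
\item For the cross term we take $g=\hat u$ in \eqref{00}; this is admissible after a density argument, since $\sqrt{\hat\rho}\hat u_t\in L^2L^2$ and \eqref{eq:strong-regularity-bdd} hold.
\item For the last piece we take $\varphi=|\hat u|^2$ in \eqref{0}.
\end{itemize}
We then subtract the momentum equation for $\hat u$, written as $\hat\rho\dot{\hat u}-\Delta\hat u+\nabla\hat P=0$. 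This should give, for a.e.\ $t$,
\begin{equation*}
\frac12\|\sqrt\rho\,\delta u(t)\|_{L^2}^2+\int_0^t\|\nabla\delta u\|_{L^2}^2ds
\le\frac12\|\sqrt{\rho_0}\,\delta u_0\|_{L^2}^2-\int_0^t\!\!\int_\Omega\rho\,\delta u\cdot\nabla\hat u\cdot\delta u
-\int_0^t\langle\delta\rho,\dot{\hat u}\cdot\delta u\rangle ds .
\end{equation*}
The second term is handled by Ladyzhenskaya's inequality together with the lower bound on $\rho$:
\begin{equation*}
\int\rho|\delta u|^2|\nabla\hat u|\le\bar\rho\|\delta u\|_{L^4}^2\|\nabla\hat u\|_{L^2}
\le\tfrac14\|\nabla\delta u\|_{L^2}^2+C\|\nabla\hat u\|_{L^2}^2\|\sqrt\rho\,\delta u\|_{L^2}^2 .
\end{equation*}
Its Gronwall factor is $\exp(C\|\nabla\hat u\|_{L^2L^2}^2)\le e^{CC_0^2}$.

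\textbf{Step 2 (density difference).} The difference satisfies $\partial_t\delta\rho+\hat u\cdot\nabla\delta\rho=-\operatorname{div}(\rho\,\delta u)$ with $\delta\rho(0)=0$. Because $\hat u$ rather than $u$ is the transporting field, Lemma~\ref{lem:transport-bdd} is available with $\|s^{1/2}\nabla\hat u\|_{L^2L^\infty}\le C_0$. Fix $t$ and $\varphi\in W^{1,4/3}_0$, and let $\phi$ be the backward solution. Duality gives
\begin{equation*}
\langle\delta\rho(t),\varphi\rangle=\int_0^t\!\!\int_\Omega\rho\,\delta u\cdot\nabla\phi\,dxds .
\end{equation*}
Therefore
\begin{equation*}
\|\delta\rho(t)\|_{W^{-1,4}}\le C\bar\rho\int_0^t\|\delta u(s)\|_{L^4}e^{CC_0|\ln(t/s)|^{1/2}}ds .
\end{equation*}
Dividing by $t$ and applying Lemma~\ref{lem:kernel} with $f=\|\delta u\|_{L^4}$ yields
\begin{equation*}
\|t^{-1}\delta\rho\|_{L^4(0,T';W^{-1,4})}\le C\|\delta u\|_{L^4(0,T';L^4)}\le C\,\underline\rho^{-1/4}X(T')
\quad\text{for every }T'\le T,
\end{equation*}
where $X(T')\triangleq\sup_{[0,T']}\|\sqrt\rho\,\delta u\|_{L^2}+\|\nabla\delta u\|_{L^2(0,T';L^2)}$. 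The last inequality uses Gagliardo--Nirenberg.

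\textbf{Step 3 (closing).} Estimate the $\delta\rho$ term by $W^{-1,4}$--$W^{1,4/3}_0$ duality:
\begin{equation*}
|\langle\delta\rho,\dot{\hat u}\cdot\delta u\rangle|\le\|t^{-1}\delta\rho\|_{W^{-1,4}}\bigl(t\|\nabla\dot{\hat u}\|_{L^2}\|\delta u\|_{L^4}+t\|\dot{\hat u}\|_{L^4}\|\nabla\delta u\|_{L^2}\bigr).
\end{equation*}
Applying H\"older in time with exponents $(4,2,4)$ and $(4,4,2)$, and using $t\|\dot{\hat u}\|_{L^4}\le(t\|\dot{\hat u}\|_{L^2})^{1/2}(t\|\nabla\dot{\hat u}\|_{L^2})^{1/2}\in L^4_t$, bounds this contribution by $C\,\varepsilon(T')\,X(T')^2$. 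Here
\begin{equation*}
\varepsilon(T')\triangleq\|t\nabla\dot{\hat u}\|_{L^2(0,T';L^2)}+\|t\dot{\hat u}\|_{L^\infty(0,T';L^2)}^{1/2}\|t\nabla\dot{\hat u}\|_{L^2(0,T';L^2)}^{1/2}\le CC_0 .
\end{equation*}
Combining Steps 1--3 gives $X(T')^2\le C\|\sqrt{\rho_0}\delta u_0\|_{L^2}^2+C\varepsilon(T')X(T')^2$.

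\textbf{Main obstacle.} The difficulty is that $\varepsilon$ is bounded but not small, so the estimate does not absorb directly. The plan is to split $[0,T]$ into finitely many intervals $[T_k,T_{k+1}]$ on which the relevant $L^2_t$ and $L^4_t$ pieces of $t\nabla\dot{\hat u}$ are small. The number of intervals should depend only on $C_0$, via equi-partition of $\int t^2\|\nabla\dot{\hat u}\|^2\le C_0^2$ into pieces of size $\le\epsilon$. The memory of $\delta\rho$ is handled as follows: on the $k$-th interval, $\|t^{-1}\delta\rho\|_{L^4(T_k,T_{k+1})}$ is still controlled by Step 2 through the whole history $X(T_{k+1})$. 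Hence, instead of absorbing interval by interval, we prove inductively
\begin{equation*}
X(T_{k+1})^2\le C\|\sqrt{\rho_0}\delta u_0\|^2+C\sqrt\epsilon\,X(T_{k+1})^2+C X(T_k)^2 ,
\end{equation*}
where the last term comes from the cross pairing of the past part of $\delta\rho$ with the current smallness. If this step fails, we would instead insert a weight $e^{-\lambda t}$ into the energy and exploit the kernel bound. Finally, the $\delta\rho$ part of \eqref{eq:stability-bdd} follows from Step 2 once $X(T)$ is bounded, which yields \eqref{eq:stability-bdd} with $C_1=C_1(\underline\rho,\bar\rho,\Omega,C_0)$.
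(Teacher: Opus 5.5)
Your proposal is correct. Steps 1--3 reproduce the paper's argument:
\begin{itemize}
\item the same relative energy inequality with the terms $L_1,L_2$, closed by Gronwall with factor $e^{CC_0^2}$;
\item backward transport along $\hat u$ together with Lemma~\ref{lem:kernel}, which bounds $\|t^{-1}\delta\rho\|_{L^4(0,t;W^{-1,4})}$ by the \emph{full-history} quantity $X(t)$ (the paper's $D(t)$);
\item $W^{-1,4}$--$W^{1,4/3}_0$ duality, which gives $\int_\tau^t|L_2|\,ds\le C\|s\nabla\dot{\hat u}\|_{L^2(\tau,t;L^2)}^{1/2}X(t)^2$.
\end{itemize}

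Where you differ is the closing.

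The paper uses a stopping-time argument on the growth of $\int_0^t|L_2|\,ds$. It picks $t_k$ with $\int_0^{t_k}|L_2|\,ds=2^kD^2(0)$. Each doubling then forces $\|s\nabla\dot{\hat u}\|_{L^2(t_{k-1},t_k;L^2)}$ to be bounded below by a fixed constant, so $\|t\nabla\dot{\hat u}\|_{L^2(0,T;L^2)}\le C_0$ caps the number of doublings.

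You instead fix in advance $N\le C_0^2/\epsilon+1$ intervals with $\int_{T_k}^{T_{k+1}}t^2\|\nabla\dot{\hat u}\|_{L^2}^2\,dt\le\epsilon$, and induct. This closes, but the bookkeeping is simpler than you describe.
\begin{itemize}
\item The term $CX(T_k)^2$ is not a cross pairing. It is just $\int_0^{T_k}|L_2|\,ds\le C\varepsilon(T_k)X(T_k)^2\le CC_0X(T_k)^2$, from your Step 3 applied on $[0,T_k]$.
\item The current piece satisfies $\int_{T_k}^{T_{k+1}}|L_2|\,ds\le\|t^{-1}\delta\rho\|_{L^4(0,T_{k+1};W^{-1,4})}\,\|t\dot{\hat u}\cdot\delta u\|_{L^{4/3}(T_k,T_{k+1};W^{1,4/3}_0)}\le C\epsilon^{1/4}X(T_{k+1})^2$.
\item The $L^4_tL^4_x$ piece of $t\dot{\hat u}$ only yields $\epsilon^{1/4}$ rather than $\sqrt\epsilon$, which is harmless.
\end{itemize}
Absorbing is legitimate, since $X(T)<\infty$ by the two energy inequalities. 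This gives $X(T_{k+1})^2\le C\bigl(\|\sqrt{\rho_0}\delta u_0\|_{L^2}^2+X(T_k)^2\bigr)$, hence $C_1$ of size $C^{N}$. The $e^{-\lambda t}$ fallback is not needed.

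Your version is more explicit. It also covers the case $\|\sqrt{\rho_0}\delta u_0\|_{L^2}=0$ without a separate argument, whereas the paper treats $D(0)=0$ apart. The paper's version ties the number of steps directly to the growth of the bad term rather than to a prescribed partition. Both give a constant depending only on $\underline\rho,\bar\rho,\Omega,C_0$.
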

{\it Proof.}
Denote
	\[
	\rho^\delta\triangleq\rho-\hat\rho,\qquad
	u^\delta\triangleq u-\hat u,\qquad
	P^\delta\triangleq P-\hat P.
	\]
Then it is clear that  $(\rho^\delta,u^\delta)$ satisfies
	\begin{equation}\label{eq:delta-system-bdd}
		\left\{
		\begin{aligned}
			&\partial_t \rho^\delta+\div(\rho^\delta\,\hat u)+\div(\rho\,u^\delta)=0,\\
			&\partial_t(\rho u^\delta)+\div(\rho u\otimes u^\delta)-\Delta u^\delta+\nabla P^\delta
			=
			-\rho^\delta\,\dot{\hat{u}}-\rho\,u^\delta\cdot \nabla \hat u,\\
			&\div u^\delta=0,\\
			&(\rho^\delta,u^\delta)|_{t=0}=(0,u_0-\hat u_0),
		\end{aligned}
		\right.
	\end{equation}
where  $\dot{\hat u}\triangleq {\hat u}_t+{\hat u}\cdot\na {\hat u}.$
By $\eqref{11}_1$ in the renormalized sense and \eqref{underrho}, we have 
\begin{equation}\la{udrho}
\underline\rho\leq\rho(x,t),\, \hat\rho(x,t)\leq\bar\rho
\quad\text{for a.e. }(x,t)\in \Om \times [0,T],
\end{equation}
which together with $\sqrt{\hat\rho}\,\hat u_t\in L^2(0,T;L^2)$ implies $\hat u_t\in L^2(0,T;L^2)$. This and $\sqrt{\hat\rho}\hat u\in L^\infty(0,T;L^2)$ yield  $\hat u\in C([0,T];L^2)$.

Testing  \eqref{11}$_2$  against \(\hat u\) for \((\rho,u)\),  followed by the energy inequalities for \((\rho,u)\) and \((\hat\rho,\hat u)\),  and  the approximation argument as  Lemmas 3.1–3.4 and Lemma 4.1 in \cite{crin2025},  gives
	\begin{equation}\label{eq:energy-delta-1}
		\begin{aligned}
				\frac12&\|\sqrt{\rho}\,u^\delta(\cdot,t)\|_{L^2}^2
			+
			\int_0^t \|\nabla u^\delta(\cdot,s)\|_{L^2}^2\,ds
			\\&\le
			\frac12\|\sqrt{\rho_0}(u_0-\hat u_0)\|_{L^2}^2
			-\int_0^t\int_\Omega (\rho\,u^\delta\cdot \nabla \hat u)\cdot u^\delta\,dx \,ds-\int_0^t\langle \rho^\delta(\cdot,s),\,\dot{\hat{u}}(\cdot,s)\cdot u^\delta(\cdot,s)\rangle\,ds\\&\triangleq\frac12\|\sqrt{\rho_0}(u_0-\hat u_0)\|_{L^2}^2
		    + \int_0^t L_1(s)\,ds+\int_0^tL_2(s)\,ds.
		\end{aligned}
\end{equation}	

We will  estimate  terms on the right-hand side  \eqref{eq:energy-delta-1} as follows.

First,  it follows from  \eqref{udrho}  the  Gagliardo-Nirenberg inequality that
\begin{equation}
	\begin{aligned}
	\biggl|\int_0^tL_1(s)\,ds\biggr|
	&\le \bar{\rho}\int_0^t \|u^\delta(\cdot,s)\|_{L^4}^2 \|\nabla \hat u(\cdot,s)\|_{L^2}\,ds\\
	&\le
	C\bar{\rho} \int_0^t\|u^\delta(\cdot,s)\|_{L^2}\|\nabla u^\delta(\cdot,s)\|_{L^2}\|\nabla \hat u(\cdot,s)\|_{L^2}\,ds\\
	&\le
	C\bar{\rho} \underline{\rho}^{-{1}/{2}}\int_0^t\|\sqrt{\rho}\,u^\delta(\cdot,s)\|_{L^2}\|\nabla u^\delta(\cdot,s)\|_{L^2}\|\nabla \hat u(\cdot,s)\|_{L^2}\,ds\\
	&\le
	\frac12 \int_0^t\|\nabla u^\delta(\cdot,s)\|_{L^2}^2\,ds
	+
	C \int_0^t \|\sqrt{\rho}\,u^\delta(\cdot,s)\|_{L^2}^2 \|\nabla \hat u(\cdot,s)\|_{L^2}^2\,ds.
	\label{eq:II-est}
\end{aligned}
\end{equation}

Then,	inserting \eqref{eq:II-est} into \eqref{eq:energy-delta-1},  we have  by  Gronwall's inequality and \eqref{eq:strong-regularity-bdd}
	\begin{equation}\label{eq:D-basic}
		D^2(t)
		\le
		C \int_0^t |L_2(s)|\,ds + C D^2(0),
		\qquad \forall \, t\in [0,T],
	\end{equation}
	where $C>0$ depends only on $\underline\rho,\bar\rho, \Omega, C_0,$ and we set, for any $t\in[0,T]$,  
	\begin{equation}\label{eq:D-def}
		D(t)\triangleq\sup_{0\le s\le t}\|\sqrt{\rho}\,u^\delta(\cdot,s)\|_{L^2}
		+
		\|\nabla u^\delta(\cdot,s)\|_{L^2(0,t;L^2)}.
\end{equation}

To estimate $\int_0^tL_2(s)\,ds$,  we mainly employ similar idea as \cite[Section 5.2]{wei_2025}. The key point is to control the bound for   $\|t^{-1}\rho^\delta\|_{L^4(0,t; W^{-1,4})}$. Indeed, for any fixed  $t\in(0,T]$ and $\varphi\in C_0^1(\Omega)$, Lemma \ref{lem:transport-bdd} shows that there exists   a unique  solution satisfying $\phi(s,x)$ 	\begin{equation}\label{eq:back-transport0}
	\left\{
	\begin{aligned}
		&\partial_s\phi+\hat u\cdot\nabla\phi=0,
	 &&(x,s)\in\Omega\times(0,t],\\
		&\phi(x,t)=\varphi(x),&&x\in\Om,
	\end{aligned}
	\right.
\end{equation}	
and  the  estimate	\begin{equation}\label{eq:phi-est}
		\|\nabla \phi(\cdot,s)\|_{L^{4/3}}
		\le
		C e^{C|\log(t/s)|^{1/2}}\|\nabla \varphi\|_{L^{4/3}},
		\qquad \forall \,s\in 
 (0,t].
	\end{equation}

{ First test} $\eqref{eq:delta-system-bdd}_1$ by $\phi$ { on $[\varepsilon,t]$. Since the flow preserves the boundary} and { $\|\phi(\varepsilon)\|_\infty\le\|\varphi\|_\infty$}, {this} {gives}
\begingroup
\[
\langle\rho^\delta(t),\varphi\rangle
=\langle\rho^\delta(\varepsilon),\phi(\varepsilon)\rangle
 +\int_\varepsilon^t\!\int_\Om\rho u^\delta\cdot\nabla\phi.
\]
\endgroup
{ The first term tends to zero by Definition \ref{d1}(i), because the two densities have the same initial value. Moreover $\rho u^\delta\in L^4(0,t;L^4)$ by the energy bounds and the Gagliardo-Nirenberg inequality, and}
\begingroup
\[
\int_0^t e^{\frac43 C|\log(t/s)|^{1/2}}\,ds
=t\int_0^\infty e^{-r+\frac43 C\sqrt r}\,dr<\infty.
\]
\endgroup
{ Thus \eqref{eq:phi-est} and H\"older's inequality permit $\varepsilon\downarrow0$, yielding}
\begingroup 
\[
\langle\rho^\delta(t),\varphi\rangle
=\int_0^t\!\int_\Om\rho u^\delta\cdot\nabla\phi,
\]
\endgroup
{  which,} together with \eqref{eq:phi-est}, { gives}
	\[
	|\langle \rho^\delta(\cdot,t),\varphi\rangle|
	\le
	C \|\nabla \varphi\|_{L^{4/3}}
	\int_0^t \|\rho u^\delta(\cdot,s)\|_{L^4}
	e^{C|\log(t/s)|^{1/2}}\,ds.
	\]
	Thus, 
	\[
	t^{-1}\|\rho^\delta(\cdot,t)\|_{W^{-1,4}}
	\le
	C F(t),\qquad \forall t\in(0,T],
	\]	
	where
	\[
	F(t)\triangleq\frac1t \int_0^t \|\rho u^\delta(\cdot,s)\|_{L^4}
	e^{C|\log(t/s)|^{1/2}}\,ds.
	\]
Combining this with	Lemma~\ref{lem:kernel} indicates 
	\begin{equation*} 
	\|s^{-1}\rho^\delta(\cdot,s)\|_{L^4(0,t;W^{-1,4})}
	\le
	C \|\rho u^\delta\|_{L^4(0,t;L^4)}.	
	\end{equation*}
	
Moreover, it follows from  \eqref{udrho} and the Gagliardo-Nirenberg inequality that
\begin{equation}
	\begin{aligned}\la{rhodeltau}
	\|\rho u^\delta\|_{L^4(0,t;L^4)}
	&\le \bar{\rho}\|u^\delta\|_{L^4(0,t;L^4)}\\
	&\le
	C \|u^\delta\|_{L^\infty(0,t;L^2)}^{1/2}
	\|\nabla u^\delta\|_{L^2(0,t;L^2)}^{1/2}\\
	&\le
	C \|\sqrt{\rho}u^\delta\|_{L^\infty(0,t;L^2)}+
	C\|\nabla u^\delta\|_{L^2(0,t;L^2)}\\&\le CD(t).
\end{aligned}
\end{equation}	
Then  
\begin{equation}\la{1111}
	\|s^{-1}\rho^\delta(\cdot,s)\|_{L^4(0,t;W^{-1,4})}
	\le
	CD(t).	
	\end{equation}

 Since  $u^\delta|_{\partial\Omega}=0$,  
$s\dot{\hat u}\cdot u^\delta\in W^{1,4/3}_0(\Omega)$ for a.e. $s\in[0,t]$. Then,   
\eqref{eq:strong-regularity-bdd},    \eqref{rhodeltau}, and   the  Poincar\'e  inequality imply
\begin{equation}
	\begin{aligned}
		\|s&\,\dot{\hat{u}}\cdot u^\delta\|_{L^{4/3}(\tau,t;W^{1,4/3}_0)}\\
		\le&
		C\|s\nabla\dot{\hat u}\|_{L^2(\tau,t;L^2)}
		\|u^\delta\|_{L^4(\tau,t;L^4)}
		\\&+C\|s\dot{\hat u}\|_{L^4(\tau,t;L^4)}
		\|\nabla u^\delta\|_{L^2(\tau,t;L^2)}\\
		\leq& C D(t)\left(
		\|s\nabla\dot{\hat u}\|_{L^2(\tau,t;L^2)}
		+\|s\dot{\hat u}\|_{L^4(\tau,t;L^4)}
		\right)\\
	\leq	& CD(t)\left(C_0^{1/2}\|s\nabla\dot{\hat u}\|_{L^2(\tau,t;L^2)}^{1/2}+CC_0^{1/2}
		\|s\nabla\dot{\hat u}\|_{L^2(\tau,t;L^2)}^{1/2}\right)\\
	\leq 	&C(C_0)D(t)
		\|s\nabla\dot{\hat u}\|_{L^2(\tau,t;L^2)}^{1/2},
		\label{eq:prod-est-1}
\end{aligned}\end{equation}
where in the second inequality, we have used the following   fact: \[
\|s\dot{\hat u}\|_{L^4(\tau,t;L^4)}
\leq C\|s\dot{\hat u}\|_{L^\infty(\tau,t;L^2)}^{1/2}
\|s\nabla\dot{\hat u}\|_{L^2(\tau,t;L^2)}^{1/2}\leq CC_0^{1/2}
\|s\nabla\dot{\hat u}\|_{L^2(\tau,t;L^2)}^{1/2},
\] owing to  \eqref{eq:strong-regularity-bdd}.
	
We now are ready to estimate $\int_0^tL_2(s)\,ds$.

The duality between
	$W^{-1,4}(\Omega)$ and $W^{1,4/3}_0(\Omega)$, \eqref{1111}, and  \eqref{eq:prod-est-1} give
	\begin{equation}\label{eq:I-dual}
		\begin{aligned}
			\int_\tau^t |L_2(s)|\,ds
			&\le
			\|s^{-1}\rho^\delta\|_{L^4(\tau,t;W^{-1,4})}
			\,
			\|s\,\dot{\hat{u}}\cdot u^\delta\|_{L^{4/3}(\tau,t;W^{1,4/3}_0)}\\
			&\le C\|s\nabla\dot{\hat u}\|_{L^2(\tau,t;L^2)}^{1/2} D^2(t), 
		\end{aligned}
	\end{equation}
for any $0\leq\tau<t\leq T$.

  Then, choosing $\tau=0$ in \eqref{eq:I-dual}, and using  $D(T) \leq C\left\|\left(u_0, \hat{u}_0\right)\right\|_{L^2}< +\infty$, we have
   $$\int_0^{ {T}}|L_2( {s})|  {ds}<+\infty.$$
  	
  If $D(0)=0$, then by \eqref{eq:strong-regularity-bdd} and the absolute continuity of the Lebesgue integral,   there
	exists $t_0>0$ such that  in \eqref{eq:I-dual}
	\[
	C \|s\nabla \dot{\hat{u}}\|_{L^2(0, t_0;L^2)}^{1/2}\le \frac12.
	\]
	This, and  \eqref{eq:D-basic},    \eqref{eq:I-dual} give
	$D(t)\equiv 0$ on $[0,t_0]$. Repeating the same argument over  successive intervals yields	$D(t)\equiv 0$ on $[0,T]$,  and then  \eqref{eq:stability-bdd} holds. Hence,  we may assume that $D(0)>0$.	

We next claim that there exists a constant $\tilde{C}>0$, depending only on
	$\underline{\rho},\bar{\rho},\Omega,$ and $C_0$, such that
	\begin{equation}\label{eq:I-claim}
		\int_0^T |L_2(s)|\,ds \le \tilde C D^2(0).
	\end{equation}
	
    In fact, without loss of generality, we may assume that  $\int_0^T |L_2(s)|\,ds > 2 D^2(0)$. Let  
	$K\in\mathbb N$ be such that
	\begin{equation}\la{k}
		2^K \le \frac{\int_0^T |L_2(s)|\,ds}{D^2(0)}< 2^{K+1}.
	\end{equation}
	Then, we  choose an increasing sequence $\{t_k\}_{k=0}^K\subset [0,T]$ with $t_0=0$ and
	\begin{equation}\la{kk}
		\int_0^{t_k} |L_2(s)|\,ds = 2^k D^2(0),
		\qquad \forall\, 1\le k\le K,
	\end{equation}
	which implies
	\begin{equation}\label{kkk}
	\int_{t_{k-1}}^{t_k}|L_2(s)|\,ds \geq2^{k-1}D^2(0),
	\qquad 1\le k\le K.
	\end{equation}
Now, it follows from  \eqref{eq:D-basic} and \eqref{kk} that
	\[
	D^2(t_k)
	\le
	C\int_0^{t_k}|L_2(s)|\,ds + C D^2(0)
	\le
	3C\,2^k D^2(0),
	\qquad 1\le k\le K.
	\]
	Inserting this, \eqref{kkk} into \eqref{eq:I-dual} on $(t_{k-1},t_k)$, we arrive at
	\[
	2^{k-1}D^2(0)
	\le
	C D^2(t_k)\|s\nabla \dot{\hat{u}}\|_{L^2(t_{k-1},t_k;L^2)}^{1/2}
	\le
	3C^2 2^k D^2(0)\|s\nabla \dot{\hat{u}}\|_{L^2(t_{k-1},t_k;L^2)}^{1/2}.
	\]
	Hence,
	\[
	\|s\nabla \dot{\hat{u}}\|_{L^2(t_{k-1},t_k;L^2)}
	\ge
	\frac1{36 C^4},
	\qquad 1\le k\le K.
	\]
	In view of \eqref{eq:strong-regularity-bdd}, one has
	\[
	\frac{K}{(36C^4)^2}
	\le
	\sum_{k=1}^K\|s\nabla \dot{\hat{u}}\|_{L^2(t_{k-1},t_k;L^2)}^2
	\le
	\|t\nabla \dot{\hat{u}}\|_{L^2(0,T;L^2)}^2
	\le C_0^2,
	\]
	i.e., $K\le (36C^4)^2 C_0^2$. This, and \eqref{k} give \eqref{eq:I-claim}.
	
	Finally, \eqref{eq:D-basic} and \eqref{eq:I-claim} show
	\[
	D(T)^2\le C D(0)^2.
	\]
So,
	\[
	D(T)\le C_1 D(0),
	\]
	for some constant $C_1=C_1(\underline\rho,\bar\rho,\Omega,C_0)>0$. Combining this with
	\eqref{eq:D-def}, we conclude that
	\[
	\sup_{0\le t\le T}\|\sqrt{\rho}\,u^\delta(\cdot,t)\|_{L^2}
	+
	\|\nabla u^\delta(\cdot,t)\|_{L^2(0,T;L^2)}
	\le
	C_1 D(0),
	\]
	which, together with  \eqref{1111} implies \eqref{eq:stability-bdd}. The proof of Lemma \ref{prop:stability-bdd} is finished.  \hfill $\Box$
	
With Lemma \ref{prop:stability-bdd} at hand,  we  now proceed to prove the  uniqueness of weak solutions in bounded domains by approximation with strong
solutions.

 \noindent{\bf Proof of Theorem \ref{tm2}.}
 Assume  that $0<\rho_0\in L^\infty(\Om)$ is bounded away from zero and $u_0 \in L^2_\sigma$. Let $(\rho^{(1)},u^{(1)},P^{(1)})$ and $(\rho^{(2)},u^{(2)},P^{(2)})$  (with $\int_\Om(P^{(1)},P^{(2)})dx=0$) be any two global weak solutions  with the same initial data $(\rho_0,u_0)$ satisfying the energy inequality \eqref{y}. By the renormalized transport equation,
\[
0<\underline\rho\leq\rho^{(j)}\leq\bar\rho,\qquad j=1,2.
\]

Let  $\{w_k\}_{k\ge 1}$ be an orthonormal basis of $L^2_\sigma$ consisting of the eigenfunction
of the Stokes operator $L=-\mathbb{P}\Delta$ in  $H_{0,\sigma}^1$
with   Dirichlet boundary condition. We write
\[
u_0=\sum_{k=1}^\infty a_k w_k
\quad\text{in }L^2_\sigma,\qquad
a_k=(u_0,w_k)_{L^2},
\]
and 
\[
u_0^N\triangleq\sum_{k=1}^N a_k w_k.
\]
Then 
\[
u_0^N\in H^1_{0,\sigma},\quad u_0^N\to u_0 \quad\text{in }L^2,
\quad
\|u_0^N\|_{L^2}\le C \|u_0\|_{L^2}.
\]

For every $N$,  there exists a  unique
 strong solution $(\rho^{(N)},u^{(N)})$ with initial data $(\rho_0,u_0^N)$ (see \cite[Theorem 2.1]{DanchinMucha2019}).  Since  \cite[Theorem 2.1]{Lions_1996} and \cite[Theorem 2.1]{DanchinMucha2019}, it satisfies
\[
0<\underline\rho\leq\rho^{(N)}\leq\bar\rho,\qquad
\sqrt{\rho^{(N)}} u_t^{(N)}\in L^2(0,T;L^2).
\]

By virtue of a priori estimates in Section \ref{3} with $(\rho, u, P)$ replaced by $(\rho^{(N)}, u^{(N)}, P^{(N)})$, we get
		\begin{equation*}
	\|\nabla  u^{(N)}\|_{L^2(0,T;L^2)}
	+\|t^{1/2}\nabla   u^{(N)}\|_{L^2(0,T;L^\infty)}
	+\|t\dot{  u}^{(N)}\|_{L^\infty(0,T;L^2)}
	+\|t\nabla \dot{  u}^{(N)}\|_{L^2(0,T;L^2)}
	\le C_0,
\end{equation*} 
where  $C_0 > 0$  depends  only on $\underline{\rho}$, $\bar{\rho}$, $\|\sqrt{\rho_0}u_0\|_{L^2}$,  $\Om$,  
and   independent of $N$, due to the fact 
$\|\sqrt{\rho_0}u_0^N\|_{L^2}\leq C\|u_0\|_{L^2}\leq C\|\sqrt{\rho_0}u_0\|_{L^2}.$
 
Furthermore,  applying Lemma  \ref{prop:stability-bdd} to  the pairs
$(\rho^{(j)},u^{(j)})$ and $(\rho^{(N)},u^{(N)})$, $j=1,2$, 
\begin{align}
	&\sup_{0\le t\le T}\|\sqrt{\rho^{(j)}}\,(u^{(j)}-u^{(N)})(t)\|_{L^2}
	+
	\|\nabla(u^{(j)}-u^{(N)})\|_{L^2(0,T;L^2)}\notag\\
	&\hspace{4cm}
	+
	\|t^{-1}(\rho^{(j)}-\rho^{(N)})\|_{L^4(0,T;W^{-1,4})}
	\le
	C \|u_0-u_0^N\|_{L^2},
	\qquad j=1,2,
	\label{eq:approx-stab}
\end{align}
where $C>0$ is independent of $N$. Letting $N\to\infty$ in \eqref{eq:approx-stab} and by the positive lower density bound, we derive
\[
u^{(1)}=u^{(2)}
\quad\text{in }L^\infty(0,T;L^2)\cap L^2(0,T;H^1_0),
\]
and  for any $\tau\in(0,T)$,
\[
\rho^{(1)}=\rho^{(2)}
\quad\text{in }L^4(\tau,T;W^{-1,4}).
\]
This proves the uniqueness of the weak solution
$(\rho,u)$.  Finally, subtracting the two momentum equations,  gives
$\nabla(P^{(1)}-P^{(2)})=0$ in distributions.  This, combined with     $\int_\Om P^{(j)}dx=0, j=1,2$ for a.e. $t\in[0,T]$,   implies
$P^{(1)}=P^{(2)}$ a.e. $t\in\Om\times[0,T]$. So far, we finish the proof of  the  uniqueness  in Theorem \ref{tm2}. \hfill $\Box$

\appendix
\setcounter{section}{0}
\refstepcounter{section}
\section*{Appendix \thesection: Some technical lemmas}
\label{sec:technical-lemmas}
\addcontentsline{toc}{section}{Appendix \thesection: Some technical lemmas}
In this section, we compile several auxiliary results  used in Sections \ref{sec:bdry-linfty} and \ref{4.2}.

First, we give the   construction of  bounded $C^2$ smooth domains near the boundary.
\begin{lemma}\label{lm:geompatch}
	Let $\Omega\subset\mathbb R^2$ be a bounded $C^2$ domain. Then there exist
	$r_0>0$ and $C>0$, depending only on $\Omega$, such that for every $x_b\in\partial\Omega$,
	and  $0<r\le r_0$, there exists a bounded $C^2$ domain
	$D_{4r}(x_b)\subset\Omega$ satisfying
\begin{equation}\label{geompatch4}
		D_{4r}(x_b)\cap B(x_b,2r)=\Omega\cap B(x_b,2r),
	\end{equation}
	and
\begin{equation}\label{geompatch5}
		\Omega\cap B(x_b,r)\subset D_{4r}(x_b)\subset \Omega\cap B(x_b,6r).
	\end{equation}
	Moreover, the normalized 
	\begin{equation}\label{geompatch6}
		\widetilde D_4(x_b,r)\triangleq r^{-1}T_{x_b}\bigl(D_{4r}(x_b)\bigr)
	\end{equation}
    have uniformly bounded \(C^2\) boundary and uniformly bounded   measure,  with both bounds independent of \(x_b\in\partial\Omega\) and \(0<r\le r_0\),  where $T_{x_b}$ represents  a rotation  and  translation transformation. 
	In addition,
	\begin{equation}\label{geompatch-area}
		C_2r^2\leq|D_{4r}(x_b)|\leq C_3r^2,
	\end{equation}
		where the   constants $C_2$ and $C_3$ depend only on $\Omega$.
\end{lemma}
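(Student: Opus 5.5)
Since $\partial\Omega$ is compact and $C^2$, I will start from a uniform local graph representation. There are $R_*>0$ and $M>0$, depending only on $\Omega$, such that for every $x_b\in\partial\Omega$ there is a rigid motion $T_{x_b}(x)=Q_{x_b}(x-x_b)$, with $Q_{x_b}$ orthogonal, for which
\[
T_{x_b}\bigl(\Omega\cap\mathcal C_{R_*}\bigr)=\{(z_1,z_2):|z_1|<R_*,\ \gamma_{x_b}(z_1)<z_2<R_*\}.
\]
Here $\mathcal C_{R_*}$ is the corresponding cylinder around $x_b$, and $\gamma_{x_b}\in C^2$ satisfies $\gamma_{x_b}(0)=\gamma_{x_b}'(0)=0$ and $\|\gamma_{x_b}''\|_{L^\infty}\le M$. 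After rescaling by $r$, the profile $\gamma_{x_b,r}(s)\triangleq r^{-1}\gamma_{x_b}(rs)$ has $\gamma_{x_b,r}''(s)=r\gamma_{x_b}''(rs)$. So its $C^2$ norm on $|s|\le 6$ is bounded by $CM$ and in fact flattens as $r\to0$; in particular $|\gamma_{x_b,r}(s)|\le Mr s^2/2\le 18Mr$. This uniform flatness is the source of all the uniformity claimed in the statement.

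Next I will build a fixed reference construction in normalized variables and then take $D_{4r}(x_b)\triangleq T_{x_b}^{-1}\bigl(r\,\widetilde D_4(x_b,r)\bigr)$. I fix once and for all a smooth, even, concave function $h:(-a,a)\to[0,\infty)$ with $a\in(2,\sqrt{20})$ slightly larger than $2$. It will be chosen so that the curve $z_2=h(z_1)$ stays above height $2$ on $|z_1|\le 2$, makes $\{0<z_2<h(z_1)\}$ a $C^\infty$ cap-like region, and meets the axis $z_2=0$ only at $z_1=\pm a$, with vertical (infinite-slope) tangents there. The natural choice is the upper half of a smoothed stadium or ellipse contained in $B(0,\sqrt{20})\subset B(0,6)$, which contains $B(0,2)\cap\{z_2>0\}$ with room to spare. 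I then set
\[
\widetilde D_4(x_b,r)\triangleq\{(z_1,z_2):|z_1|<a,\ \gamma_{x_b,r}(z_1)<z_2<\gamma_{x_b,r}(z_1)+h(z_1)\},
\]
which is the reference region sheared vertically by $\gamma_{x_b,r}$.

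With this definition the listed properties should follow directly, provided $r_0$ is small enough that $6r_0<R_*$ and $18Mr_0$ is small.
\begin{itemize}
\item The lower boundary is exactly the rescaled $\partial\Omega$, so $\widetilde D_4\subset r^{-1}T_{x_b}(\Omega)$. Since $h\ge 2+c$ on $|z_1|\le2$ and the shift is $O(r)$, intersecting with $B(0,2)$ recovers $r^{-1}T_{x_b}(\Omega)\cap B(0,2)$. This gives \eqref{geompatch4}.
\item $B(0,1)\cap r^{-1}T_{x_b}(\Omega)\subset\widetilde D_4$, because for $|z_1|<1$ the upper graph lies well above height $1$.
\item $\widetilde D_4\subset B(0,6)$, because $h\le\sqrt{20}-c$ plus an $O(r)$ shift. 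Together with the previous item, this gives \eqref{geompatch5}.
\item Since $h$ is fixed and $|\widetilde D_4|=\int h$, we get $|\widetilde D_4|\simeq 1$ uniformly. Scaling by $r^2$ then yields \eqref{geompatch-area}.
\end{itemize}

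The main obstacle is showing that $\partial\widetilde D_4$ is globally $C^2$ with bounds uniform in $(x_b,r)$, and the danger is at the two corner points $z_1=\pm a$ where the top and bottom curves meet. My first attempt is to write the boundary near each corner as a graph over the vertical direction. Near $z_1=a$, the curve $z_2-\gamma_{x_b,r}(z_1)=h(z_1)$ is parametrized as $z_1=a-g\bigl(z_2-\gamma_{x_b,r}(z_1)\bigr)$ with $g$ smooth and even, $g(0)=0$, $g'(0)=0$, $g''(0)>0$; this even extension is possible because $h$ has a vertical tangent at $\pm a$. The same formula also represents the bottom boundary near the corner, since at the bottom $z_2-\gamma=0$. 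So the whole boundary near the corner is the level set $\Phi(z)\triangleq z_1-a+g\bigl(z_2-\gamma_{x_b,r}(z_1)\bigr)=0$. Here $\partial_{z_1}\Phi=1-g'\gamma'_{x_b,r}$ stays near $1$, because $|\gamma'_{x_b,r}|\le 6Mr$ is small. The implicit function theorem then gives a $C^2$ graph $z_1=\psi(z_2)$ whose $C^2$ norm is controlled by $\|g\|_{C^2}$ and $\|\gamma_{x_b,r}\|_{C^2}$, hence uniformly. Away from the corners the boundary is already a $C^2$ graph over $z_1$ with uniform bounds.

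Since all of this is a fixed-scale construction with perturbation parameter $O(r_0)$, the uniform $C^2$ character follows. It includes a common graph radius, which in turn supplies the $C^{1,\mathrm{Dini}}$ constants, Sobolev extension bounds and divergence right-inverse bounds used later. Finally, the $C^2$ regularity of $D_{4r}(x_b)$ itself is immediate, since it is the image of $\widetilde D_4$ under a similarity.
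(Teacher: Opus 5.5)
There is a genuine gap, and it sits exactly at the step you flagged as the main obstacle: the region you construct is not a $C^2$ domain, not even a $C^1$ one. Your $\widetilde D_4(x_b,r)$ is the image of the cap $\{|z_1|<a,\ 0<z_2<h(z_1)\}$ under the shear $(z_1,z_2)\mapsto(z_1,z_2+\gamma_{x_b,r}(z_1))$. This cap is the upper half of an ellipse or stadium, and it has right-angle corners at $(\pm a,0)$: there the bottom edge is horizontal while, by your own choice, the top curve has a vertical tangent. The shear preserves these corners, so $\partial\widetilde D_4(x_b,r)$ has corners of opening $\pi/2+O(r)$ at $(\pm a,\gamma_{x_b,r}(\pm a))$.

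The level-set argument does not remove them. With $g$ even, the set $\{z_1=a-g(z_2-\gamma_{x_b,r}(z_1))\}$ is indeed a smooth curve. But its branch with $z_2-\gamma_{x_b,r}(z_1)<0$ is the vertical reflection of the top curve across the graph of $\gamma_{x_b,r}$, which lies outside $\Omega_{x_b,r}$; it is not the bottom boundary. On the actual bottom boundary one has $z_2-\gamma_{x_b,r}(z_1)=0$ with $z_1<a$, and there your formula forces $z_1=a-g(0)=a$. So it captures only the corner point, and the true bottom boundary $\{z_2=\gamma_{x_b,r}(z_1)\}$ crosses your smooth curve transversally. More generally, a region between two graphs over $|z_1|<a$ that meet at $z_1=\pm a$ can be $C^1$ at those points only if both graphs become vertical there. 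That is impossible when the lower graph is $\gamma_{x_b,r}$ itself, with $|\gamma_{x_b,r}'|\le 6Mr$.

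This is not cosmetic: the uniform $C^2$ (hence $C^{1,\mathrm{Dini}}$) character is exactly what Lemma~\ref{thm:CK37} and Lemma~\ref{thm:Galdi-local-boundary-Stokes} require later. The remedy, which is what the paper does, is to let the reference region leave the axis tangentially instead of ending on it. Fix a $C^\infty$ domain $\mathcal Q$ with $\mathcal Q\cap B(0,3)=B(0,3)\cap\{z_2>0\}$ and $\mathcal Q\subset B(0,5)\cap\{z_2>0\}$. Its boundary contains $[-3,3]\times\{0\}$ and then lifts off smoothly with zero slope, for instance like $e^{-1/(z_1-c)}$. Then put $\widetilde D_4(x_b,r)=\Phi_{x_b,r}(\mathcal Q)$ with $\Phi_{x_b,r}(z)=(z_1,z_2+\gamma_{x_b,r}(z_1))$. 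Since $\Phi_{x_b,r}$ is a $C^2$ diffeomorphism of $\{|z_1|<6\}\times\mathbb R$ with Jacobian $1$ and $C^2$ bounds uniform in $(x_b,r)$, the uniform $C^2$ character of the image is immediate and no corner analysis is needed. Your inclusion and area arguments then carry over essentially verbatim, with the two properties of $\mathcal Q$ playing the role of your bounds on $h$.
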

\noindent{\it Proof.} Since $\Omega$ is a bounded $C^2$ domain, there exist
constants $r_*>0$ and $M>0$, depending only on $\Omega$, such that for every
$x_b\in\partial\Omega$, there exist a     rotation and translational transformation 
\[
T_{x_b}(x)\triangleq Q_{x_b}(x-x_b),
\] where $Q_{x_b}$ is a \(2\)-dimensional orthogonal matrix, 
such that the tangent line to $\partial\Om$ at $x_b$ is mapped to the
$y_1$-axis,  and a function
\(\psi_{x_b}\in C^2((-6r_*,6r_*))\) satisfying
\begin{equation}\la{iiii}
	\psi_{x_b}(0)=0,\quad \psi_{x_b}'(0)=0,\quad
	\|\psi_{x_b}\|_{C^2((-6r_*,6r_*))}\le M,
\end{equation}
and
\[
T_{x_b}\bigl(\Omega\cap B(x_b,6r_*)\bigr)
=
\{y=(y_1,y_2)\in B(0,6r_*): y_2>\psi_{x_b}(y_1)\}.
\]

	For $0<r\le r_*$, we define the rescaled boundary graph
\[
\gamma_{x_b,r}(z_1)\triangleq\frac{1}{r}\psi_{x_b}(rz_1),
\quad |z_1|<6,
\]
which, together with \eqref{iiii} and Taylor's formula, gives for $|z_1|< 6$,
\[
|\gamma_{x_b,r}(z_1)|\le 36M r,\quad
|\gamma_{x_b,r}'(z_1)|\le 6M r,\quad
|\gamma_{x_b,r}''(z_1)|\le M r.
\]
Denoting \(r_0
\triangleq
\min\left\{
r_*,
\frac{1}{72M},
1
\right\},\)
we get, for all $ x_b\in\partial\Omega,$  and $ 0<r\le r_0$,
\begin{equation}\label{gamma-small}
	\|\gamma_{x_b,r}\|_{L^\infty(-6,6)}\le \frac12.
\end{equation}
	
Straightening  out the boundary, we set 
\[
\Phi_{x_b,r}(z_1,z_2)
\triangleq
\bigl(z_1,z_2+\gamma_{x_b,r}(z_1)\bigr),
\] for $(z_1,z_2)\in B(0,6)\cap\{z_2>0\}$. 
Then $\Phi_{x_b,r}$ is a $C^2$ diffeomorphism and
\[
\Phi_{x_b,r}^{-1}(Y_1,Y_2)
=
\bigl(Y_1,Y_2-\gamma_{x_b,r}(Y_1)\bigr),
\quad
\det D\Phi_{x_b,r}\equiv 1.
\]
	Moreover, the $C^2$ norms of $\Phi_{x_b,r}$ and $\Phi_{x_b,r}^{-1}$ are bounded
uniformly with respect to $x_b$  and $r$.

	We now construct $D_{4r}(x_b)$. 
    
    First, fix  a bounded
	$C^\infty$ domain $\mathcal Q\subset\mathbb R^2$ satisfying
	\begin{equation}\label{Q-property}
		\mathcal Q\cap B(0,3)=B(0,3)\cap\{z_2>0\},
		\quad
		\mathcal Q\subset B(0,5)\cap\{z_2>0\}.
	\end{equation}
Such a domain $\mathcal Q$ follows from a standard smooth cut-off construction
for the boundary outside \(B(0,3)\). 
	
	Set
	\begin{equation}\label{Dtilde-def}
		\widetilde D_4(x_b,r)\triangleq\Phi_{x_b,r}(\mathcal Q),
	\end{equation}
	and 
	\begin{equation}\label{D-def}
		D_{4r}(x_b)\triangleq
		T_{x_b}^{-1}\bigl(r\widetilde D_4(x_b,r)\bigr),
	\end{equation}
    which implies \eqref{geompatch6}.
Note that  $\mathcal Q$ is a   bounded $C^\infty$ domain and $\Phi_{x_b,r}$ is a
	$C^2$ diffeomorphism, then     $\widetilde D_4(x_b,r)$  are uniformly 
	bounded $C^2$ domains with respect to $x_b$ and $r$.
	
	Let
	\[
	\Omega_{x_b,r}\triangleq r^{-1}T_{x_b}(\Omega).
	\]
Then
	\[
	\Omega_{x_b,r}\cap B(0,6)
	=
	\{Y=(Y_1,Y_2)\in B(0,6):Y_2>\gamma_{x_b,r}(Y_1)\}.
	\]
	
We first claim  that
	\[
	\widetilde D_4(x_b,r)\subset \Omega_{x_b,r}\cap B(0,6).
	\]
	Indeed, if $Y=\Phi_{x_b,r}(z)$ with $z\in\mathcal Q$, then 
	by $\mathcal Q\subset\{z_2>0\}$,
	\[
	Y_2=z_2+\gamma_{x_b,r}(z_1)>\gamma_{x_b,r}(Y_1),
	\]
which gives  $Y\in\Omega_{x_b,r}$. On the other hand, by
	$\mathcal Q\subset B(0,5)$ and \eqref{gamma-small}, we have
	\[
 |Y|\le|z|+|\gamma_{x_b,r}(z_1)|<5+\frac12<6.
\]
	Thus, 
	\begin{equation}\label{Dtilde-subset}
		\widetilde D_4(x_b,r)\subset\Omega_{x_b,r}\cap B(0,6).
	\end{equation}
	
	Next,  we prove 
	\begin{equation}\label{Dtilde-local-equality}
		\widetilde D_4(x_b,r)\cap B(0,2)
		=
		\Omega_{x_b,r}\cap B(0,2).
	\end{equation}
	The inclusion ``$\subset$'' follows directly from \eqref{Dtilde-subset}. Conversely,
	let $Y\in\Omega_{x_b,r}\cap B(0,2)$ and 
	\[
	z\triangleq\Phi_{x_b,r}^{-1}(Y)
	=
	\bigl(Y_1,Y_2-\gamma_{x_b,r}(Y_1)\bigr).
	\]
Since $Y\in\Omega_{x_b,r}$, we observe $z_2>0$. Moreover, 
	\eqref{gamma-small} leads to 
		\[
	|z|\le|Y|+|\gamma_{x_b,r}(Y_1)|<2+\frac12<3,
	\]
	which gives
	\[
	z\in B(0,3)\cap\{z_2>0\}
	=
	\mathcal Q\cap B(0,3).
	\]
	Hence $Y=\Phi_{x_b,r}(z)\in\widetilde D_4(x_b,r)$, which yields
	\eqref{Dtilde-local-equality}. Also, by \eqref{D-def}-\eqref{Dtilde-local-equality}, we can check \eqref{geompatch4}-\eqref{geompatch5}.
	
	Finally, since \eqref{Dtilde-def}, \eqref{D-def}, and 
 $\det D\Phi_{x_b,r}\equiv1$, 
	we have
	\[
	|D_{4r}(x_b)|
	=
	r^2|\widetilde D_4(x_b,r)|
	=
	r^2|\mathcal Q|.
	\]
	This proves \eqref{geompatch-area}. So, we finish the proof of Lemma \ref{lm:geompatch}.
 \hfill $\Box$

The next lemma establishes a generalized  Poincar\'e inequality, which remains valid even when the  function vanishes solely on a prescribed portion of  boundary.
\begin{lemma}
	\label{lm:local-poincare-normalized}
	Let
	\[
	\widetilde\Gamma_{x_b,r}
	\triangleq
	\partial\widetilde D_4(x_b,r)\cap B(0,2).
	\]
	Then there exists a constant \(C>0\), depending only on \(\Omega\) and  the fixed  domain
	\(\mathcal Q\), such that for any
	\(f\in H^1(\widetilde D_4(x_b,r))\) satisfying
	\[
	f=0
	\quad\text{on } \widetilde\Gamma_{x_b,r},
	\]
	one has
	\[
	\|f\|_{L^2(\widetilde D_4(x_b,r))}
	\le
	C\|\nabla f\|_{L^2(\widetilde D_4(x_b,r))},
	\]
where \(\mathcal Q\) and $\widetilde D_4(x_b,r)$ are defined in Lemma \ref{lm:geompatch}.\end{lemma}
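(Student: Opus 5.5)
We prove the lemma by pulling everything back to the fixed reference domain $\mathcal Q$ via the diffeomorphism $\Phi_{x_b,r}$ of \eqref{Dtilde-def}, proving a single Poincar\'e inequality on $\mathcal Q$, and then pushing it forward with constants controlled by the uniform $C^2$ bounds on $\Phi_{x_b,r}$. Given $f\in H^1(\widetilde D_4(x_b,r))$ vanishing on $\widetilde\Gamma_{x_b,r}$, set
\[
g\triangleq f\circ\Phi_{x_b,r}\in H^1(\mathcal Q).
\]
Since $\det D\Phi_{x_b,r}\equiv1$, we have $\|g\|_{L^2(\mathcal Q)}=\|f\|_{L^2(\widetilde D_4(x_b,r))}$. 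Moreover $|D\Phi_{x_b,r}|$ is bounded uniformly in $x_b$ and $r$, because $|\gamma'_{x_b,r}|\le 6Mr_0\le1$. Hence
\[
\|\nabla g\|_{L^2(\mathcal Q)}\le C\|\nabla f\|_{L^2(\widetilde D_4(x_b,r))},
\]
with $C$ an absolute constant. It therefore suffices to show $\|g\|_{L^2(\mathcal Q)}\le C(\mathcal Q)\|\nabla g\|_{L^2(\mathcal Q)}$ for all $g\in H^1(\mathcal Q)$ whose trace vanishes on a fixed boundary piece $\Sigma\subset\partial\mathcal Q$.

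The key step is identifying a fixed, $(x_b,r)$-independent set $\Sigma$ of positive length on which $g$ vanishes. Take $\Sigma\triangleq\{(z_1,0):|z_1|<1\}$. By \eqref{Q-property}, $\Sigma\subset\partial\mathcal Q$. Its image under $\Phi_{x_b,r}$ is $\{(z_1,\gamma_{x_b,r}(z_1)):|z_1|<1\}$, which lies in $B(0,2)$ by \eqref{gamma-small}. By \eqref{Dtilde-local-equality}, this image is a part of $\partial\widetilde D_4(x_b,r)\cap B(0,2)=\widetilde\Gamma_{x_b,r}$. We also need that the trace commutes with composition by a $C^1$ bi-Lipschitz map; this holds by density of smooth functions. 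Together these give $g=0$ on $\Sigma$ in the trace sense.

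On the fixed Lipschitz (indeed smooth) domain $\mathcal Q$, the inequality then follows from the standard compactness argument. Suppose $g_n\in H^1(\mathcal Q)$ satisfy $g_n|_\Sigma=0$, $\|g_n\|_{L^2}=1$ and $\|\nabla g_n\|_{L^2}\to0$. Rellich's theorem gives a subsequence $g_n\to g$ in $L^2$. Since $\nabla g_n\to0$, the convergence also holds in $H^1$, and $\nabla g=0$. As $\mathcal Q$ is connected (we may choose it so), $g$ is a constant. Continuity of the trace $H^1(\mathcal Q)\to L^2(\partial\mathcal Q)$ gives $g|_\Sigma=0$, so this constant is zero, contradicting $\|g\|_{L^2}=1$. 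The resulting constant depends only on $\mathcal Q$ and $\Sigma$.

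The main obstacle is ensuring that nothing depends on $(x_b,r)$. Both facts used above are uniform: the Jacobian bounds on $\Phi_{x_b,r}$, and the inclusion $\Phi_{x_b,r}(\Sigma)\subset\widetilde\Gamma_{x_b,r}$. Both rest only on $r\le r_0$ and \eqref{gamma-small}. Combining the three steps yields the claimed inequality, with $C$ depending only on $\Omega$ (through $M$ and $r_0$) and on $\mathcal Q$.
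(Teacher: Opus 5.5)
Your proposal is correct and follows essentially the same route as the paper. You compose with $\Phi_{x_b,r}$, observe that the fixed flat segment $\{(z_1,0):|z_1|<1\}\subset\partial\mathcal Q$ is mapped into $\widetilde\Gamma_{x_b,r}$ because $\|\gamma_{x_b,r}\|_{L^\infty}\le 1/2$, prove the Poincar\'e inequality on $\mathcal Q$ by the Rellich compactness--contradiction argument, and transfer it back using $\det D\Phi_{x_b,r}\equiv 1$ and the uniform bound on $D\Phi_{x_b,r}$; your explicit remarks on the connectedness of $\mathcal Q$ and on traces commuting with composition by $\Phi_{x_b,r}$ fill in two points the paper uses tacitly.
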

\noindent{\it Proof.}
By the construction of the domain \(\mathcal Q\) in \eqref{Q-property}, the flat segment
	\[
	\Gamma_*\triangleq\{(z_1,0): |z_1|<1\}
	\subset
\partial\mathcal Q.\]
Moreover,
	\[
	\Phi_{x_b,r}(z_1,0)
	=
	(z_1,\gamma_{x_b,r}(z_1)).
	\]
	By  \(\|\gamma_{x_b,r}\|_{L^\infty([-1,1])}\le 1/2\), we have
	\[
	|\Phi_{x_b,r}(z_1,0)|
	\le
	\sqrt{1+\frac14}
	<2 ,
	\qquad |z_1|<1.\]
Then, 
	\[
	\Phi_{x_b,r}(\Gamma_*)
	\subset
	\partial\widetilde D_4(x_b,r)\cap B(0,2)
	=
	\widetilde\Gamma_{x_b,r}.
	\]
	  Thus $\widetilde\Gamma_{x_b,r}$ contains  the   image   of the fixed segment $\Gamma_*$.
	
	Denote 
	\[
	g(z)\triangleq f(\Phi_{x_b,r}(z)),
	\quad z\in \mathcal Q,
	\] which, together with  \(f=0\) on \(\widetilde\Gamma_{x_b,r}\), gives
	\[
	g=0
	\quad\text{on } \Gamma_*.
	\]
	
	We now claim that 
	there exists \(C({\mathcal Q})>0\) such that
	\begin{equation}\la{hy8}
	\|g\|_{L^2(\mathcal Q)}
\le
C({\mathcal Q})\|\nabla g\|_{L^2(\mathcal Q)},
	\end{equation}
	for any \(g\in H^1(\mathcal Q)\) and \(g=0\) on \(\Gamma_*\).
	
	Indeed, arguing by contradiction, we see  that there exists a sequence 	$\{g_n\} \subset H^1(\mathcal Q)$ satisfying
	\[
	g_n=0\,\, \text{ on } \Gamma_*,
	\quad
	\|g_n\|_{L^2(\mathcal Q)}=1,
	\quad
	\|\nabla g_n\|_{L^2(\mathcal Q)}\to 0,\,\, \text{as}\,\, n\rightarrow \infty.
	\]
	Then   \(\{g_n\}\) is bounded in \(H^1(\mathcal Q)\).  Since \(\mathcal Q\) is a bounded   \(C^2\) domain,
	the compact embedding \(H^1(\mathcal Q)\hookrightarrow L^2(\mathcal Q)\) indicates, up to a subsequence,
	\[
	g_n\to g
	\quad\text{strongly in } L^2(\mathcal Q),
	\quad
	g_n\rightharpoonup g
	\quad\text{weakly in } H^1(\mathcal Q).
	\]
	Moreover, since \(\nabla g_n\to 0\) in \(L^2(\mathcal Q)\),  \(\nabla g=0\)   i.e. \(g\) is a constant on \(\mathcal Q\).
	
On the other hand, it follows from the continuous embedding  $H^1(\mathcal Q)\hookrightarrow
	H^{1/2}(\partial\mathcal Q)$, and \(g_n=0\) on \(\Gamma_*\) that  passing to the limit,
	\[
	g=0
	\quad\text{on } \Gamma_*, 
	\]
	which, combined with  the fact that \(g\) is constant, yields  \(g\equiv 0\).
	This contradicts
	\[
	\|g\|_{L^2(\mathcal Q)}
	=
	\lim_{n\to\infty}\|g_n\|_{L^2(\mathcal Q)}
	=
	1 .
	\]
	Hence \eqref{hy8} holds.
	
	  Since
	\[
	\Phi_{x_b,r}(z_1,z_2)
	=
	(z_1,z_2+\gamma_{x_b,r}(z_1)),
	\]
	we have
	\[
	D\Phi_{x_b,r}
	=
	\begin{pmatrix}
		1 & 0\\
		\gamma_{x_b,r}'(z_1) & 1
	\end{pmatrix},
	\quad
	\det D\Phi_{x_b,r}=1,
	\]which gives
	\begin{equation}\la{iiiii}
			\|f\|_{L^2(\widetilde D_4(x_b,r))}
		=
		\|g\|_{L^2(\mathcal Q)}.
	\end{equation}
	Moreover,  the uniform boundedness of \(\gamma_{x_b,r}\) in \(C^2\), independent of  \(x_b\) and \(r\), implies 
	\[
	\|D\Phi_{x_b,r}\|_{L^\infty(\mathcal Q)}
	+
	\|D\Phi_{x_b,r}^{-1}\|_{L^\infty(\widetilde D_4(x_b,r))}
	\le
	C(\Omega).
	\]
	This and 
	\[
	\nabla g(z)
	=
	D\Phi_{x_b,r}(z)^T
	\nabla f(\Phi_{x_b,r}(z)),
	\]
lead  to
	\[
	\|\nabla g\|_{L^2(\mathcal Q)}
	\le
	C(\Omega)
	\|\nabla f\|_{L^2(\widetilde D_4(x_b,r))},
	\]
	which, combined with  \eqref{hy8} and \eqref{iiiii}, gives
	\[
	\|f\|_{L^2(\widetilde D_4(x_b,r))}
	=
	\|g\|_{L^2(\mathcal Q)}
	\le
	C({\mathcal Q})\|\nabla g\|_{L^2(\mathcal Q)}
	\le
	C(\Omega,{\mathcal Q})
	\|\nabla f\|_{L^2(\widetilde D_4(x_b,r))}.
	\] Thus, the proof of Lemma \ref{lm:local-poincare-normalized} is completed.
 \hfill $\Box$

 \section*{Data Availability}
 
 Data sharing is not applicable to this article as no new data were created or analyzed in this study.

\end{document}